\newtheorem{theorem}{Theorem}[section]
\newtheorem{lemma}[theorem]{Lemma}
\newtheorem{proposition}[theorem]{Proposition}
\newtheorem{corollary}[theorem]{Corollary}
\newtheorem{conjecture}[theorem]{Conjecture}
\theoremstyle{remark}
\newtheorem{remark}[theorem]{Remark}
\theoremstyle{definition}
\numberwithin{equation}{section}
\renewcommand{\leq}{\leqslant}
\renewcommand{\geq}{\geqslant}
\DeclareMathOperator{\Vir}{Vir}
\DeclareMathOperator{\sgn}{sgn}
\DeclareMathOperator{\ch}{ch}
\DeclareMathOperator{\lev}{lev}
\DeclareMathOperator{\eup}{e}
\newcommand{\qbin}[2]{\genfrac{[}{]}{0pt}{}{#1}{#2}}
\newcommand{\abs}[1]{\vert#1\vert}
\newcommand{\la}{\lambda}
\newcommand{\La}{\Lambda}
\newcommand{\Par}{\mathscr{P}}
\newcommand{\qHyp}[5]{\fourIdx{}{#1}{}{#2}\phi
                      \bigg[\genfrac{}{}{0pt}{}{#3}{#4};#5\bigg]}
\newcommand{\CSA}{\mathfrak{h}}
\newcommand{\CSAa}{\hat{\mathfrak{h}}}
\newcommand{\dCSAa}{\hat{\mathfrak{h}}^{\ast}}
\newcommand{\bil}[2]{(#1\vert#2)}
\newcommand{\pairing}[2]{\langle#1,#2\rangle}
\begin{document}

\title{An $\mathrm{A}_2$ Bailey tree and $\mathrm{A}_2^{(1)}$
Rogers--Ramanujan-type identities}
\titlemark{An $\mathrm{A}_2$ Bailey tree}

\emsauthor{1}{
	\givenname{S. Ole}
	\surname{Warnaar}
	\mrid{269674}
	\orcid{0000-0002-9786-0175}}{S.~O.~Warnaar}

\Emsaffil{1}{
	\pretext{}
	\department{School of Mathematics and Physics}
	\organisation{The University of Queensland}
	\city{Brisbane}
	\zip{QLD 4072}
	\country{Australia}
	\affemail{o.warnaar@maths.uq.edu.au}}
	
\classification{05A19, 11P84, 17B10, 33D15, 81R10}

\keywords{$\mathrm{A}_2^{(1)}$ and $\mathcal{W}_3$
character formulas, Bailey's lemma, Kanade--Russell conjecture,
principal subspaces of $\mathrm{A}_2^{(1)}$,
Rogers--Ramanujan-type identities}

\begin{abstract}
The $\mathrm{A}_2$ Bailey chain of Andrews, Schilling and the
author is extended to a four-parameter $\mathrm{A}_2$ Bailey tree.
As main application of this tree, we prove the Kanade--Russell
conjecture for a three-parameter family of Rogers--Ramanujan-type
identities related to the principal characters of the affine Lie
algebra $\mathrm{A}_2^{(1)}$.
Combined with known $q$-series results, this further implies an
$\mathrm{A}_2^{(1)}$-analogue of the celebrated Andrews--Gordon
$q$-series identities.  
We also use the $\mathrm{A}_2$ Bailey tree to prove a
Rogers--Selberg-type identity for the characters of the principal
subspaces of $\mathrm{A}_2^{(1)}$ indexed by arbitrary level-$k$
dominant integral weights $\uplambda$.
This generalises a result of Feigin, Feigin, Jimbo, Miwa and
Mukhin for $\uplambda=k\La_0$.
\end{abstract}

\maketitle


\section{Introduction}
Let $(a;q)_{\infty}:=(1-a)(1-aq)\cdots$ and
$(a;q)_n:=(a;q)_{\infty}/(aq^n;q)_{\infty}$ for $n$ an integer.
In particular, $(a;q)_0=1$, $(a;q)_n=(1-a)(1-aq)\cdots(1-aq^{n-1})$
for $n>0$ and $1/(q;q)_n=0$ for $n<0$.
Further let $a,k,\tau$ be integers such that 
$k\geq 1$, $0\leq a\leq k$, $\tau\in\{0,1\}$, and fix $K:=2k+\tau+2$.
Then the modulus-$K$ Andrews--Gordon--Bressoud $q$-series identities
are given by
\begin{align}\label{Eq_AGB}
&\sum_{\la_1\geq\cdots\geq\la_k\geq 0}
\frac{q^{\la_1^2+\dots+\la_k^2+\la_{a+1}+\dots+\la_k}}
{(q;q)_{\la_1-\la_2}\cdots(q;q)_{\la_{k-1}-\la_k}
(q^{2-\tau};q^{2-\tau})_{\la_k}} \\
&\qquad\qquad=\frac{(q^{a+1};q^K)_{\infty}(q^{K-a-1};q^K)_{\infty}
(q^K;q^K)_{\infty}}{(q;q)_{\infty}} \notag,
\end{align}
where $\tau=1$ corresponds to the Andrews--Gordon or odd modulus case
\cite{Andrews74} and $\tau=0$ to the Bressoud or even modulus
case~\cite{Bressoud80}.
The Andrews--Gordon identities for $k=1$ simplify to the famous
Rogers--Ramanujan identities~\cite{Rogers94,Rogers17,RR19}
\begin{subequations}\label{Eq_RR}
\begin{align}\label{Eq_RR1}
\sum_{n=0}^{\infty}
\frac{q^{n^2}}{(1-q)(1-q^2)\cdots(1-q^n)}
=\prod_{n=0}^{\infty}\frac{1}{(1-q^{5n+1})(1-q^{5n+4})},
\shortintertext{and}
\label{Eq_RR2}
\sum_{n=0}^{\infty}
\frac{q^{n^2+n}}{(1-q)(1-q^2)\cdots(1-q^n)}
=\prod_{n=0}^{\infty}\frac{1}{(1-q^{5n+2})(1-q^{5n+3})}.
\end{align}
\end{subequations}
These identities and their generalisations due to Andrews, Gordon
and Bressoud have a rich history.
They are the analytic counterpart of well-known theorems for integer
partitions \cite{Bressoud79,Bressoud80,Gordon61,MacMahon04,Schur17},
have numerous important interpretations in terms of the representation
theory of affine Lie algebras and vertex operator algebras
\cite{DHK21,CLM06,CF13,FF93,GOW16,KP18,Lepowsky82,LW81a,LW81b,LW82,LW84,MP87,MP99,SF94},
and have arisen in a variety of other contexts such as in algebraic geometry
\cite{BMS13,Mourtada25}, combinatorics \cite{Corteel17,FW16},
commutative algebra \cite{ADJM21,BGK20,MM23}, group theory \cite{Cohen85},
knot theory \cite{AD11,Hikami03},
number theory \cite{BCFK14,MMO08}, statistical mechanics
\cite{BA86,BMcC98,W97}, the theory of orthogonal polynomials
\cite{GIS99, IS03}, and symmetric function 
theory~\cite{BW15,IJZ06,Stembridge90,RW21}. 
For a comprehensive introduction to the Rogers--Ramanujan identities
and their generalisations we refer the reader to
\textit{An invitation to the Rogers--Ramanujan identities},
by Sills~\cite{Sills18}.

The representation-theoretic interpretations of the
Andrews--Gordon--Bressoud identities based on the affine Lie
algebra $\mathrm{A}_1^{(1)}$ make it a natural problem to
try to extend \eqref{Eq_AGB} to $\mathrm{A}_{r-1}^{(1)}$.
Despite the long history of the subject, this is very much an
open problem.
In 1999 Andrews, Schilling and the author succeeded in finding (some)
analogues of \eqref{Eq_AGB} for $\mathrm{A}_2^{(1)}$ for all
moduli~\cite{ASW99}.
To succinctly describe these results, we require the modified theta
functions $\theta(z;q):=(z;q)_{\infty}(q/z;q)_{\infty}$ and
$\theta(z_1,\dots,z_n;q):=\theta(z_1;q)\cdots\theta(z_n;q)$, and the
$q$-binomial coefficients
\[
\qbin{n}{m}=\qbin{n}{m}_q:=\frac{(q;q)_n}{(q;q)_m(q;q)_{n-m}}
\]
for integers $n,m$ such that $0\leq m\leq n$ and zero otherwise.
We also need the appropriate $\mathrm{A}_2^{(1)}$-analogue of
$1/(q^{2-\tau};q^{2-\tau})_n$ (which occurs in \eqref{Eq_AGB} with
$n=\la_k$), and for $n,m$ nonnegative integers and
$\tau\in\{-1,0,1\}$, we define
\begin{equation}\label{Eq_g}
g_{n,m;\tau}(q):=\frac{q^{\tau(\tau-1)nm}}
{(q;q)_{n+m}(q^2;q)_{n+m}}\qbin{n+m}{n}_p,
\end{equation}
where $p=q$ if $\tau^2=1$ and $p=q^3$ if $\tau=0$.
Thus, in the simplest and perhaps most important case,
$g_{n,m;1}(q)=1/((q;q)_n(q;q)_m(q^2;q)_{n+m})$.
Then, for $a,k,\tau$ integers such that $k\geq 1$, $0\leq a\leq k$
and $\tau\in\{-1,0,1\}$, it was shown in \cite{ASW99} that
\begin{align}\label{Eq_ASW}
&\sum_{\substack{\la_1\geq\cdots\geq\la_k\geq 0 \\[1pt] 
\mu_1\geq\cdots\geq\mu_k\geq 0}} 
\frac{1-q^{\la_a+\mu_a+1}}{1-q}\,
\frac{q^{\sum_{i=1}^k(\la_i^2-\la_i\mu_i+\mu_i^2)+
\sum_{i=a+1}^k(\la_i+\mu_i)}}
{\prod_{i=1}^{k-1} (q;q)_{\la_i-\la_{i+1}}(q;q)_{\mu_i-\mu_{i+1}}} \,
g_{\la_k,\mu_k;\tau}(q) \\
&\qquad\qquad=\frac{(q^K;q^K)_{\infty}^2}{(q;q)_{\infty}^3}\,
\theta\big(q^{a+1},q^{a+1},q^{2a+2};q^K\big), \notag
\end{align}
where $K:=3k+\tau+3$ and $q^{\la_0}=q^{\mu_0}:=0$.
From a $q$-series as well as combinatorial point of view this is a 
perfectly good analogue of \eqref{Eq_AGB}.
For example, by the Borodin product formula \cite{Borodin07}, the
right-hand side corresponds to the generating function of cylindric
partitions \cite{GK97} of three rows with `profile' given by $(K-2a-3,a,a)$.
If, however, one wishes to interpret \eqref{Eq_ASW} as an
identity for the principal characters of $\mathrm{A}_2^{(1)}$
(characters of the principally graded subspaces of basic
$\mathrm{A}_2^{(1)}$ modules in the sense of \cite{Frenkel82,LW82})
or, for $3\nmid K$, as branching functions of $\mathrm{A}_2^{(1)}$
and characters of the $\mathcal{W}_3(3,K)$ vertex operator algebra
(see \cite[Section 4]{W23}), then one should multiply both sides
of \eqref{Eq_ASW} by $(q;q)_{\infty}$.\footnote{The result
\eqref{Eq_ASW} may be interpreted as an identity for the principally
specialised characters of $\widehat{\mathfrak{gl}(3)}$ indexed by 
$(K-2a-3)\Lambda_0+a(\Lambda_1+\Lambda_2)$ for $0\leq a\leq k$,
see e.g.,~\cite{Frenkel82,Tingley08}.
This, however, does not match the interpretation of the 
Andrews--Gordon--Bressoud identities as character identities for
the principal characters of
$\widehat{\mathfrak{sl}(2)}=\mathrm{A}_1^{(1)}$.}
This would obscure the positivity of the left-hand side,
and for this reason we will not view the above as the ``proper''
$\mathrm{A}_2^{(1)}$-analogues of the Andrews--Gordon--Bressoud
identities.
Instead we follow Kanade and Russell \cite{KR23} and refer to 
\eqref{Eq_ASW} as the Andrews--Schilling--Warnaar identities,
or ASW identities for short.
From both a representation theoretic and cylindric partition point
of view it is clear that the above set of ASW identities is
not complete, and there should be an appropriate multisum expression
for each dominant integral weight $(K-a-b-3)\La_0+a\La_1+b\La_2$ of
$\mathrm{A}_2^{(1)}$ or each cylindric-partition profile $(K-a-b-3,a,b)$,
with corresponding product form as above but with theta function
given by $\theta(q^{a+1},q^{b+1},q^{a+b+2};q^K)$.
Recently Kanade and Russell \cite[Conjecture~5.1]{KR23} (see also 
\cite{Kanade23}) posed the following beautiful conjecture that covers
all cases for which $0\leq a,b\leq k$.

\begin{conjecture}[Kanade--Russell]\label{Con_KR-ASW}
Let $a,b,k$ be integers such that $0\leq a,b\leq k$,
and let $K:=3k+\tau+3$ for $\tau\in\{-1,0,1\}$.
Then
\begin{align}\label{Eq_KR-ASW}
&\sum_{\substack{\la_1\geq\cdots\geq\la_k\geq 0 \\[1pt] 
\mu_1\geq\cdots\geq\mu_k\geq 0}}
\frac{1-q^{\la_a+\mu_b+1}}{1-q}\,
\frac{q^{\sum_{i=1}^k(\la_i^2-\la_i\mu_i+\mu_i^2)+
\sum_{i=a+1}^k\la_i+\sum_{i=b+1}^k\mu_i}}
{\prod_{i=1}^{k-1} (q;q)_{\la_i-\la_{i+1}}(q;q)_{\mu_i-\mu_{i+1}}} \,
g_{\la_k,\mu_k;\tau}(q) \\
&\qquad\qquad=\frac{(q^K;q^K)_{\infty}^2}{(q;q)_{\infty}^3}\,
\theta\big(q^{a+1},q^{b+1},q^{a+b+2};q^K\big), \notag
\end{align}
where $q^{\la_0}=q^{\mu_0}:=0$.
\end{conjecture}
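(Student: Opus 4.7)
The natural strategy is to apply the $\mathrm{A}_2$ Bailey tree constructed in the paper --- a four-parameter extension of the two-parameter $\mathrm{A}_2$ Bailey chain of \cite{ASW99} --- to a suitable seed Bailey pair and iterate $k-1$ times. The ASW chain already handles the symmetric case $a=b$ of \eqref{Eq_ASW}, in which the same base parameter governs both the $\la$ and $\mu$ sequences; the extra two parameters of the tree are what is needed to let the $a$- and $b$-shifts act independently on the two sequences, and this is precisely what the asymmetric weight $(1-q^{\la_a+\mu_b+1})/(1-q)$ demands.

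Concretely, I would begin with a level-one $\mathrm{A}_2$ seed Bailey pair carrying two base parameters $x,y$, chosen so that after specialisation the prefactor $g_{\la_k,\mu_k;\tau}(q)$ is produced; the three cases $\tau\in\{-1,0,1\}$ should correspond to three different seed pairs coming from $\mathrm{A}_2$-analogues of classical summations such as the $q$-Gauss or $_6\phi_5$ sum, exactly as in \cite{ASW99}. One then iterates the tree $k-1$ times: at each step each of $x,y$ may independently be shifted by a factor of $q$ or be left fixed, introducing one additional pair of summation variables $(\la_i,\mu_i)$ together with the quadratic weight $q^{\la_i^2-\la_i\mu_i+\mu_i^2}$ and the denominator $(q;q)_{\la_i-\la_{i+1}}(q;q)_{\mu_i-\mu_{i+1}}$. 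Shifting $x$ at exactly the steps indexed by $i>a$ and $y$ at exactly the steps indexed by $i>b$ produces the asymmetric linear terms $\sum_{i>a}\la_i$ and $\sum_{i>b}\mu_i$, while the Weyl-type prefactor $(1-q^{\la_a+\mu_b+1})/(1-q)$ emerges at the unique branching step where the two shift schedules first diverge.

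After the $k-1$ iterations one has an $\mathrm{A}_2$ Bailey pair whose $\beta$-side is precisely the LHS of \eqref{Eq_KR-ASW}. Pairing this Bailey pair against an appropriate conjugate pair then collapses the remaining summation via an $\mathrm{A}_2^{(1)}$ Macdonald/Weyl-denominator identity to produce the product $(q^K;q^K)_{\infty}^2\,\theta(q^{a+1},q^{b+1},q^{a+b+2};q^K)/(q;q)_{\infty}^3$ on the right-hand side, with $K=3k+\tau+3$ appearing automatically from the number of $q$-shifts accumulated through the iteration.

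The main obstacle is the construction and verification of the $\mathrm{A}_2$ Bailey tree itself, i.e.\ proving that the new four-parameter move preserves the $\mathrm{A}_2$ Bailey-pair relation: the ASW chain has only a two-parameter move that cannot separate the $a$- and $b$-directions, and establishing a move with two extra free parameters is the central new technical ingredient. A secondary but delicate point is matching the shift schedule $(a,b)$ to the correct seed so as to generate the factor $(1-q^{\la_a+\mu_b+1})/(1-q)$ exactly once; granted the tree, this should be mostly combinatorial bookkeeping, and the final theta-function extraction follows the template of the proof of \eqref{Eq_ASW} in \cite{ASW99}.
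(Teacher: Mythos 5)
Your overall architecture is the right one and matches the paper: a finite root identity whose right-hand side is $g_{n,m;\tau}(q)$, iterated transformations from an $\mathrm{A}_2$ Bailey tree, and a final passage to the product side via the $\mathrm{A}_2^{(1)}$ Macdonald identity \eqref{Eq_Macdonald} (the paper takes the $n,m\to\infty$ limit directly rather than pairing against a conjugate Bailey pair, but that is cosmetic). However, there are two places where your sketch misses the actual mechanism, and the first of these is precisely the hard new content of the proof.

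First, the factor $(1-q^{\la_a+\mu_b+1})/(1-q)$ does \emph{not} emerge ``at the unique branching step where the two shift schedules first diverge'' along a single path through the tree. In the paper one runs the chain $k-a$ times and part~I of the tree $a-b$ times to produce an identity $I_a$, runs the \emph{same} construction again with $a\mapsto a-1$ to produce $I_{a-1}$, and only the linear combination $(I_a-q^{m+1}I_{a-1})/(1-q)$ (via Lemma~\ref{Lem_cd1} and Corollary~\ref{Cor_cd1-y}) converts the two-parameter function $\Phi_{n,m;y}(u,v;z,w;q)$ into the four-parameter function $\Phi_{n,m;y}(u,v;1,1;z,w;q)$, which is the only object that part~II of the tree (Theorem~\ref{Thm_A2-tree-2}) can then transport $b$ more steps. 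A single path with independent shift schedules for the two base parameters, as you describe, only reaches the two-parameter deformation and therefore only proves the $b=0$ case; without the combination of two complete paths the asymmetric weight never appears in the required form. Second, your proposed seed is too optimistic: the root identity \eqref{Eq_seed} is not an $\mathrm{A}_2$-analogue of the $q$-Gauss or ${}_6\phi_5$ summations and, as the paper stresses, does not follow from the $\mathrm{A}_2$ unit Bailey pair. For $\tau=1$ it is a bounded form of the $\mathrm{A}_2$ pentagonal number theorem \eqref{Eq_page692} (reproved in the appendix from an $\mathrm{A}_{r-1}$ $q$-Pfaff--Saalsch\"utz sum), for $\tau=0$ it rests on the Gessel--Krattenthaler identity \eqref{Eq_KG}, and the $\tau=-1$ case follows by $q\mapsto 1/q$ duality. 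This input is a substantive theorem in its own right, not routine bookkeeping, and your proposal would need to supply it.
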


For $b=0$ and $\tau^2=1$ this was previously conjectured
in~\cite[Conjecture 7.4]{W23}.
By symmetry in $a$ and $b$, there are $\binom{k+2}{2}$ distinct
identities for fixed $k$, where it is noted that the right-hand
sides for $(a,b)=(k,k)$ and $(a,b)=(k,k-1)$ are the same if $\tau=-1$
due to the simple relation $\theta(z;q)=\theta(q/z;q)$.
In the following we may thus without loss of generality assume that
$a\geq b$.
For $\tau=-1$ the sum over $\mu_k$ can be carried out by a limiting
case of the $q$-Chu--Vandermonde summation (see e.g., \eqref{Eq_1phi1}
below), resulting in the slightly simpler
\begin{align*}
&\sum_{\substack{\la_1\geq\cdots\geq\la_k\geq 0 \\[1pt] 
\mu_1\geq\cdots\geq\mu_{k-1}\geq 0}}
\frac{1-q^{\la_a+\mu_b+1}}{1-q}\,
\frac{q^{\sum_{i=1}^k(\la_i^2-\la_i\mu_i+\mu_i^2)+
\sum_{i=a+1}^k\la_i+\sum_{i=b+1}^{k-1}\mu_i}}
{(q^2;q)_{\la_k+\mu_{k-1}}\prod_{i=1}^k (q;q)_{\la_i-\la_{i+1}}
\prod_{i=1}^{k-1} (q;q)_{\mu_i-\mu_{i+1}}} \\
&\qquad\qquad
=\frac{(q^{2k+2};q^{2k+2})_{\infty}^2}{(q;q)_{\infty}^3}\,
\theta\big(q^{a+1},q^{b+1},q^{a+b+2};q^{3k+2}\big), \notag
\end{align*}
where $0\leq b\leq a\leq k$ ($b\neq k$), $\la_{k+1}=\mu_k:=0$, and, 
for $k=1$, $\mu_0:=\infty$.

Besides \eqref{Eq_ASW}, also the $(a,b)=(k,0)$ and $(k-1,0)$ instances
of \eqref{Eq_KR-ASW} for $\tau^2=1$ were proved in~\cite{ASW99}.
For the moduli $5$ and $7$ this covers all identities in \eqref{Eq_KR-ASW}.
The identity of smallest modulus missing from \cite{ASW99} corresponds
to $(a,b,k,\tau)=(1,0,1,0)$ which has modulus~$6$.
Kanade and Russell proved this by solving the Corteel--Welsh system
of functional equations \cite{CW19} for cylindric partitions of profile 
$(d-a-b,a,b)$ for $d=3$, see~\cite[Corollary 7.5]{KR23}.
For the moduli $8$ and $10$ they again solved the corresponding
Corteel--Welsh systems (in these cases $d=5$ and $d=7$ respectively)
confirming the conjecture.
Alternatively, the modulus-$8$ case is implied by combining the
recent results of Corteel--Dousse--Uncu \cite{CDU22} and the author
\cite{W23} on modulus-$8$ Rogers--Ramanujan-type identities for
$\mathrm{A}_2^{(1)}$.
Finally, Uncu \cite[Theorems 4.4 \& 5.4]{Uncu23} settled the moduli 
$11$ and $13$ by algorithmically confirming and complementing a 
conjectured partial solution to the Corteel--Welsh system due to Kanade
and Russell.

The first main result of this paper is a case-free proof of the
Kanade--Russell conjecture for arbitrary modulus.

\begin{theorem}\label{Thm_KRistrue}
The Kanade--Russell conjecture holds for all moduli.
\end{theorem}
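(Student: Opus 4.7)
The plan is to prove Theorem~\ref{Thm_KRistrue} by iterating the four-parameter $\mathrm{A}_2$ Bailey tree announced in the abstract. An $\mathrm{A}_2$ Bailey pair is a pair of sequences $(\alpha_{n,m},\beta_{n,m})$ connected by an $\mathrm{A}_2$-analogue of Bailey's defining relation. The earlier construction of Andrews--Schilling--Warnaar in \cite{ASW99} really provides a Bailey \emph{chain}: a transformation that produces a new $\mathrm{A}_2$ Bailey pair from any given one, at the cost of adjoining a pair of summation indices $(\la_i,\mu_i)$ weighted by $q^{\la_i^2-\la_i\mu_i+\mu_i^2+\la_i+\mu_i}$. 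Iterating this chain $k$ times on a well-chosen seed recovers \eqref{Eq_ASW}, but the chain cannot reach \eqref{Eq_KR-ASW} because it treats $\la_i$ and $\mu_i$ symmetrically: only diagonal cases $a=b$ arise, and the asymmetric theta product $\theta(q^{a+1},q^{b+1},q^{a+b+2};q^K)$ remains out of reach.

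The key step is therefore to enlarge the chain to a tree by introducing additional Bailey-pair transformations, notably ones that adjoin a $q^{\la_i}$-tail only or a $q^{\mu_i}$-tail only, in addition to the original symmetric both-tail step and a no-tail step. Each node of the resulting tree is a Bailey pair and each path of length $k$ from the root encodes a multisum identity. Direct inspection of \eqref{Eq_KR-ASW} shows that, assuming $a\geq b$, the required path consists of $b$ no-tail steps, $a-b$ single-$\mu$-tail steps, and $k-a$ symmetric both-tail steps, for a total of $k$ steps as desired; the case $a<b$ follows by the $a\leftrightarrow b$ symmetry already noted in the text. With the tree in place the theorem then follows by: (i) selecting a seed Bailey pair encoding the parameter $\tau\in\{-1,0,1\}$ so that the factor $g_{\la_k,\mu_k;\tau}(q)$ is injected at the bottom of the tree; (ii) applying the tree along the path above; and (iii) summing the top of the tree by the Weyl--Kac / affine Macdonald denominator identity for $\mathrm{A}_2^{(1)}$ at modulus $K=3k+\tau+3$, which contributes the prefactor $(q^K;q^K)_{\infty}^2/(q;q)_{\infty}^3$ together with the theta product $\theta(q^{a+1},q^{b+1},q^{a+b+2};q^K)$.

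The main obstacle is the construction and verification of the new, asymmetric branches of the tree. Each such branch amounts to a $q$-hypergeometric identity on the $\mathrm{A}_2$ root system without counterpart in \cite{ASW99}, and proving it will most likely reduce to iterated applications of $q$-Pfaff--Saalschütz together with a Bailey-type transformation adapted to the $\mathrm{A}_2$ setting, or to a specialisation of a known transformation for $\mathrm{A}_2$ basic hypergeometric series. Once the new transformations are in place, unwinding the tree along a prescribed path and summing the seed at the top becomes essentially bookkeeping; the three values $\tau\in\{-1,0,1\}$ are absorbed uniformly by three compatible choices of seed, and the degenerate boundary cases (for example $a=k$, $b=k$, or $k=1$) fall out from elementary limits of the Bailey transformations involved.
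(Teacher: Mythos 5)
Your high-level plan coincides with the paper's actual strategy: a seed identity carrying $g_{n,m;\tau}(q)$, followed by $k-a$ ``both-tail'' steps, $a-b$ ``single-$\mu$-tail'' steps and $b$ ``no-tail'' steps, with the $\mathrm{A}_2^{(1)}$ Macdonald identity converting the resulting lattice sum into the theta product. The step count and the final summation are exactly right. However, the proposal defers precisely the two ingredients that constitute the mathematical content, and in one place it mispredicts the structure in a way that matters. The asymmetric steps do \emph{not} exist as self-reproducing transformations of a single function that can be composed freely along a path of a tree. One needs a two-parameter deformation $\Phi_{n,m}(u,v;z,w;q)$ for the $a-b$ intermediate steps and a four-parameter deformation $\Phi_{n,m}(u,v;c,d;z,w;q)$ (a signed sum over $S_3$ of shifted $\Phi$'s) for the final $b$ steps, with the deformation parameters evolving at every iteration, exactly as in the $\mathrm{A}_1$ Bailey lattice. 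More seriously, the junction between the two regimes is not an edge of a tree: since $\Phi_{n,m}(u,v;1,1;z,w;q)$ is a linear combination of $\Phi_{n,m}(u,v;z,w;q)$ and $\Phi_{n,m}(u/z,v/w;z,w;q)$ (Lemma~\ref{Lem_cd1}), one must form the combination $(I_a-q^{m+1}I_{a-1})/(1-q)$ of the identities obtained from \emph{two} distinct paths (with parameters $a$ and $a-1$) before the last $b$ steps can be taken. This linear combination is what produces the factor $(1-q^{\la_a+\mu_b+1})/(1-q)$ in \eqref{Eq_KR-ASW}; a single-path composition of standalone Bailey-pair maps would not generate it.

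The second gap is the seed. It cannot simply be ``selected'': for $\tau=1$ the root identity \eqref{Eq_seed} is a bounded $\mathrm{A}_2$ pentagonal-number-theorem-type identity which does \emph{not} arise from the $\mathrm{A}_2$ unit Bailey pair, and its proof requires an $\mathrm{A}_{r-1}$ $q$-Pfaff--Saalsch\"utz summation together with a rotation argument (for $\tau=0$ one instead needs the Gessel--Krattenthaler identity and a determinant evaluation of Krattenthaler). So your instinct that $q$-Pfaff--Saalsch\"utz is the key classical input is correct, but it is needed at the root, not for the branches; the branches themselves are verified by elementary coefficient extraction in the deformation parameters. As written, the proposal is a correct plan of attack that correctly locates where the difficulty lies, but it leaves unproved both the new transformations and the seed, and its ``tree'' picture must be corrected to accommodate the two-branch linear combination before it can be completed.
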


The three cases of smallest modulus not previously proved in the
literature are $k=2$, $\tau=0$ and $(a,b)\in\{(1,0),(2,0),(2,1)\}$.
For example, for $(a,b)=(2,0)$ the theorem confirms the modulus-$9$
identity
\begin{align*}
&\sum_{\la_1,\la_2,\mu_1,\mu_2=0}^{\infty}
\frac{q^{\la_1^2-\la_1\mu_1+\mu_1^2+\la_2^2-\la_2\mu_2+\mu_2^2+\mu_1+\mu_2}
(q^3;q^3)_{\la_2+\mu_2}}
{(q;q)_{\la_1-\la_2}(q;q)_{\mu_1-\mu_2}(q^3;q^3)_{\la_2}(q^3;q^3)_{\mu_2}
(q;q)_{\la_2+\mu_2}(q;q)_{\la_2+\mu_2+1}} \\
&\qquad\qquad=\prod_{n=1}^{\infty}
\frac{(1-q^{9n})}{(1-q^{n})^2(1-q^{9n-7})(1-q^{9n-2})},
\end{align*}
where we recall that $1/(q;q)_n=0$ if $n$ is a negative integer,
so that the summand vanishes unless $\la_1\geq\la_2$ and $\mu_1\geq\mu_2$.

As mentioned above, from a representation theoretic point of view
the ASW identities should be multiplied by a factor $(q;q)_{\infty}$.
For $\tau^2=1$ this factor can be absorbed in the multisum using a
transformation formula from~\cite{W23}.
This gives what we view as the Andrews--Gordon identities for
$\mathrm{A}_2^{(1)}$. 
In full generality this result is too involved to be stated in the
introduction and below we restrict ourselves to the special case $b=0$.
For the full result the reader is referred to Theorems~\ref{Thm_3k4} 
and \ref{Thm_3k2}.

\begin{theorem}[$\mathrm{A}_2^{(1)}$ Andrews--Gordon identities; $b=0$ case]
Let $a,k$ be integers such that $0\leq a\leq k$.
Then
\begin{subequations}
\begin{align}\label{Eq_b3kplus2}
&\sum_{\substack{\la_1,\dots,\la_k\geq 0 \\[1pt]
\mu_1,\dots,\mu_{k-1}\geq 0}}
\frac{q^{\la_k^2+\sum_{i=a+1}^k \la_i}}{(q;q)_{\la_1}} 
\prod_{i=1}^{k-1} q^{\la_i^2-\la_i\mu_i+\mu_i^2+\mu_i} 
\qbin{\la_i}{\la_{i+1}} 
\qbin{\la_i-\la_{i+1}+\mu_{i+1}}{\mu_i} \\
&\qquad\qquad=\frac{(q^K;q^K)_{\infty}^2}{(q;q)_{\infty}^2}\,
\theta\big(q,q^{a+1},q^{a+2};q^K\big), \notag
\end{align}
where $\mu_k:=2\la_k$ and $K:=3k+2$, and
\begin{align}\label{Eq_b3kplus4}
&\sum_{\substack{\la_1,\dots,\la_k\geq 0 \\[1pt] \mu_1,\dots,\mu_k\geq 0}}
\frac{q^{\sum_{i=a+1}^k \la_i}}{(q;q)_{\la_1}}
\prod_{i=1}^k q^{\la_i^2-\la_i\mu_i+\mu_i^2+\mu_i}
\qbin{\la_i}{\la_{i+1}} \qbin{\la_i-\la_{i+1}+\mu_{i+1}}{\mu_i} \\
&\qquad\qquad=\frac{(q^K;q^K)_{\infty}^2}{(q;q)_{\infty}^2}\,
\theta\big(q,q^{a+1},q^{a+2};q^K\big), \notag
\end{align}
\end{subequations}
where $\la_{k+1}:=0$, $\mu_{k+1}:=\la_k$ and $K:=3k+4$.
\end{theorem}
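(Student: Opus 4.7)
The plan is to deduce both identities from the $b=0$ case of the Kanade--Russell conjecture (Theorem~\ref{Thm_KRistrue}) by absorbing a single factor of $(q;q)_\infty$ into the left-hand side multisum, using a transformation formula from~\cite{W23}.

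Setting $b=0$ in Conjecture~\ref{Con_KR-ASW} and invoking the boundary convention $q^{\mu_0}:=0$, the prefactor $(1-q^{\la_a+\mu_b+1})/(1-q)$ collapses to $1/(1-q)$ and the linear exponent becomes $\sum_{i=a+1}^k\la_i+\sum_{i=1}^k\mu_i$. Multiplying both sides of this specialised identity by $(q;q)_\infty$ and combining with $1/(1-q)$ on the left to form $(q^2;q)_\infty$, the right-hand side becomes
$$\frac{(q^K;q^K)_\infty^2}{(q;q)_\infty^2}\,\theta\bigl(q,q^{a+1},q^{a+2};q^K\bigr),$$
which already matches the product side of both \eqref{Eq_b3kplus2} and \eqref{Eq_b3kplus4}.

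It remains to rewrite $(q^2;q)_\infty$ times the specialised Kanade--Russell multisum as the multisum appearing on the left of the target identity. This is where I would invoke the transformation formula from~\cite{W23} mentioned in the text: applied to a summand of shape $q^{Q(\la,\mu)}/\prod(q;q)_{\la_i-\la_{i+1}}(q;q)_{\mu_i-\mu_{i+1}}$ with endpoint factor $g_{\la_k,\mu_k;\tau}(q)$, it trades the prefactor $(q^2;q)_\infty$ and the $(q;q)_{\la_i-\la_{i+1}}$ denominators for the single denominator $(q;q)_{\la_1}$ together with the tower of $q$-binomials $\prod_i\qbin{\la_i}{\la_{i+1}}\qbin{\la_i-\la_{i+1}+\mu_{i+1}}{\mu_i}$, preserving the quadratic form $\sum_i(\la_i^2-\la_i\mu_i+\mu_i^2)$ and the linear terms indexed by $a$. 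For $\tau=1$ the transformation produces \eqref{Eq_b3kplus4} directly, with $\la_{k+1}=0$ and $\mu_{k+1}=\la_k$. For $\tau=-1$ the transformed multisum retains $\mu_k$, and the remaining $\mu_k$-sum is carried out in closed form by a limiting case of the $q$-Chu--Vandermonde summation~\eqref{Eq_1phi1}; this eliminates $\mu_k$, reduces the modulus from $3k+4$ to $3k+2$, and installs the effective boundary $\mu_k=2\la_k$ of \eqref{Eq_b3kplus2}.

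The main technical obstacle is the correct application of the~\cite{W23} transformation: one must verify that the quadratic exponent and linear terms survive the rewriting intact, and that the $g_{\la_k,\mu_k;\tau}(q)$ endpoint factor combines with $(q^2;q)_\infty$ to produce both the $1/(q;q)_{\la_1}$ prefactor and the rightmost $q$-binomial of the target expression. Once this matching is in place, only a routine $q$-Chu--Vandermonde reduction separates the $\tau=-1$ identity from the $\tau=1$ one, and no further analytic input is required.
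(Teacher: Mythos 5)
Your overall strategy is exactly the paper's: specialise \eqref{Eq_KR-ASW} to $b=0$ (so that $q^{\mu_0}:=0$ collapses the prefactor to $1/(1-q)$), multiply by $(q;q)_\infty$, and absorb that factor into the multisum via the transformation from \cite{W23}, which in this paper is Lemma~\ref{Lem_F-alt}. The product sides match as you say. However, the heart of the argument --- the actual application of that transformation --- is left as a black box, and your account of what it does is not correct, so as written there is a genuine gap. Lemma~\ref{Lem_F-alt} acts only on the inner $\mu$-sum: for fixed $\la$ one recognises the $\mu$-sum as $\mathcal{F}_u(q)$ with $u_i=1-\la_i$ for $i<k$, $u_k=1-\tau\la_k$ and $u_{k+1}=1+\la_k$, where the $1/(1-q)$ combines with the $(q^2;q)_{\la_k+\mu_k}$ inside $g_{\la_k,\mu_k;\tau}(q)$ to give the endpoint denominator $(q;q)_{\la_k+\mu_k+1}=(q)_{\mu_k+u_{k+1}}$; one must then check the hypothesis $u_1\leq\cdots\leq u_{k+1}$ (which holds precisely because $\la_1\geq\cdots\geq\la_k\geq 0$), after which the lemma trades the $\mu$-denominators for the binomials $\qbin{\mu_{i+1}+u_{i+1}-u_i}{\mu_i}$ at the cost of a clean factor $1/(q;q)_\infty$. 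By contrast, the passage from $\prod_i(q;q)_{\la_i-\la_{i+1}}$ to $(q;q)_{\la_1}^{-1}\prod_i\qbin{\la_i}{\la_{i+1}}$ is a trivial telescoping using $\la_{k+1}=0$ and involves no transformation at all; attributing the $1/(q;q)_{\la_1}$ and the $\la$-binomials to the \cite{W23} formula, or to the $g$-factor and $(q^2;q)_\infty$, misplaces where the nontrivial work happens.

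Your handling of the modulus-$(3k+2)$ identity is also confused. The identities \eqref{Eq_b3kplus2} and \eqref{Eq_b3kplus4} come from two \emph{independent} instances of \eqref{Eq_KR-ASW}, namely $\tau=-1$ ($K=3k+2$) and $\tau=1$ ($K=3k+4$); neither is derived from the other, and no summation ``reduces the modulus''. In the $\tau=-1$ case the disappearance of $\mu_k$ is automatic once Lemma~\ref{Lem_F-alt} has been applied: there $u_{k+1}-u_k=(1+\tau)\la_k=0$, so the last binomial is $\qbin{0}{\mu_k}=\delta_{\mu_k,0}$, the $i=k$ quadratic term degenerates to $\la_k^2$ (whence the $q^{\la_k^2}$ prefactor in \eqref{Eq_b3kplus2}), and the $i=k-1$ binomial becomes $\qbin{\la_{k-1}+\la_k}{\mu_{k-1}}$, which is exactly what the boundary convention $\mu_k:=2\la_k$ encodes. (One may instead first perform the $\mu_k$-sum in the $\tau=-1$ Kanade--Russell identity by $q$-Chu--Vandermonde, as in the introduction, but that is a preprocessing of the $\tau=-1$ identity, not a bridge between the two moduli.) To complete the proof you need to state Lemma~\ref{Lem_F-alt}, substitute these explicit $u_i$, and verify the exponent and binomial bookkeeping separately for $\tau=1$ and $\tau=-1$.
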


These results were conditionally proved in \cite{W23} assuming the
truth of \eqref{Eq_KR-ASW} for $b=0$ and $\tau^2=1$.
The $q$-series in \eqref{Eq_b3kplus2} and \eqref{Eq_b3kplus4} correspond
to the principal characters of the $\mathrm{A}_2^{(1)}$-highest weight module
$L((K-a)\Lambda_0+a\Lambda_1)$ for $K=3k+2$ and $K=3k+4$, respectively.
Alternatively, they may be recognised as the normalised 
characters of the $\mathcal{W}_3(3,K)$ vertex operator algebra 
of conformal weight $a(a+3)/K-a$.

One of the most streamlined proofs of the Andrews--Gordon--Bressoud
identities \eqref{Eq_AGB} is based on what is known as the Bailey
lattice \cite{AAB87}, which is a generalisation of the well-known
Bailey chain~\cite{Andrews84}.
Our proof of Theorem~\ref{Thm_KRistrue} presented in
Section~\ref{Sec_KR} is based on an $\mathrm{A}_2$-analogue of
a special case of the Bailey lattice which, due to its tree-like
structure, we refer to as the $\mathrm{A}_2$ Bailey tree.
A single branch of the $\mathrm{A}_2$ Bailey tree corresponds to
the $\mathrm{A}_2$ Bailey chain developed in \cite{ASW99} to prove 
the ASW identities \eqref{Eq_KR-ASW}.
Andrews' original proof of the Andrews--Gordon identities
\cite{Andrews74} predates the discoveries of the Bailey chain and
Bailey lattice, and instead is based on recursion relations for the
Rogers--Selberg function $Q_{k,i}(z;q)$ defined by~\cite{RR19,Selberg36}
\begin{equation}\label{Eq_RS}
Q_{k,i}(z;q):=\frac{1}{(zq;q)_{\infty}} 
\sum_{n=0}^{\infty} \big(1-z^iq^{(2n+1)i}\big) 
(-1)^n z^{kn} q^{(2k+1)\binom{n+1}{2}-in} \,
\frac{(zq;q)_n}{(q;q)_n},
\end{equation}
for integers $i,k$ such that $1\leq i\leq k$.
These recursions were solved by Andrews to give the multisum
representation~\cite[Equation (2.5)]{Andrews74}
\begin{equation}\label{Eq_RS-multi}
Q_{k,i}(z;q)=\sum_{\la_1\geq\cdots\geq\la_{k-1}\geq 0}
\frac{z^{\la_1+\dots+\la_{k-1}} 
q^{\la_1^2+\dots+\la_{k-1}+\la_i+\dots+\la_{k-1}}}
{(q;q)_{\la_1-\la_2}\cdots(q;q)_{\la_{k-2}-\la_{k-1}}(q;q)_{\la_{k-1}}}.
\end{equation}
Equating the two expressions for $Q_{k,i}$, specialising $z=1$ and
using the Jacobi-triple product identity yields \eqref{Eq_AGB} with
$(a,k)\mapsto (i-1,k-1)$ and $\tau=1$.
The equality of \eqref{Eq_RS} and \eqref{Eq_RS-multi} may also be
proved by the Bailey lattice, and by lifting this proof to the
$\mathrm{A}_2$-setting we obtain the following identity for the
character of the level-$k$ principal subspace $W_{\la}$ of
$\mathrm{A}_2^{(1)}$ indexed by $\la=(k-a-b)\La_0+a\La_1+b\La_2$
(see Section~\ref{Sec_PS} for details). 
Let $Q_{+}:=\{y=(y_1,y_2,y_3)\in\mathbb{Z}^3:~
y_1+y_2+y_3=0,~y_1\geq 0,~y_1+y_2\geq 0\}$.

\begin{theorem}\label{Thm_Principal-subspace}
For $a,b,k$ integers such that $0\leq a,b\leq k$, let $\nu$ be the 
strict partition $\nu:=(a+b+2,b+1,0)$.
Then
\begin{align*}
&\sum_{\substack{\la_1\geq\cdots\geq\la_k\geq 0 \\[1pt] 
\mu_1\geq\cdots\geq\mu_k\geq 0}}
\Big(1-\frac{x_1}{x_3}\,q^{\la_a+\mu_b-1}\Big)
\prod_{i=1}^k \frac{
\big(\frac{x_1}{x_2}\big)^{\la_i} 
\big(\frac{x_2}{x_3}\big)^{\mu_i} 
q^{\la_i^2-\la_i\mu_i+\mu_i^2-\chi(i\leq a)\la_i-\chi(i\leq b)\mu_i}}
{(q;q)_{\la_i-\la_{i+1}}(q;q)_{\mu_i-\mu_{i+1}}} \\
&\quad\quad = \sum_{y\in Q_{+}}
\frac{\det_{1\leq i,j\leq 3}\big((x_iq^{y_i})^{\nu_i-\nu_j}\big)}
{\prod_{1\leq i<j\leq 3} (x_i/x_j;q)_{\infty}}
\prod_{i=1}^3 \frac{x_i^{(k+2)y_i}
q^{(k+2)\binom{y_i}{2}-\nu_iy_i}(x_i/x_3;q)_{y_i}}{(qx_i/x_1;q)_{y_i}},
\end{align*}
where $q^{\la_0}=q^{\mu_0}=\la_{k+1}=\mu_{k+1}:=0$.
\end{theorem}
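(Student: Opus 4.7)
The plan is to lift the Bailey-lattice proof of the classical equivalence between \eqref{Eq_RS} and \eqref{Eq_RS-multi} to the $\mathrm{A}_2$ setting, using the $\mathrm{A}_2$ Bailey tree developed earlier in the paper as the main technical engine. In the rank-one proof one iterates the Bailey chain/lattice $k$ times starting from a unit seed pair and then evaluates the resulting $\alpha$-side in closed form via the Jacobi triple product. The same two-step scheme should work here, with the $\mathrm{A}_2$ Bailey tree playing the role of the Bailey lattice and an $\mathrm{A}_2^{(1)}$ Macdonald-type denominator formula replacing the Jacobi triple product.

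First, I would fix an $\mathrm{A}_2$ unit Bailey pair, with the two formal variables $x_1/x_2$ and $x_2/x_3$ playing the role that a single variable $z$ plays in $Q_{k,i}(z;q)$. Then I would iterate the $\mathrm{A}_2$ Bailey tree $k$ times, selecting the branch at step $i$ according to the pair of indicators $(\chi(i\leq a),\chi(i\leq b))$: these four possibilities align naturally with the four parameters of the tree. Each iteration introduces a new pair of summation indices $(\lambda_i,\mu_i)$ and multiplies the $\beta$-side by a $q$-weighted factor carrying the monomial $(x_1/x_2)^{\lambda_i}(x_2/x_3)^{\mu_i}$ together with a shift $-\chi(i\leq a)\lambda_i-\chi(i\leq b)\mu_i$ in the $q$-exponent. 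After $k$ steps the $\beta$-side reproduces the multisum on the LHS of the theorem, including the boundary factor $1-(x_1/x_3)q^{\lambda_a+\mu_b-1}$ that arises where the $a$- and $b$-directed branches meet; this coupling between the two branch directions is precisely the reason a tree rather than a single chain is required.

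Second, on the $\alpha$-side the $k$-fold iteration converts the seed sequence into a doubly indexed integer-lattice sum. Reparametrising the summation over $Q_{+}$ and invoking an $\mathrm{A}_2^{(1)}$ Macdonald denominator identity should convert this into the $Q_{+}$-sum on the RHS: the determinant $\det_{1\leq i,j\leq 3}((x_iq^{y_i})^{\nu_i-\nu_j})$ emerges as a Weyl-type numerator over the Weyl denominator $\prod_{1\leq i<j\leq 3}(x_i/x_j;q)_{\infty}$, while the strict partition $\nu=(a+b+2,b+1,0)$ appears as the cumulative shift produced by the chosen sequence of $a$ branches of one type, $b$ of another, and $k-a$ and $k-b$ of the remaining types.

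The main obstacle I expect is this second step: matching the iterated $\alpha$-side to the determinantal $Q_{+}$-sum. In rank one this reduces to a single Jacobi triple product after a telescoping collapse; here the determinantal structure, the $Q_{+}$-summation, and the $\nu$-dependent exponents must all emerge simultaneously, so the right $\mathrm{A}_2^{(1)}$ theta-function identity has to be prepared with some care, and the Pochhammer factors $(x_i/x_3;q)_{y_i}/(qx_i/x_1;q)_{y_i}$ on the RHS must be tracked through the iteration as the natural $\mathrm{A}_2$-analogue of the single $(zq;q)_n/(q;q)_n$ ratio in \eqref{Eq_RS}. A secondary, purely combinatorial obstacle is verifying that the chosen branch sequence places the prefactor $1-(x_1/x_3)q^{\lambda_a+\mu_b-1}$ at precisely the indices $(a,b)$ and yields the $\nu_i$ exponents dictated by $(a+b+2,b+1,0)$; this is bookkeeping, but it is delicate because the four types of branches interact non-trivially at the two transition points $i=a$ and $i=b$ where the indicator functions drop from $1$ to $0$.
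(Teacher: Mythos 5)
Your overall architecture --- seed with the $\mathrm{A}_2$ unit Bailey pair, iterate the $\mathrm{A}_2$ Bailey tree $k+1$ times, then pass to the large-index limit --- is the same as the paper's, but the two steps you single out as the crux are mis-planned, and the actual key mechanism is absent from your sketch.

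First, the factor $1-(x_1/x_3)q^{\la_a+\mu_b-1}$ does not arise from ``branch selection at step $i$ according to $(\chi(i\leq a),\chi(i\leq b))$''. The iteration is rigidly ordered: $(k-a+1)$ chain steps, then $(a-b)$ applications of part I of the tree \eqref{Eq_Bailey-A2-uv-Phi}, and only then --- before part II can be applied at all --- one must form the explicit linear combination $(I_a-zwq^{m-1}I_{a-1})/(1-zwq^{-1})$ of the identity just obtained with the one obtained by running the same $k-b+1$ steps with $a\mapsto a-1$. This is Corollary~\ref{Cor_cd1-y}; it is what produces the prefactor and simultaneously converts the two-parameter function $\Phi_{n,m;y}(u,v;z,w;q)$ into the four-parameter function \eqref{Eq_Phiuvcd} on which part II of the tree (Theorem~\ref{Thm_A2-tree-2}) acts. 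Without this combination of two separately iterated identities --- the reason the structure is not literally a tree --- the final $b$ steps cannot be taken, so this is a genuine missing idea rather than bookkeeping.

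Second, no $\mathrm{A}_2^{(1)}$ Macdonald identity is needed, and hunting for one would lead you astray: that identity is used in the proof of the Kanade--Russell conjecture to turn a $Q$-sum into a \emph{product}, whereas here the right-hand side is not a product but a $Q_{+}$-sum, and that sum is already present in the seed \eqref{Eq_unit-BP} (Gustafson's ${}_6\psi_6$ in disguise). Each iteration merely multiplies the coefficient $\Psi_y$ by monomials in $z$, $w$ and $q$; in particular the ratio $(x_i/x_3;q)_{y_i}/(qx_i/x_1;q)_{y_i}$ sits inside $\Psi_y$ from the outset rather than being generated by the iteration. The determinant over the Weyl denominator is likewise not a theta-function phenomenon: it is the elementary large-$n,m$ limit of the four-parameter function, Lemma~\ref{Lem_nmlimit} and \eqref{Eq_nmlimit-y}, in which the sum over $S_3$ defining \eqref{Eq_Phiuvcd} collapses to $\det_{1\leq i,j\leq 3}\big((x_iq^{y_i})^{\nu_i-\nu_j}\big)$. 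So the ``main obstacle'' you identify is in fact immediate once the correct objects are in place, while the genuinely delicate step --- the linear-combination bridge between parts I and II of the tree --- is the one your proposal leaves unexplained.
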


Setting $(x_1,x_2,x_3)=(zw,w,1)$ and letting $w$ tend to $0$, the summand
on the left vanishes unless $\mu_1=\dots=\mu_k=0$, resulting in 
$Q_{k+1,a+1}(z/q;q)$ in its multisum representation \eqref{Eq_RS-multi}.
In this same limit the summand on the right vanishes unless 
$(y_1,y_2,y_3)\in Q_{+}$ is of the form $(n,-n,0)$ for $n\in\mathbb{N}_0$.
After some simplifications this yields $Q_{k+1,a+1}(z/q;q)$ as defined
in~\eqref{Eq_RS}.
In contrast to the $\mathrm{A}_1^{(1)}$ case, \eqref{Eq_KR-ASW} does not
follow from Theorem~\ref{Thm_Principal-subspace} by specialisation of the
$x_i$.
For $b=a$ the determinant on the right (which up to normalisation is a
Schur function \cite{Macdonald95}) factorises, resulting in the simpler
\begin{align*}
&\sum_{\substack{\la_1\geq\cdots\geq\la_k\geq 0 \\[1pt] 
\mu_1\geq\cdots\geq\mu_k\geq 0}}
\Big(1-\frac{x_1}{x_3}\,q^{\la_a+\mu_a-1}\Big)
\prod_{i=1}^k \frac{
\big(\frac{x_1}{x_2}\big)^{\la_i} 
\big(\frac{x_2}{x_3}\big)^{\mu_i} 
q^{\la_i^2-\la_i\mu_i+\mu_i^2-\chi(i\leq a)(\la_i+\mu_i)}}
{(q;q)_{\la_i-\la_{i+1}}(q;q)_{\mu_i-\mu_{i+1}}} \\
&\quad = \sum_{y\in Q_{+}} \bigg(\prod_{1\leq i<j\leq 3}
\frac{1-\big(q^{y_i-y_j}x_i/x_j\big)^{a+1}}
{(x_i/x_j;q)_{\infty}} 
\prod_{i=1}^3 \frac{x_i^{(k+2)y_i} 
q^{(k+2)\binom{y_i}{2}+(a+1)iy_i}(x_i/x_3;q)_{y_i}}
{(qx_i/x_1;q)_{y_i}}\bigg).
\end{align*}
For $a=0$ this is \cite[Corollary 7.8]{FFJMM09} by Feigin et al.
The large-$k$ limit of Theorem~\ref{Thm_Principal-subspace} gives
our next result, where $\Par$ denotes the set of integer partitions.

\begin{corollary}\label{Cor_Hua}
For $a,b$ nonnegative integers and $\nu:=(a+b+2,b+1,0)$,
\begin{align*}
&\sum_{\la,\mu\in\Par}
\Big(1-\frac{x_1}{x_3}\,q^{\la_a+\mu_b-1}\Big)
\prod_{i\geq 1} \frac{
\big(\frac{x_1}{x_2}\big)^{\la_i} 
\big(\frac{x_2}{x_3}\big)^{\mu_i} 
q^{\la_i^2-\la_i\mu_i+\mu_i^2-\chi(i\leq a)\la_i-\chi(i\leq b)\mu_i}}
{(q;q)_{\la_i-\la_{i+1}}(q;q)_{\mu_i-\mu_{i+1}}} \\
&\quad\quad = \frac{1}{\prod_{1\leq i<j\leq 3} (x_i/x_j;q)_{\infty}}
\det_{1\leq i,j\leq 3}\big(x_i^{\nu_i-\nu_j}\big),
\end{align*}
where $q^{\la_0}=q^{\mu_0}:=0$.
\end{corollary}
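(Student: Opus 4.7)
\noindent The plan is to deduce Corollary~\ref{Cor_Hua} from Theorem~\ref{Thm_Principal-subspace} by taking the limit $k\to\infty$ on both sides. I would interpret the identity as one between formal power series in $q$ with coefficients in an appropriate ring of rational functions of $x_1,x_2,x_3$, so that ``limit'' means term-by-term convergence of $q$-coefficients.

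\emph{Left-hand side.} The summand contains the factor $q^{\sum_i(\la_i^2-\la_i\mu_i+\mu_i^2)}$, and the Cartan quadratic form $u^2-uv+v^2=\tfrac{1}{2}\bigl(u^2+v^2+(u-v)^2\bigr)$ is positive definite and integer-valued on $\mathbb{Z}^2$. Consequently, for each fixed power of $q$ only finitely many $(\la,\mu)$ contribute, and the restriction $\ell(\la),\ell(\mu)\leq k$ ceases to play any role once $k$ is large. The limit of the left-hand side of Theorem~\ref{Thm_Principal-subspace} is therefore exactly the unrestricted double sum over $\la,\mu\in\Par$ on the left of Corollary~\ref{Cor_Hua}.

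\emph{Right-hand side.} The key point is that the $y$-summand carries the factor
\[
\prod_{i=1}^3 x_i^{(k+2)y_i}\,q^{(k+2)\binom{y_i}{2}},
\]
and one has $\sum_{i=1}^3\binom{y_i}{2}\geq 1$ for every $y\in Q_{+}\setminus\{(0,0,0)\}$. Indeed $\binom{n}{2}\geq 0$ with equality iff $n\in\{0,1\}$, but the defining constraints of $Q_{+}$ (namely $y_1\geq 0$, $y_1+y_2\geq 0$ and $y_1+y_2+y_3=0$) force any $y\in Q_{+}$ whose components all lie in $\{0,1\}$ to be the zero vector. Hence for every nonzero $y\in Q_{+}$ the $y$-term has $q$-order at least $k+2$, up to the contribution of the accompanying ratio $\prod_{i=1}^3(x_i/x_3;q)_{y_i}/(qx_i/x_1;q)_{y_i}$. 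Letting $k\to\infty$ kills every nonzero $y$, and the $y=(0,0,0)$ term reduces (all Pochhammer factors being $1$ and $q^{y_i}=1$ throughout the determinant) to
\[
\frac{\det_{1\leq i,j\leq 3}\bigl(x_i^{\nu_i-\nu_j}\bigr)}{\prod_{1\leq i<j\leq 3}(x_i/x_j;q)_{\infty}},
\]
matching the right-hand side of Corollary~\ref{Cor_Hua}.

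\emph{Main obstacle.} The only genuinely delicate step is ensuring that the Pochhammer ratios $(x_i/x_3;q)_{y_i}/(qx_i/x_1;q)_{y_i}$ do not produce negative $q$-powers that could cancel the $q^{(k+2)\sum_i\binom{y_i}{2}}$ suppression. This is handled by a short case-by-case inspection using that $y_1\geq 0$ and $y_3=-y_1-y_2\leq 0$, showing that each such ratio expands as a power (or Laurent) series in $q$ whose $q$-order remains bounded uniformly in $k$. With this bookkeeping in place the term-by-term limit is immediate and yields the corollary.
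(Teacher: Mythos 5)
Your proposal is correct and is exactly the argument the paper intends: the corollary is obtained as the large-$k$ limit of Theorem~\ref{Thm_Principal-subspace}, with the left-hand side stabilising because the positive-definite form $\la_i^2-\la_i\mu_i+\mu_i^2$ makes each $q$-coefficient a finite sum, and the right-hand side collapsing to the $y=0$ term because $\sum_{i=1}^3\binom{y_i}{2}\geq 1$ for every nonzero $y\in Q_{+}$, so the factor $q^{(k+2)\sum_i\binom{y_i}{2}}$ dominates the ($k$-independent, at most quadratically growing) negative $q$-powers coming from the Pochhammer ratios. The paper states this limit without further detail, so your write-up in fact supplies more justification than the original.
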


For $a=b$ the right-hand side simplifies to
\[
\prod_{1\leq i<j\leq 3}\frac{1-(x_i/x_j)^{a+1}}{1-x_i/x_j} \,
\frac{1}{(qx_i/x_j;q)_{\infty}}
\]
so that the $a=b=0$ case of Corollary~\ref{Cor_Hua} gives the
$\mathrm{A}_2$ instance of Hua's combinatorial identity for
quivers of arbitrary finite type, see \cite[Theorem 4.9]{Hua00}
and the minor correction pointed out in~\cite{Fulman01}.
The determinant in Theorem~\ref{Thm_Principal-subspace} also simplifies
for $(x_1,x_2,x_3)=(z^2,z,1)$, resulting in 
\begin{align*}
&\sum_{\la,\mu\in\Par} \big(1-z^2\,q^{\la_a+\mu_b-1}\big)
\prod_{i\geq 1} \frac{z^{\la_i+\mu_i} 
q^{\la_i^2-\la_i\mu_i+\mu_i^2-\chi(i\leq a)\la_i-\chi(i\leq b)\mu_i}}
{(q;q)_{\la_i-\la_{i+1}}(q;q)_{\mu_i-\mu_{i+1}}} \\
&\quad\quad = \frac{1}{(zq,zq,z^2q;q)_{\infty}}\:
\frac{(1-z^{a+1})(1-z^{b+1})(1-z^{a+b+2})}{(1-z)(1-z)(1-z^2)},
\end{align*}
where $(a_1,\dots,a_k;q)_{\infty}:=(a_1;q)_{\infty}\cdots(a_k;q)_{\infty}$.
For $z=q$ this proves another conjecture by Kanade and Russell, stated as
Conjecture~{3.1} in \cite{KR23}.

\bigskip

The rest of this paper is organised as follows.
In Section~\ref{Sec_Prelims} we recall some standard material
from the theory of $q$-series, root systems and symmetric functions
that is used throughout the paper.
Then, in Section~\ref{Sec_A1-tree}, we review the classical or
$\mathrm{A}_1$ Bailey chain and a special case of the Bailey lattice
which in this paper will be referred to as the $\mathrm{A}_1$ Bailey tree.
Although all of the material in this section is essentially known,
some results are formulated in a form that is new.
In particular, the Bailey tree will be recast as a one-parameter
deformation of the Bailey chain.
In Section~\ref{Sec_A2-tree} the $\mathrm{A}_2$ Bailey chain of 
\cite{ASW99} is generalised to an $\mathrm{A}_2$ Bailey tree.
The simplest part of this tree consists of a two-parameter 
deformation of the $\mathrm{A}_2$ Bailey chain, analogous to the
one-parameter deformation described in Section~\ref{Sec_A1-tree}.
As it turns out, this two-parameter Bailey tree can only prove the
Kanade--Russell conjecture for $b=0$, and to obtain the full set of
identities we develop an additional and more complicated
four-parameter deformation of the $\mathrm{A}_2$ Bailey chain.
In Section~\ref{Sec_KR} we apply the $\mathrm{A}_2$ Bailey tree
to a suitable root identity to prove Theorem~\ref{Thm_KRistrue}.
As mentioned just above Conjecture~\ref{Con_KR-ASW}, there should
be an ASW identity for each dominant integral weight
$(K-a-b-3)\Lambda_0+a\Lambda_1+b\Lambda_2$ of $\mathrm{A}_2^{(1)}$,
and in Theorem~\ref{Thm_Belowtheline} of Section~\ref{Sec_Belowtheline}
the missing cases for $\tau=0$ are obtained using a key observation
due to Kanade and Russell.
Then, in Section~\ref{Sec_AG}, we prove the $\mathrm{A}_2^{(1)}$-analogues
of the Andrews--Gordon identities, stated in Theorems~\ref{Thm_3k4} and 
\ref{Thm_3k2}.
In Section~\ref{Sec_PS} we give a short introduction to the principal
subspaces of $\mathrm{A}_{r-1}^{(1)}$ in the sense of Feigin and
Stoyanovsky, and then apply the $\mathrm{A}_2$ Bailey tree to prove
Theorem~\ref{Thm_Principal-subspace}.
Finally, in Section~\ref{Sec_Outlook} we discuss the prospects of
an $\mathrm{A}_{r-1}$ Bailey tree and a generalisation of 
\eqref{Eq_KR-ASW} to arbitrary rank $r$.


\section{Preliminaries}\label{Sec_Prelims}

A partition $\la=(\la_1,\la_2,\dots)$ is a sequence of weakly
decreasing integers such that $\abs{\la}:=\la_1+\la_2+\cdots$ is finite.
We will follow the convention to omit the infinite string of zeros in
a partition, writing $(4,3,2,2,1)$ instead of $(4,3,2,2,1,0,\dots)$.
If $\lambda$ is a partition such that $\abs{\la}=n$, we say that $\la$
is a partition of $n$ and write $\la\vdash n$.
The set of all partitions, including the unique partition of $0$, is denoted
by $\Par$.
The length $l(\la)$ of a partition $\la$ is defined as the
number of positive $\la_i$.
A rectangular partition is a partition $\la$ such that 
$\la_1=\dots=\la_r=m$ for some positive integer $m$ and $\la_{r+1}=0$.
We will typically denote such a $\la$ by $(m^r)$.
The partition $\mu$ is said to be contained in the partition $\la$,
denoted $\mu\subseteq\la$ if $\mu_i\leq\la_i$ for all $i\geq 1$.

Many of the identities in this paper involve a sum over the root lattice
$Q$ of $\mathrm{A}_{r-1}$ or a subset thereof, mostly for $r=3$.
It will be convenient to employ the standard embedding of this lattice
in $\mathbb{Z}^r$, and we set
\begin{subequations}
\begin{align}\label{Eq_Q}
Q&:=\{(y_1,y_2,\dots,y_r)\in\mathbb Z^r: y_1+y_2+\dots+y_r=0\}, \\
Q_{+}&:=\{(y_1,y_2,\dots,y_r)\in Q: y_1+\dots+y_i\geq 0 
\text{ for all $1\leq i\leq r$}\}, \\
Q_{++}&:=\{(y_1,y_2,\dots,y_r)\in Q: y_1\geq y_2\geq\cdots\geq y_r\}.
\end{align}
\end{subequations}
For $y\in Q$ we also define $y_{ij}:=y_i-y_j$ for $1\leq i<j\leq r$,
where the reader is warned that for the sake of brevity the two indices
$i$ and $j$ will not be separated by a comma. 
Let $\varepsilon_i$ denote the $i$th standard unit vector in $\mathbb{R}^r$
and $\langle \cdot,\cdot\rangle$ the standard scalar product on 
$\mathbb{R}^r$, so that 
$\langle\varepsilon_i,\varepsilon_j\rangle=\delta_{i,j}$, with $\delta_{i,j}$
the Kronecker delta.
For $i\in I:=\{1,\dots,r-1\}$, let
\[
\alpha_i=\varepsilon_i-\varepsilon_{i+1}
\quad\text{and}\quad
\omega_i=\varepsilon_1+\dots+\varepsilon_i-
\frac{i}{r}(\varepsilon_1+\dots+\varepsilon_r)
\]
be the $i$th simple root and $i$th fundamental weight of $\mathfrak{sl}_r$
respectively, so that $\langle\alpha_i,\omega_j\rangle=\delta_{i,j}$.
Then $Q_{+}$ corresponds to $\sum_{i\in I} \mathbb{N}_0\alpha_i$
and $Q_{++}=Q\cap P_{+}$, where 
$P_{+}:=\sum_{i\in I} \mathbb{N}_0\omega_i$ is the set of dominant
(integral) weights of $\mathfrak{sl}_r$.

In this paper, $q$-series are typically viewed as elements of the formal
power series ring $R[[q]]$ with $R$ an appropriate coefficient ring or
field, such as $\mathbb{Z}$, $\mathbb{Q}(a)$ or $\mathbb{Q}(z,w)$.
A notable exception will be the $q$-series featured in Gustafson's
${_6\psi_6}$ summation \eqref{Eq_Gustafson} for the affine root system
$\mathrm{A}_{r-1}^{(1)}$. 
This require complex $q$ such that $\abs{q}<1$.
Many of our proofs rely on identities for basic hypergeometric
functions~\cite{GR04}.
Using the condensed notation
\[
(a_1,\dots,a_k;q)_n=\prod_{i=1}^k (a_i;q)_n,
\]
for $n\in\mathbb{Z}\cup\{\infty\}$, the
${_r\phi_s}$ basic hypergeometric function is defined as
\begin{equation}\label{Eq_rphis}
\qHyp{r}{s}{a_1,\dots,a_r}{b_1,\dots,b_s}{q,z}:=
\sum_{k=0}^{\infty} \frac{(a_1,\dots,a_r;q)_k}{(q,b_1,\dots,b_s;q)_k}
\Big((-1)^k q^{\binom{k}{2}}\Big)^{s-r+1} z^k.
\end{equation}
This will only ever be used for terminating series, i.e., for series
such that one of the numerator variables $a_i$ is of the form $q^{-n}$
for $n$ a nonnegative integer.
This ensures the summand vanishes unless $k\in\{0,1,\dots,n\}$.
We also adopt the standard one-line notation 
\[
{_r\phi_s}(a_1,\dots,a_r;b_1,\dots,b_s;q,z)
\]
for the series \eqref{Eq_rphis} and abbreviate the very-well-poised
basic hypergeometric function
\[
\qHyp{r}{r-1}{a_1,a_1^{1/2}q,-a_1^{1/2}q,a_4,\dots,a_r}
{a_1^{1/2},-a_1^{1/2},a_1q/a_4,\dots,a_1q/a_r}{q,z}
\]
as ${_rW_{r-1}}(a_1;a_4,\dots,a_r;q,z)$.


\section{The $\mathrm{A}_1$ Bailey lemma}\label{Sec_A1-tree}

To motivate the $\mathrm{A}_2$ Bailey tree presented in the next
section, we first review the classical $\mathrm{A}_1$ case.
Since the aim is to prove $\mathrm{A}_2^{(1)}$ generalisations of the
Andrews--Gordon--Bressoud identities \eqref{Eq_AGB}, we will focus on
that part of the Bailey machinery needed for proving~\eqref{Eq_AGB}.
This allows us to adopt simpler notation than is typically found in
treatments of the Bailey lemma such as in~\cite{Andrews84,Andrews86,W01}.
This notation is also more suited to generalisation to 
$\mathrm{A}_2$ and, ultimately, $\mathrm{A}_{r-1}$, since for higher rank
the use of actual Bailey pairs often is notationally very cumbersome.
The reader familiar with the existing literature should have no 
difficulties translating most of the results presented below in terms 
of Bailey pairs and transformations of such pairs.

Recall that $1/(q;q)_n=0$ for $n$ a negative integer.
The main ingredients in our treatment of the Bailey lemma are the
following three rational functions:
\begin{equation}\label{Eq_Phi-zu}
\Phi_n(z;q):=\frac{1}{(q,zq;q)_n}, \qquad
\Phi_n(u;z;q):=\frac{1-uz-(1-u)zq^n}{(q;q)_n(z;q)_{n+1}} 
\end{equation}
and
\[
\mathcal{K}_{n;r}(z;q):=\frac{z^r q^{r^2}}{(q;q)_{n-r}},
\]
where $n,r\in\mathbb{Z}$.
The reason for separating $u$ and $z$ as well as $n$ and $r$ by semicolons
is that $n,r,u$ and $z$ all become sequences in the higher-rank case.
For later reference we note that
\begin{subequations}
\begin{gather}
\Phi_n(z^{-1};q^{-1})=(zq)^n q^{n^2} \Phi_n(z;q), 
\label{Eq_Phin-inv} \\[1mm]
\Phi_n(1;z;q)=\Phi_n(z;q), \qquad
\Phi_n(z^{-1};z;q)=q^n \Phi_n(z;q)
\end{gather}
\end{subequations}
and
\begin{equation}\label{Eq_Phi-rec}
\Phi_n(u;z;q)=\Phi_n(z/q;q)-\frac{uz}{(z;q)_2}\,\Phi_{n-1}(zq;q).
\end{equation}
From \cite[Equation (2.3.4)]{GR04} it follows that
\begin{equation}\label{Eq_inversion-A1}
\sum_{r=N}^n q^{n-r} 
\Phi_{n-r}\big(zq^{2r};q\big)
\Phi_{r-N}\big(zq^{2r};q^{-1}\big) 
=\delta_{n,N},
\end{equation}
which is Andrews' $\mathrm{A}_1$ matrix inversion
\cite[Lemma 3]{Andrews79} in disguise.

A key role in the Bailey lemma \cite{Bailey48} is played by the
above-mentioned Bailey pairs.
These are pairs of sequences $(\alpha(z;q),\beta(z;q))$ indexed by
nonnegative integers and depending on parameters $z$ and $q$
such that\footnote{It the literature on the Bailey lemma it is customary
to use $a$ instead of $z$, and to refer to a pair satisfying
\eqref{Eq_BP-A1} as a Bailey pair relative to $a$.}
\begin{subequations}
\begin{equation}\label{Eq_BP-A1}
\beta_n(z;q)=\sum_{r=0}^n \frac{\alpha_r(z;q)}{(q;q)_{n-r}(zq;q)_{n+r}},
\end{equation}
or, equivalently, \cite[Lemma 3]{Andrews79}
\begin{equation}\label{Eq_BP-A1-inv}
\alpha_n(z;q)=\sum_{r=0}^n \frac{1-zq^{2n}}{1-z}\,
(-1)^{n-r} q^{\binom{n-r}{2}}\,
\frac{(z;q)_{n+r}}{(q;q)_{n-r}}\,\beta_r(z;q).
\end{equation}
\end{subequations}

In \cite{Andrews84} Andrews discovered that, given a Bailey pair relative
to $z$, there is a simple transformation (already implicit in the work of
Bailey) that turns this pair into a new Bailey pair relative to $z$.
This can be iterated to yield what Andrews termed the Bailey chain:
\begin{equation}\label{Eq_chain}
\big(\alpha(z;q),\beta(z;q)\big)\mapsto \big(\alpha'(z;q),\beta'(z;q)\big)
\mapsto \big(\alpha''(z;q),\beta''(z;q)\big)\mapsto\cdots.
\end{equation}
The essence of (a special case of) this transformation is captured in the
following lemma, which by abuse of terminology we also refer to as the
$\mathrm{A}_1$ Bailey chain.
In particular it should be clear that the result below lends itself to 
iteration thanks to its reproducing nature.

\begin{lemma}[$\mathrm{A}_1$ Bailey chain]\label{Lem_chain}
For $n$ a nonnegative integer,
\begin{equation}\label{Eq_Bailey-chain}
\sum_{r=0}^n \mathcal{K}_{n;r}(z;q) \Phi_r(z;q)=\Phi_n(z;q).
\end{equation}
\end{lemma}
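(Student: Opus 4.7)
The plan is to show that Lemma~\ref{Lem_chain} reduces, after unwrapping definitions, to a single standard basic hypergeometric evaluation, namely a terminating ${_1\phi_1}$ sum obtained as a limiting case of the $q$-Chu--Vandermonde formula.

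First I would substitute $\mathcal{K}_{n;r}(z;q)=z^rq^{r^2}/(q;q)_{n-r}$ and $\Phi_r(z;q)=1/((q;q)_r(zq;q)_r)$ on the left-hand side of \eqref{Eq_Bailey-chain}, multiply both sides by $(q;q)_n$, and recognise $(q;q)_n/((q;q)_r(q;q)_{n-r})=\qbin{n}{r}$, so that the claim reduces to
\begin{equation*}
\sum_{r=0}^n \qbin{n}{r}\,\frac{z^rq^{r^2}}{(zq;q)_r}=\frac{1}{(zq;q)_n}.
\end{equation*}
Next I would convert the $q$-binomial coefficient using the standard identity $\qbin{n}{r}=(-1)^rq^{nr-\binom{r}{2}}(q^{-n};q)_r/(q;q)_r$ and combine exponents via $r^2+nr-\binom{r}{2}=\binom{r}{2}+(n+1)r$. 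The sum then takes the form
\begin{equation*}
\sum_{r=0}^n \frac{(q^{-n};q)_r}{(q,zq;q)_r}\,q^{\binom{r}{2}}\bigl(-zq^{n+1}\bigr)^r
={_1\phi_1}\bigl(q^{-n};zq;q,zq^{n+1}\bigr),
\end{equation*}
and the desired identity becomes the well-known evaluation
${_1\phi_1}(q^{-n};c;q,cq^n)=1/(c;q)_n$ with $c=zq$, obtained by sending $a\to\infty$ in the terminating $q$-Chu--Vandermonde sum ${_2\phi_1}(a,q^{-n};c;q,cq^n/a)=(c/a;q)_n/(c;q)_n$.

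The limit $a\to\infty$ is harmless because the sum terminates at $r=n$ and $(a;q)_r/a^r\to(-1)^rq^{\binom{r}{2}}$ termwise, so the identity is valid as an identity in $\mathbb{Z}[z][[q]]$, which is all that is required in the formal setting adopted in Section~\ref{Sec_Prelims}. If one prefers to avoid any appeal to ${_r\phi_s}$ machinery at this early stage, the same claim admits a short inductive proof: the base case $n=0$ is immediate, and the step from $n-1$ to $n$ follows by isolating the $r=n$ contribution and using the elementary recursions $(q;q)_{n-r}=(1-q^{n-r})(q;q)_{n-r-1}$ together with $\Phi_n(z;q)=\Phi_{n-1}(z;q)/((1-q^n)(1-zq^n))$ to match coefficients.

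The main obstacle is not mathematical but notational: one has to keep careful track of the quadratic exponent $q^{r^2}$ coming from $\mathcal{K}_{n;r}$, which is the source of the $q^{\binom{r}{2}}$ factor needed to bring the sum into ${_1\phi_1}$ form. Once the powers of $q$ are correctly collected, the identity is a single application of a classical summation and the reproducing property announced in the discussion preceding the lemma is manifest, since the right-hand side has the same shape as the summand $\Phi_r(z;q)$ on the left, with $r$ replaced by $n$.
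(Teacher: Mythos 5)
Your proposal is correct and follows essentially the same route as the paper, which also reduces \eqref{Eq_Bailey-chain} to the terminating ${_1\phi_1}$ evaluation ${_1\phi_1}(q^{-n};zq;q,zq^{n+1})=1/(zq;q)_n$ (the terminating form of \cite[Equation (II.5)]{GR04}). Your extra details — the explicit bookkeeping of the exponent $r^2+nr-\binom{r}{2}=\binom{r}{2}+(n+1)r$ and the derivation of the ${_1\phi_1}$ sum as a limit of $q$-Chu--Vandermonde — are sound but not a different method.
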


\begin{proof}
In $q$-hypergeometric notation the identity \eqref{Eq_Bailey-chain} is
\begin{equation}\label{Eq_1phi1}
{_1\phi_1}(q^{-n};zq;q;zq^{n+1})=\frac{1}{(zq;q)_n},
\end{equation}
which is the terminating form of~\cite[Equation (II.5)]{GR04}.
\end{proof}

\begin{corollary}\label{Cor_infinite-iteration-A1}
We have
\[
\Phi_n(z;q)=\sum_{\la\in\Par} 
\prod_{i\geq 1} \frac{z^{\la_i} q^{\la_i^2}}{(q;q)_{\la_{i-1}-\la_i}},
\]
where $\la_0:=n$.
\end{corollary}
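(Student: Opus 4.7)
The plan is to iterate Lemma~\ref{Lem_chain} infinitely many times and interpret the result in $R[[q]]$. Concretely, the $\mathrm{A}_1$ Bailey chain states that
\[
\Phi_n(z;q)=\sum_{r=0}^n \frac{z^r q^{r^2}}{(q;q)_{n-r}}\,\Phi_r(z;q),
\]
which is itself of the form ``$\Phi$ evaluated at an outer index equals a sum of $\Phi$'s evaluated at smaller indices''. The reproducing nature of this identity is precisely what allows iteration.

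First I would prove, by induction on $N\geq 0$, the finite-depth version
\[
\Phi_n(z;q)=\sum_{n=\la_0\geq\la_1\geq\cdots\geq\la_N\geq 0}
\Phi_{\la_N}(z;q)\prod_{i=1}^N
\frac{z^{\la_i}q^{\la_i^2}}{(q;q)_{\la_{i-1}-\la_i}}.
\]
The base case $N=0$ is trivial, and the inductive step is a direct application of Lemma~\ref{Lem_chain} with $n$ replaced by $\la_N$, recognising the new summation index as $\la_{N+1}$ and using that $1/(q;q)_m=0$ for $m<0$ to drop the upper bound of summation.

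Next I would pass to $N\to\infty$ in the formal power series ring $R[[q]]$. For a partition $\la$ with $l(\la)\leq N$, the tail factor $\Phi_{\la_N}(z;q)$ equals $\Phi_0(z;q)=1$, and the corresponding term is exactly the summand on the right-hand side of the claimed identity (noting that for $i>l(\la)$ one has $\la_i=\la_{i-1}=0$, so $z^{\la_i}q^{\la_i^2}=1$ and $(q;q)_{\la_{i-1}-\la_i}=1$). Conversely, any term with $\la_N\geq 1$ carries a factor $q^{\la_1^2+\cdots+\la_N^2}\geq q^N$, so modulo $q^{M+1}$ only the finitely many partitions of length at most $M$ contribute, and for $N>M$ these are all captured by the tail$=1$ contribution.

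The only substantive point is this last formal-power-series argument; it is routine once one observes the $q^N$ bound on the omitted terms, so there is no real obstacle. The conclusion is
\[
\Phi_n(z;q)=\sum_{\la\in\Par}\prod_{i\geq 1}
\frac{z^{\la_i}q^{\la_i^2}}{(q;q)_{\la_{i-1}-\la_i}}
\]
with $\la_0:=n$, as required.
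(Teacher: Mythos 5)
Your proof is correct and follows exactly the paper's own argument: a $k$-fold (here $N$-fold) iteration of Lemma~\ref{Lem_chain} followed by letting the depth tend to infinity, with the only addition being that you spell out the formal-power-series justification (the $q^{N}$ bound on terms with $\la_N\geq 1$) that the paper leaves implicit. The only nitpick is that in the sentence identifying the tail factor with $1$ you should write $\la_N=0$ (equivalently $l(\la)<N$) rather than $l(\la)\leq N$, but your next sentence treats the $\la_N\geq 1$ case separately, so the argument is sound as written.
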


\begin{proof}
By a $k$-fold application of \eqref{Eq_Bailey-chain},
\[
\Phi_n(z;q)=\sum_{\substack{\la\in\Par \\[1pt] l(\la)\leq k}}
\Phi_{\la_k}(z;q)
\prod_{i=1}^k \mathcal{K}_{\la_{i-1};\la_i}(z;q), 
\]
where $\la_0:=n$.
Letting $k$ tend to infinite yields the claim.
\end{proof}

The Bailey chains \eqref{Eq_chain} or \eqref{Eq_Bailey-chain} alone are 
not enough to prove the full set of Andrews--Gordon--Bressoud identities
\eqref{Eq_AGB}, and in \cite{AAB87} Agarwal, Andrews and Bressoud found
a further transformation for Bailey pairs, this time scaling the parameter
$z$ by a factor $q$:
\[
\big(\alpha(z;q),\beta(z;q)\big)\mapsto 
\big(\alpha'(z/q;q),\beta'(z/q;q)\big).
\]
Combining this with the original transformation allows for more complicated
patterns of iteration which are not linear in nature.
This led Agarwal, Andrews and Bressoud to refer to their discovery as
the Bailey lattice.
Equipped with the Bailey lattice it is a simple exercise to prove
\eqref{Eq_AGB} in full.
The part of the Bailey lattice needed for proving the
Andrews--Gordon--Bressoud identities has the structure of a simple
binary tree, and is captured in the following lemma.

\begin{lemma}[$\mathrm{A}_1$ Bailey tree]\label{Lem_A1-tree}
For $n$ a nonnegative integer,
\begin{subequations}
\begin{align}\label{Eq_A1-tree-a}
\sum_{r=0}^n \mathcal{K}_{n;r}(z;q)\Phi_{r}(1;z;q)&=\Phi_n(1;z;q),
\intertext{and}
\sum_{r=0}^n \mathcal{K}_{n;r}(z/q;q)\Phi_r(u;z;q)&=\Phi_n(uz;z;q).
\label{Eq_A1-tree-b}
\end{align}
\end{subequations}
\end{lemma}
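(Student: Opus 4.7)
My plan breaks into two parts. For part (a), the identity $\Phi_n(1;z;q)=\Phi_n(z;q)$, recorded just above \eqref{Eq_Phi-rec}, reduces \eqref{Eq_A1-tree-a} directly to the $\mathrm{A}_1$ Bailey chain of Lemma~\ref{Lem_chain}, so no further work is needed.

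For part (b), the approach I would take is to decompose $\Phi_r(u;z;q)$ via the recursion \eqref{Eq_Phi-rec} into two pieces, each of which fits the Bailey chain at a suitably shifted base. Substituting
\[
\Phi_r(u;z;q)=\Phi_r(z/q;q)-\frac{uz}{(z;q)_2}\,\Phi_{r-1}(zq;q)
\]
into the left-hand side of \eqref{Eq_A1-tree-b} yields two sums. The first, $\sum_{r=0}^n \mathcal{K}_{n;r}(z/q;q)\Phi_r(z/q;q)$, collapses to $\Phi_n(z/q;q)$ by Lemma~\ref{Lem_chain} with $z\mapsto z/q$. The second sum has a vanishing $r=0$ term (by the convention $1/(q;q)_{-1}=0$), and after the shift $r\to s+1$ one checks the kernel identity $\mathcal{K}_{n;s+1}(z/q;q)=z\,\mathcal{K}_{n-1;s}(zq;q)$, which follows from the elementary rearrangement $(z/q)^{s+1}q^{(s+1)^2}=z\cdot(zq)^s q^{s^2+s}$. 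Lemma~\ref{Lem_chain} applied with $(n,z)\mapsto(n-1,zq)$ then evaluates this second sum as $z\,\Phi_{n-1}(zq;q)$. Reassembling,
\[
\text{LHS of \eqref{Eq_A1-tree-b}}=\Phi_n(z/q;q)-\frac{uz^2}{(z;q)_2}\,\Phi_{n-1}(zq;q),
\]
which is exactly \eqref{Eq_Phi-rec} with $u$ replaced by $uz$, i.e.\ $\Phi_n(uz;z;q)$, as required.

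There is essentially no obstacle: the whole argument sandwiches two applications of the Bailey chain between two uses of the recursion \eqref{Eq_Phi-rec}. The only step that demands any care is verifying the shifted form of $\mathcal{K}_{n;r}(z/q;q)$ under $r\to s+1$, but this is a one-line computation with $q$-powers. A conceptually appealing feature of this plan, which foreshadows the $\mathrm{A}_2$ case treated in the next section, is that the base-shifting factor of $z$ that appears in the Bailey lattice emerges naturally from the interaction of the kernel $\mathcal{K}_{n;r}(z/q;q)$ with the second term in the recursion for $\Phi_r(u;z;q)$, rather than being introduced by hand.
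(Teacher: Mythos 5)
Your proposal is correct and follows essentially the same route as the paper: part (a) is the Bailey chain restated, and part (b) is proved by splitting via the recursion \eqref{Eq_Phi-rec}, noting $\Phi_{-1}=0$, shifting $r\mapsto r+1$, and using the kernel identity $\mathcal{K}_{n;r+1}(z/q;q)=z\,\mathcal{K}_{n-1;r}(zq;q)$ to reduce both pieces to Lemma~\ref{Lem_chain} with shifted parameters. The paper merely phrases the same computation as equating the constant term and the coefficient of $u$ of the two degree-one polynomials in $u$.
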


By $\Phi_n(1;z;q)=\Phi_n(z;q)$, the first claim is merely a restatement
of the Bailey chain.
The crucial part of Lemma~\ref{Lem_A1-tree} is that one can first
repeatedly apply \eqref{Eq_A1-tree-a} (or \eqref{Eq_Bailey-chain}) and
then change the nature of the iteration by continuing with
\eqref{Eq_A1-tree-b}, initially with $u=1$, then $u=z$, $u=z^2$ and so
on, changing the linear nature of Lemma~\ref{Lem_chain}, instead
generating the binary tree
\label{page_tree}
\begin{center}
\begin{tikzpicture}[scale=0.35,line width=0.3pt]
\draw[dashed,blue] (0,0)--(6,-6);
\draw[dashed,blue] (-1,-1)--(4,-6);
\draw[dashed,blue] (-2,-2)--(2,-6);
\draw[dashed,blue] (-3,-3)--(0,-6);
\draw[dashed,blue] (-4,-4)--(-2,-6);
\draw[dashed,blue] (-5,-5)--(-4,-6);
\draw[dashed,blue] (-5,-5)--(-6,-6);
\draw[blue] (0,0)--(-5,-5);
\draw[blue] (0,0)--(5,-5);
\draw[blue] (-1,-1)--(3,-5);
\draw[blue] (-2,-2)--(1,-5);
\draw[blue] (-3,-3)--(-1,-5);
\draw[blue] (-4,-4)--(-3,-5);
\draw[blue] (-5,-5);
\draw[blue] (0,0)--(-5,-5);
\foreach \y in {0,...,-5} \foreach \x in {0,...,-\y} 
        {\draw[blue,fill=white] (2*\x+\y,\y) circle (3.5mm);}
\foreach \y in {0,...,-5} \draw (\y,\y) node {$\scriptstyle 0$};
\foreach \y in {-1,...,-5} \draw (\y+2,\y) node {$\scriptstyle 1$};
\foreach \y in {-2,...,-5} \draw (\y+4,\y) node {$\scriptstyle 2$};
\foreach \y in {-3,...,-5} \draw (\y+6,\y) node {$\scriptstyle 3$};
\foreach \y in {-4,...,-5} \draw (\y+8,\y) node {$\scriptstyle 4$};
\foreach \y in {-5,...,-5} \draw (\y+10,\y) node {$\scriptstyle 5$};
\end{tikzpicture}
\end{center}
where the label $i\in\mathbb{N}_0$ represents the rational function 
$\Phi_n(z^i;z;q)$.
Of course, \eqref{Eq_A1-tree-b} in isolation allows for
\[
\Phi_n(u;z;q)\mapsto \Phi_n(uz;z;q)\mapsto 
\Phi_n(uz^2;z;q)\mapsto \Phi_n(uz^3;z;q)\mapsto\cdots,
\]
but if one wishes to combine \eqref{Eq_A1-tree-a} and \eqref{Eq_A1-tree-b}
then this fixes $u=1$.

\begin{proof}[Proof of Lemma~\ref{Lem_A1-tree}]
Since \eqref{Eq_A1-tree-a} is a restatement of \eqref{Eq_Bailey-chain},
only \eqref{Eq_A1-tree-b} requires proof.

By \eqref{Eq_Phi-zu} or \eqref{Eq_Phi-rec} it is clear that both sides 
of \eqref{Eq_A1-tree-b} are polynomials in $u$ of degree one.
Taking the constant term using \eqref{Eq_Phi-rec} yields
\eqref{Eq_Bailey-chain} with $z\mapsto z/q$.
Similarly, extracting the coefficient of $u$ in \eqref{Eq_A1-tree-b}
and dividing both sides by $-z/(z;q)_2$, gives
\[
\sum_{r=1}^n \mathcal{K}_{n;r}(z/q;q) \Phi_{r-1}(zq;q)
=z\Phi_{n-1}(zq;q).
\]
Here we have also used that $\Phi_{-1}=0$ to change the lower bound on
the sum from $0$ to $1$.
Shifting $r\mapsto r+1$ and noting that
\[
\mathcal{K}_{n;r+1}(z/q;q)=z\mathcal{K}_{n-1;r}(zq;q),
\]
results in \eqref{Eq_Bailey-chain} with $(z,n)\mapsto(zq,n-1)$.
\end{proof}

Before we are ready to demonstrate how the Andrews--Gordon--Bressoud
identities \eqref{Eq_AGB} arise from the above results, a slight
reformulation of the previous two lemmas is needed.
For this purpose we define
\begin{align*}
\Phi_{n;y}(z;q)&:=\frac{\Phi_{n-y}(zq^{2y};q)}{(zq;q)_{2y}}
=\frac{1}{(q;q)_{n-y}(zq;q)_{n+y}}, \\[1mm]
\Phi_{n;y}(u;z;q)&:=
\frac{\Phi_{n-y}(u;zq^{2y};q)}{(zq;q)_{2y}}
=\frac{1-uzq^{2y}-(1-u)zq^{n+y}}{(q;q)_{n-y}(zq;q)_{n+y}(1-zq^{2y})},
\end{align*}
where $n,y\in\mathbb{Z}$.
Note that once again $\Phi_{n;y}(1;z;q)=\Phi_{n;y}(z;q)$, and that
$\Phi_{n;y}$ vanishes unless $n\geq y$.
Further note that \eqref{Eq_BP-A1} and \eqref{Eq_BP-A1-inv} 
can be recast in terms of $\Phi_{n;y}(z;q)$ and $\Phi_n(z;q)$ as
\begin{subequations}
\begin{align}\label{Eq_Bailey-pair-2} 
\beta_n(z;q)&=\sum_{r=0}^n \Phi_{n;r}(z;q)\alpha_r(z;q), \\
\label{Eq_Bailey-pair-inv-2} 
\alpha_n(z;q)&=q^{-n} (zq;q)_{2n} \sum_{r=0}^n
q^r \Phi_{n-r}\big(zq^{2n};q^{-1}\big) \beta_{r}(z;q).
\end{align}
\end{subequations}
By replacing $(z,n)\mapsto (zq^{2y},n-y)$ in Lemmas \ref{Lem_chain}
and \ref{Lem_A1-tree} and then shifting the summation index
$r\mapsto r-y$, the following two corollaries 
arise.\footnote{Corollary~\ref{Cor_A1-chain} for $z=1$ and $z=q$ is
equivalent to \cite[(R1) \& (R2)]{Paule85} in that \eqref{Eq_A1-chain} 
for these two values of $z$ corresponds to the coefficient of $a_n$ 
and $b_n$ in equations (R1) and (R2) of \cite{Paule85}.}

\begin{corollary}\label{Cor_A1-chain}
For $n,y\in\mathbb{Z}$,
\begin{equation}\label{Eq_A1-chain}
\sum_{r=y}^n \mathcal{K}_{n;r}(z;q)\Phi_{r;y}(z;q)
=z^y q^{y^2} \Phi_{n;y}(z;q).
\end{equation}
\end{corollary}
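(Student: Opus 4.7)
The plan is simply to execute the substitution and index shift suggested in the text. Starting from Lemma~\ref{Lem_chain},
\[
\sum_{r=0}^n \mathcal{K}_{n;r}(z;q)\Phi_{r}(z;q)=\Phi_n(z;q),
\]
I would replace $(z,n)\mapsto(zq^{2y},n-y)$ and then shift the summation index $r\mapsto r-y$, obtaining
\[
\sum_{r=y}^n \mathcal{K}_{n-y;\,r-y}\bigl(zq^{2y};q\bigr)
\Phi_{r-y}\bigl(zq^{2y};q\bigr)=\Phi_{n-y}\bigl(zq^{2y};q\bigr).
\]

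Next I would translate each factor back into the notation of the corollary. From the definition of $\Phi_{n;y}$, the identities
\[
\Phi_{r-y}\bigl(zq^{2y};q\bigr)=(zq;q)_{2y}\,\Phi_{r;y}(z;q),
\qquad
\Phi_{n-y}\bigl(zq^{2y};q\bigr)=(zq;q)_{2y}\,\Phi_{n;y}(z;q)
\]
are immediate. For the $\mathcal{K}$-factor a short exponent count gives
\[
\mathcal{K}_{n-y;\,r-y}\bigl(zq^{2y};q\bigr)
=\frac{(zq^{2y})^{r-y}q^{(r-y)^2}}{(q;q)_{n-r}}
=\frac{z^{r-y}q^{\,r^{2}-y^{2}}}{(q;q)_{n-r}}
=z^{-y}q^{-y^{2}}\,\mathcal{K}_{n;r}(z;q),
\]
since $2y(r-y)+(r-y)^2=r^2-y^2$.

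Substituting these three expressions into the transformed identity, the common factor $(zq;q)_{2y}$ cancels from both sides, and clearing the scalar $z^{-y}q^{-y^{2}}$ on the left yields exactly
\[
\sum_{r=y}^n \mathcal{K}_{n;r}(z;q)\Phi_{r;y}(z;q)
=z^y q^{y^2}\Phi_{n;y}(z;q),
\]
which is Corollary~\ref{Cor_A1-chain}. There is no real obstacle here: the only thing that could go wrong is an arithmetic slip in the exponent of $q$ coming from $\mathcal{K}_{n-y;\,r-y}(zq^{2y};q)$, so I would take care to verify that step explicitly, but otherwise the whole argument is a direct reindexing of Lemma~\ref{Lem_chain}.
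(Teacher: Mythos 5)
Your proposal is correct and is precisely the paper's own argument: the text immediately preceding the corollary derives it by replacing $(z,n)\mapsto(zq^{2y},n-y)$ in Lemma~\ref{Lem_chain} and shifting $r\mapsto r-y$, and your exponent check $2y(r-y)+(r-y)^2=r^2-y^2$ is the only computation needed. (For $n<y$ both sides vanish trivially, so restricting Lemma~\ref{Lem_chain} to nonnegative arguments causes no loss.)
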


\begin{corollary}\label{Cor_A1-tree}
For $n,y\in\mathbb{Z}$,
\begin{subequations}
\begin{align}
\sum_{r=y}^n \mathcal{K}_{n;r}(z;q) \Phi_{r;y}(1;z;q)
&=z^y q^{y^2} \Phi_{n;y}(1;z;q),
\intertext{and}
\sum_{r=y}^n \mathcal{K}_{n;r}(z/q;q) \Phi_{r;y}(u;z;q) 
&=z^y q^{y^2-y} \Phi_{n;y}(uzq^{2y};z;q). 
\label{Eq_right-branch}
\end{align}
\end{subequations}
\end{corollary}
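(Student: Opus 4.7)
The plan is to deduce Corollary~\ref{Cor_A1-tree} directly from Lemma~\ref{Lem_A1-tree} by the substitution announced in the paragraph preceding the statement: in both \eqref{Eq_A1-tree-a} and \eqref{Eq_A1-tree-b} replace $(z,n)\mapsto (zq^{2y},n-y)$ and then shift the summation index $r\mapsto r-y$, so that the new summation range becomes $y\le r\le n$.

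Two routine bookkeeping identities handle the conversion to the desired form. Using the definition $\mathcal{K}_{n;r}(z;q)=z^rq^{r^2}/(q;q)_{n-r}$, one checks by direct computation that
\[
\mathcal{K}_{n-y;\,r-y}\big(zq^{2y};q\big)=z^{-y}q^{-y^2}\,\mathcal{K}_{n;r}(z;q)
\quad\text{and}\quad
\mathcal{K}_{n-y;\,r-y}\big(zq^{2y-1};q\big)=z^{-y}q^{-y^2+y}\,\mathcal{K}_{n;r}(z/q;q),
\]
and these account for the overall prefactors $z^yq^{y^2}$ and $z^yq^{y^2-y}$ on the right-hand sides of the two identities of the corollary. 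Next, the definition of $\Phi_{n;y}$ together with the telescoping $(zq;q)_{2y}(zq^{2y+1};q)_{r-y}=(zq;q)_{r+y}$ yields
\[
\Phi_{r-y}(zq^{2y};q)=(zq;q)_{2y}\,\Phi_{r;y}(z;q),
\qquad
\Phi_{r-y}(u;zq^{2y};q)=(zq;q)_{2y}\,\Phi_{r;y}(u;z;q),
\]
and the same relations with $r$ replaced by $n$; the common factor $(zq;q)_{2y}$ therefore cancels between the two sides of each substituted identity. Assembling these reductions, Lemma~\ref{Lem_A1-tree} passes into Corollary~\ref{Cor_A1-tree} without any further work.

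The step requiring care is purely notational: no new $q$-series content is invoked beyond what is already used in Lemma~\ref{Lem_A1-tree}. In the marginal case $n<y$ the right-hand side vanishes via the factor $1/(q;q)_{n-y}$ inside $\Phi_{n;y}$ and the sum on the left is simultaneously empty, so the identities hold trivially under the convention $1/(q;q)_m=0$ for $m<0$. This is therefore a workaday lemma and the only real danger is a sign or power-of-$q$ slip in the prefactor accounting above.
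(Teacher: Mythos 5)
Your proposal is correct and is exactly the paper's argument: the paper derives Corollary~\ref{Cor_A1-tree} from Lemma~\ref{Lem_A1-tree} by the substitution $(z,n)\mapsto(zq^{2y},n-y)$ followed by the index shift $r\mapsto r-y$, with the same prefactor bookkeeping for $\mathcal{K}$ and the cancellation of $(zq;q)_{2y}$ that you spell out. Your remark on the vacuous case $n<y$ correctly disposes of the extension to all $n,y\in\mathbb{Z}$.
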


We are now ready to give a short proof of \eqref{Eq_AGB}.

\begin{proof}
Slater's Bailey pairs B(3) and E(3) \cite{Slater51} are equivalent to
the following pair of polynomial identities:\footnote{These two results
can be traced back to Rogers' work on the Rogers--Ramanujan identities.
For example, the left-hand side of \eqref{Eq_B3} is what Rogers denotes
by $q^{-(n+1)^2}\beta_{2n+1}$ on page 316 of~\cite{Rogers17}. 
His equation (5) on the following page then states that 
$q^{-n-1} \beta_{2n+1}/(q;q)_{2n+1}=q^{n(n+1)}/(q;q)_n$.}
\begin{equation}\label{Eq_B3}
\sum_{y=-n-1}^n(-1)^y q^{3\binom{y}{2}+2y} \qbin{2n+1}{n-y}=
\frac{(q;q)_{2n+1}}{(q;q)_n}
\end{equation}
and
\[
\sum_{y=-n-1}^n(-1)^y q^{2\binom{y}{2}+y} \qbin{2n+1}{n-y}=
\frac{(q;q)_{2n+1}}{(q^2;q^2)_n},
\]
where $n$ is a nonnegative integer.
In terms of the rational function $\Phi_{n;y}(z;q)$,
Slater's identities can be written as
\begin{subequations}
\begin{equation}\label{Eq_Slater}
\sum_{y=-n-1}^n(-1)^y q^{(2+\tau)\binom{y+1}{2}-y} \Phi_{n;y}(q;q)=
\frac{1-q}{(q^{2-\tau};q^{2-\tau})_n},
\end{equation}
where $\tau\in\{0,1\}$.
Although this form of the identity is perfectly suitable for the
application of the Bailey tree, we will rewrite it further to more
closely mimic its $\mathrm{A}_2$-analogue, given by \eqref{Eq_seed} 
on page~\pageref{page_seed}. 
To this end, let $t_y$ be the summand of \eqref{Eq_Slater} and rewrite 
the sum as $\sum_y t_y=\sum_y t_{2y}+\sum_y t_{-2y-1}$.
Using that $\Phi_{n;-2y-1}(q;q)=\Phi_{n;2y}(q;q)$ and thus
$t_{-2y-1}=-t_{2y} q^{4y+1}$, this yields
\begin{equation}\label{Eq_Slater2}
\sum_{y\in\mathbb{Z}} q^{(2+\tau)\binom{2y+1}{2}-2y}\,
\frac{1-q^{4y+1}}{1-q}\,\Phi_{n;2y}(q;q)
=\frac{1}{(q^{2-\tau};q^{2-\tau})_n},
\end{equation}
\end{subequations}
where it is noted that the summand vanishes unless
$-\lfloor(n+1)/2\rfloor\leq y\leq \lfloor n/2\rfloor$.
(Since both sides of \eqref{Eq_Slater} and \eqref{Eq_Slater2} 
trivially vanish for negative values of $n$, both forms of the
identity are true for all $n\in\mathbb{Z}$).

In the following we identify the identity \eqref{Eq_Slater2}
with the root of the binary tree shown on page~\pageref{page_tree}.
By a $(k-a)$-fold application of Corollary~\ref{Cor_A1-chain} with $z=q$, 
which corresponds to taking $k-a$ downward steps along the left-most 
branch of the tree,
\begin{align}\label{Eq_ka-steps}
&\sum_{y\in\mathbb{Z}} q^{(2k-2a+2+\tau)\binom{2y+1}{2}-2y}\,
\frac{1-q^{4y+1}}{1-q}\,\Phi_{n;2y}(q;q) \\
& \qquad=\sum_{\la\subseteq (n^{k-a})}
\frac{1}{(q^{2-\tau};q^{2-\tau})_{\la_{\ell}}}
\prod_{i=1}^{k-a} \mathcal{K}_{\la_{i-1};\la_i}(q;q), \notag
\end{align}
where $\la_0:=n$ and $k-a\in\mathbb{N}_0$.
We now replace $\Phi_{n;2y}(q;q)$ by $\Phi_{n;2y}(1;q;q)$
and then take $a$ steps along the tree in the south-east
direction using \eqref{Eq_right-branch} with $z=q$ and
$u=q^{(i-1)(4y+1)}$ in the $i$th step.
Since
\[
\mathcal{K}_{n;r}(z/q;q)=q^{-r} \mathcal{K}_{n;r}(z;q),
\]
this yields
\begin{align}\label{Eq_pre-simp}
&\sum_{y\in\mathbb{Z}}q^{K\binom{2y+1}{2}-2(a+1)y}\,
\frac{1-q^{4y+1}}{1-q}\,\Phi_{n;2y}\big(q^{a(4y+1)};q;q\big) \\
&\qquad=\sum_{\la\subseteq (n^k)}
\frac{1}{(q^{2-\tau};q^{2-\tau})_{\la_k}}
\prod_{i=1}^k q^{-\chi(i\leq a)\la_i}
\mathcal{K}_{\la_{i-1};\la_i}(q;q) \notag \\
&\qquad=\sum_{\la\subseteq (n^k)}
\frac{q^{\la_1^2+\dots+\la_k^2+\la_{a+1}+\dots+\la_k}}
{(q;q)_{n-\la_1}(q;q)_{\la_1-\la_2}\cdots(q;q)_{\la_{k-1}-\la_k}
(q^{2-\tau};q^{2-\tau})_{\la_k}}, \notag
\end{align}
where $a,k$ are integers such that $0\leq a\leq k$, 
and $K:=2k+2+\tau$.
We note that the path along the tree we have taken is \label{page_path}
\begin{center}
\begin{tikzpicture}[scale=0.35,line width=0.3pt]
\draw[blue] (0,0)--(-2,-2);
\draw[dashed,blue] (-1.9,-1.9)--(-4.1,-4.1);
\draw[blue] (-4,-4)--(-5,-5)--(-4,-6);
\draw[dashed,blue] (-3.8,-6.2)--(-2,-8);
\draw[blue] (-2,-8)--(-1,-9);
\foreach \y in {0,-1,-2,-4,-5} {\filldraw[blue] (\y,\y) circle (0.75mm);}
\foreach \y in {-6,-8,-9} {\filldraw[blue] (-10-\y,\y) circle (0.75mm);}
\draw[red] (-0.4,0.4) node {$\scriptstyle 0$};
\draw[red] (-1.4,-0.6) node {$\scriptstyle 1$};
\draw[red] (-2.4,-1.6) node {$\scriptstyle 2$};
\draw[red] (-6,-5) node {$\scriptstyle k-a$};
\draw[red] (-2.7,-8.5) node {$\scriptstyle k-1$};
\draw[red] (-1.4,-9.4) node {$\scriptstyle k$};
\end{tikzpicture}
\end{center}
where the labels denote the level (or distance to the root) 
of each vertex. 

Although it is not an essential step in the proof and one can proceed
by directly taking the large-$n$ limit in \eqref{Eq_pre-simp}, we observe
that the left-hand side allows for a simplification which only requires
the function $\Phi_{n;y}(q;q)$.
This simplification is achieved by noting that
\begin{align*}
&q^{K\binom{2y+1}{2}-2(a+1)y}\,
\frac{1-q^{4y+1}}{1-q}\,\Phi_{n;2y}\big(q^{a(4y+1)};q;q\big) \\
&\qquad=\sum_{y'\in\{-2y,2y-1\}}(-1)^{y'} q^{K\binom{y'+1}{2}-(a+1)y'}\,
\frac{1-q^{n+y'+1}}{1-q}\,\Phi_{n;y'}(q;q).
\end{align*}
Hence the left-hand side of \eqref{Eq_pre-simp} may also be written as
\[
\sum_{y=-n-1}^n(-1)^y q^{K\binom{y+1}{2}-(a+1)y}\,
\frac{1-q^{n+y+1}}{1-q}\, \Phi_{n;y}(q;q).
\]
Since
\[
\lim_{n\to\infty} \Phi_{n;y}(z;q)=
\frac{1}{(q,zq;q)_{\infty}},
\]
this implies that in the large-$n$ limit
\[
\frac{1}{(q;q)_{\infty}}
\sum_{y\in\mathbb{Z}} (-1)^y q^{K\binom{y+1}{2}-(a+1)y}
=\sum_{\substack{\la\in\Par \\[1pt] l(\la)\leq k}}
\frac{q^{\la_1^2+\dots+\la_k^2+\la_{a+1}+\dots+\la_k}}
{(q;q)_{\la_1-\la_2}\cdots(q;q)_{\la_{k-1}-\la_k}
(q^{2-\tau};q^{2-\tau})_{\la_k}}.
\]
By the Jacobi triple product identity \cite[(II.28)]{GR04} the
left-hand side admits the product form
\[
\frac{(q^K;q^K)_{\infty}}{(q;q)_{\infty}}\,\theta\big(q^{a+1};q^K\big),
\]
resulting in \eqref{Eq_AGB}.
\end{proof}


\section{The $\mathrm{A}_2$ Bailey tree}\label{Sec_A2-tree}

In this section we present an $\mathrm{A}_2$-analogue of the 
$\mathrm{A}_1$ Bailey tree.
This tree is three-dimensional, or parametrisable by three
nonnegative integer variables, with an added layer of complexity
in that the structure of the tree is not actually tree-like in the
strict graph-theoretical sense.
In developing our Bailey tree we once again avoid the use
of Bailey pairs, although in the short Section~\ref{Subsec_A2-Bailey-pairs}
we briefly discuss $\mathrm{A}_2$ Bailey pairs and 
$\mathrm{A}_2$ Bailey pair inversion.

\subsection{A Bailey tree for $\mathrm{A}_2$}\label{Subsec_A2-tree}
The most important definition of this section is the
$\mathrm{A}_2$-analogue of the rational function $\Phi_n(z;q)$, and
following \cite[Definition~4.2]{ASW99} and \cite[Equation (5.1)]{W06},
we let
\begin{equation}\label{Eq_Phi}
\Phi_{n,m}(z,w;q):=\frac{(zwq;q)_{n+m}}{(q,zq,zwq;q)_n(q,wq,zwq;q)_m},
\end{equation}
where $n,m,r,s\in\mathbb{Z}$.
This function was also considered in~\cite{FFJMM09}.
A first hint that \eqref{Eq_Phi} has something to do with the $\mathrm{A}_2$
root system follows from the analogue of \eqref{Eq_Phin-inv}:
\begin{equation}\label{Eq_Phi-recip}
\Phi_{n,m}\big(z^{-1},w^{-1};q^{-1}\big)=(zq)^n(wq)^m q^{n^2-nm+m^2}
\Phi_{n,m}(z,w;q).
\end{equation}
Here $n_1^2-n_1n_2+n_2^2=\frac{1}{2}\sum_{i,j=1}^2 n_i A_{ij} n_j$,
where $(A_{ij})=(\langle \alpha_i,\alpha_j\rangle)$ is the 
$\mathrm{A}_2$ Cartan matrix.
Before we show how the function $\Phi_{n,m}(z,w;q)$ can be used to
generalise all of the results of the previous section, we state the
$\mathrm{A}_2$-analogue of Andrews' matrix
inversion~\eqref{Eq_inversion-A1}.
To the best of our knowledge this result is new.

\begin{proposition}\label{Prop_inversion}
For $n,m,N,M$ integers such that $n\geq N$ and $m\geq M$,
\begin{align}\label{Eq_inversion}
&\sum_{r=N}^n \sum_{s=M}^m q^{n+m-r-s} 
\Phi_{n-r,m-s}\big(zq^{2r-s},wq^{2s-r};q\big)
\Phi_{r-N,s-M}\big(zq^{2r-s},wq^{2s-r};q^{-1}\big) \\[1mm]
&\qquad\qquad =\delta_{n,N}\delta_{m,M}. \notag
\end{align}
\end{proposition}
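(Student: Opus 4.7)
The plan is to prove \eqref{Eq_inversion} in three stages: reduction, base case, and vanishing via double $q$-hypergeometric summation.

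First, I would reduce to $N = M = 0$ by translation. Substituting $r \mapsto r + N$, $s \mapsto s + M$, and simultaneously $z \mapsto zq^{2N - M}$, $w \mapsto wq^{2M - N}$, one checks that the combinations $zq^{2r - s}$ and $wq^{2s - r}$ are preserved after relabelling, so the general identity reduces to its $(N,M) = (0,0)$ instance with $(n, m)$ replaced by $(n - N, m - M) \in \mathbb{N}_0^2$. The base case $(n, m) = (0, 0)$ is then immediate: only the $r = s = 0$ term survives, and it equals $\Phi_{0,0}(\cdot; q) \Phi_{0,0}(\cdot; q^{-1}) = 1$.

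The remaining task is the vanishing for $(n, m) \neq (0, 0)$ with $n, m \geq 0$. I would apply the reflection identity \eqref{Eq_Phi-recip} to the $q^{-1}$-factor, converting it to base $q$ at the cost of the monomial prefactor $z^{-r} w^{-s} q^{-r^2 + rs - s^2 + r + s}$ times $\Phi_{r,s}(z^{-1} q^{s - 2r}, w^{-1} q^{r - 2s}; q)$. Using the explicit form \eqref{Eq_Phi}, one observes that both $\Phi$-factors share the combined ``$zw$''-argument $zwq^{r+s}$, so the corresponding $(zwq; q)_\bullet$ terms telescope and produce an overall factor $(zwq; q)_{n+m}$ independent of $r, s$. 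What remains is a terminating double $q$-hypergeometric series in $r$ and $s$ whose vanishing for $(n, m) \neq (0, 0)$ is the content of the proposition.

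The hard part will be evaluating this reduced sum. The mixed exponents $2r - s$ and $2s - r$, which reflect the Cartan matrix of $\mathrm{A}_2$, couple the two summations so that neither $r$ nor $s$ can be summed out in isolation in the original coordinates. My plan is to change variables to antidiagonal coordinates $k = r + s$ (with $r$ free), which decouples the ``$zw$'' powers from the individual $z$ and $w$ powers; the inner sum over $r$ at fixed $k$ should then evaluate in closed form via a terminating $q$-Chu--Vandermonde summation \cite[(II.6)]{GR04}, and the remaining sum over $k$ should collapse via a further terminating ${}_1\phi_1$ or ${}_2\phi_1$ summation to $\delta_{n,0} \delta_{m,0}$. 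An alternative I would keep in reserve is to recognise \eqref{Eq_inversion} as the matrix inversion underlying an $\mathrm{A}_2$ Bailey pair relation and derive it from the $\alpha \leftrightarrow \beta$ inversion of \cite{ASW99}, bypassing the explicit double summation entirely.
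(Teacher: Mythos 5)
Your reduction to $N=M=0$ and the trivial base case $(n,m)=(0,0)$ are exactly the paper's first step, and they are fine. The gap is in the heart of the matter: the vanishing of the double sum for $(n,m)\neq(0,0)$ is never actually established --- your proposal only says the inner sum ``should'' evaluate by $q$-Chu--Vandermonde and the outer sum ``should'' collapse. This plan is unlikely to survive contact with the summand. After setting $k=r+s$, the shifted arguments $zq^{2r-s}=zq^{3r-k}$ and $wq^{2s-r}=wq^{2k-3r}$ mean the Pochhammer \emph{parameters} of the inner series move with the summation index $r$ (producing factors like $(zq^{3r-k+1};q)_{n-r}$, i.e.\ Pochhammers of index $3r$), so the inner sum is not a balanced ${}_2\phi_1$ and is not summable by $q$-Chu--Vandermonde. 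This coupling, forced by the $\mathrm{A}_2$ Cartan form, is precisely why the paper has to invoke a genuinely multivariable identity: Gustafson's $\mathrm{A}_{r-1}^{(1)}$ ${}_6\psi_6$ summation \eqref{Eq_Gustafson}, specialised with $(a_1,\dots,a_r)=(q^{-n},c_2,\dots,c_{r-1},1)$ and $(b_1,\dots,b_r)=(q,c_2,\dots,c_{r-1},q^{m+1})$ so that the product side acquires a factor $(1;q)_\infty=0$ while the summand has finite support for $r=3$. Your intermediate simplification is also not quite right: the $(zwq;q)_\bullet$ pieces do not telescope to an $r,s$-independent factor $(zwq;q)_{n+m}$; one gets $(zw;q)_{n+m+1}/(1-zwq^{r+s})$ up to monomials, and the denominators $(zwq^{r+s+1};q)_{n-r}(zwq^{r+s+1};q)_{m-s}$ and their base-$q^{-1}$ counterparts all retain $r,s$-dependence.

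The reserve alternative is also not available: the paper states that Proposition \ref{Prop_inversion} is new, the Bailey pairs of \cite{ASW99} are defined over $Q_{++}$ rather than $Q_{+}$, and in this paper the $\mathrm{A}_2$ Bailey pair inversion \eqref{Eq_BP-A2-inv} is \emph{deduced from} Proposition \ref{Prop_inversion}, so appealing to ``the $\alpha\leftrightarrow\beta$ inversion'' would be circular. To complete the proof you need either Gustafson's summation (or an equivalent $U(3)$ multilateral summation) or an independent verification that the explicit double sum vanishes.
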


This inversion relation, which simplifies to \eqref{Eq_inversion-A1}
for $M=m=0$ or $w=0$, will be applied in Section~\ref{Sec_PS} to prove
Theorem~\ref{Thm_Principal-subspace}.

\begin{proof}[Proof of Proposition~\ref{Prop_inversion}]
Replacing
\[
(n,m,z,w)\mapsto \big(n+N,m+M,zq^{M-2N},wq^{N-2M}\big),
\] 
and then shifting the summation indices $(r,s)\mapsto (r+N,s+M)$, 
it follows that \eqref{Eq_inversion} for general $N,M$ is equivalent
to the $N=M=0$ case.
The proof of \eqref{Eq_inversion} for $N=M=0$ requires Gustafson's
multiple ${_6\psi_6}$ summation \cite[Theorem 1.15]{Gustafson87}
for the affine root system $\mathrm{A}_{r-1}^{(1)}$:
\begin{align}\label{Eq_Gustafson}
\sum_{y\in Q} & \prod_{1\leq i<j\leq r} 
\frac{x_iq^{y_i}-x_jq^{y_j}}{x_i-x_j}
\prod_{i,j=1}^r \frac{(a_jx_i/x_j;q)_{y_i}}
{(b_jx_i/x_j;q)_{y_i}} \\
&=\frac{(Bq^{1-r},q/A;q)_{\infty}}{(q,Bq^{1-r}/A;q)_{\infty}}
\prod_{i,j=1}^r \frac{(qx_i/x_j,x_ib_j/a_ix_j;q)_{\infty}}
{(x_ib_j/x_j,x_iq/a_ix_j;q)_{\infty}}, \notag
\end{align}
where $A:=a_1\cdots a_r$, $B:=b_1\cdots b_r$ and
$\max\{\abs{q},\abs{Bq^{1-r}/A}\}<1$.
Assuming $r\geq 3$ and specialising
\[
(a_1,\dots,a_r)=(q^{-n},c_2,\dots,c_{r-1},1),\quad
(b_1,\dots,b_r)=(q,c_2,\dots,c_{r-1},q^{m+1}),
\]
\eqref{Eq_Gustafson} yields
\[
\sum_{y\in Q} \prod_{1\leq i<j\leq r} 
\frac{x_iq^{y_i}-x_jq^{y_j}}{x_i-x_j}
\prod_{i=1}^r \frac{(q^{-n}x_i/x_1,x_i/x_r;q)_{y_i}}
{(qx_i/x_1,q^{m+1}x_i/x_r;q)_{y_i}}=0
\]
for $\abs{q}<1$ and $n+m>r-3$.
The summand on the left vanishes unless $0\leq y_1\leq n$
and $0\leq -y_r\leq m$.
Since $y\in Q$, this implies that for $r=3$ the summand
has finite support, making the condition $\abs{q}<1$ redundant.
Then replacing $(y_1,y_2,y_3)\mapsto (r,s-r,-s)$ and
$(x_1,x_2,x_3)\mapsto(zw,w,1)$, the identity \eqref{Eq_inversion} for
$N=M=0$ and $(n,m)\neq (0,0)$ follows.
Since the $(n,m)=(0,0)$ case trivially holds, we are done.
\end{proof}

Apart from $\Phi_{n,m}(z,w;q)$ we also need the function
\[
\mathcal{K}_{n,m;r,s}(z,w;q):=
\frac{z^r w^s q^{r^2-rs+s^2}}{(q;q)_{n-r}(q;q)_{m-s}}.
\]
Then the $\mathrm{A}_2$ Bailey lemma of \cite[Theorem 4.3]{ASW99} 
is equivalent to the following reproducing identity for
$\Phi_{n,m}(z,w;q)$, see also \cite[Theorem 5.1]{W06}
or~\cite[Corollary 7.9]{FFJMM09}.

\begin{theorem}[$\mathrm{A}_2$ Bailey chain]\label{Thm_A2-Bailey}
For $n,m$ nonnegative integers,
\begin{equation}\label{Eq_Bailey-A2}
\sum_{r=0}^n \sum_{s=0}^m \mathcal{K}_{n,m;r,s}(z,w;q)
\Phi_{r,s}(z,w;q)=\Phi_{n,m}(z,w;q).
\end{equation}
\end{theorem}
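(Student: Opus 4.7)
The plan is to mirror the proof of the $\mathrm{A}_1$ Bailey chain (Lemma~\ref{Lem_chain}), which reduced to the terminating ${}_1\phi_1$ summation~\eqref{Eq_1phi1}. Since the $\mathrm{A}_2$ identity is a double sum, the idea is to carry out the summations one index at a time, each by a terminating basic hypergeometric identity.

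First I would decouple the indices via $(zwq;q)_{r+s}=(zwq;q)_s(zwq^{s+1};q)_r$ and extract the factors independent of $r$. The inner sum then takes the form
\[
A_n(s):=\sum_{r=0}^{n}\frac{z^r q^{r^2-rs}(zwq^{s+1};q)_r}{(q;q)_r(q;q)_{n-r}(zq,zwq;q)_r},
\]
which, upon rewriting $(q;q)_{n-r}^{-1}=(-1)^r q^{nr-\binom{r}{2}}(q^{-n};q)_r/(q;q)_n$, becomes a terminating ${}_2\phi_2$ in $r$. Applying a Heine-type or Sears-type contiguous transformation I would convert this into a Saalschützian ${}_3\phi_2$ whose closed-form evaluation is given by the $q$-Pfaff--Saalschütz summation, producing $A_n(s)$ as a short product of $q$-Pochhammer symbols in $s$. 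Substituting this product back collapses the remaining $s$-sum into another terminating basic hypergeometric series, summable again by the $q$-Pfaff--Saalschütz theorem (or, after a parameter substitution, by~\eqref{Eq_1phi1}); simplification of the resulting product then reproduces $\Phi_{n,m}(z,w;q)$ on the right.

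The main obstacle is the inner evaluation: the ${}_2\phi_2$ for $A_n(s)$ is not a priori Saalschützian, so the auxiliary transformation that prepares it for $q$-Pfaff--Saalschütz must be chosen carefully and, crucially, must leave a residue in $s$ still of basic hypergeometric type so that the outer sum can in turn be evaluated in closed form. A more robust fallback is to argue by induction on $n$ at fixed $m$: the base case $n=0$ is an $\mathrm{A}_1$-type identity in $s$ that follows directly from~\eqref{Eq_1phi1}, while the inductive step reduces to matching a two-term contiguous relation in $n$ satisfied by both sides of~\eqref{Eq_Bailey-A2}, a relation that is transparent on the right from the explicit product form of $\Phi_{n,m}(z,w;q)$ and on the left from the standard $q$-Pascal rule applied to $1/((q;q)_{n-r})$.
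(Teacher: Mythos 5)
Your general strategy --- peel off the double sum one index at a time using terminating basic hypergeometric identities --- is exactly the spirit of the paper's proof, but the central step of your primary route fails. The inner sum $A_n(s)$ does \emph{not} evaluate in closed form. Writing it in hypergeometric notation one finds
\[
A_n(s)=\frac{1}{(q;q)_n}\,
\qHyp{2}{2}{q^{-n},zwq^{s+1}}{zq,zwq}{q,zq^{n-s+1}},
\]
and the relevant contiguous relation (the limiting case of \cite[Equation~(III.9)]{GR04} used in the paper) turns this into $\frac{1}{(q;q)_n(zq;q)_n}\,{_2\phi_1}(q^{-s},q^{-n};zwq;q,zq^{n+1})$, which is \emph{not} Saalsch\"utzian and is not a product of $q$-shifted factorials. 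A direct check at $n=s=1$ gives the factor $1+z-zq-zwq$ up to normalisation, which does not factor into binomials, so no $q$-Pfaff--Saalsch\"utz (or any other) evaluation can produce "a short product of $q$-Pochhammer symbols in $s$". The correct continuation --- and this is what the paper does, summing over $s$ first rather than $r$, but the two choices are equivalent by the $(n,z,r)\leftrightarrow(m,w,s)$ symmetry --- is to use the ${_2\phi_2}\to{_2\phi_1}$ transformation only to \emph{re-expand} the inner sum, then interchange the order of summation; the new inner sum is a ${_1\phi_1}$ summable by \eqref{Eq_1phi1}, and the remaining single sum is a ${_2\phi_1}$ summable by $q$-Chu--Vandermonde. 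So you need one transformation plus two summations, not two summations.

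Your fallback by induction on $n$ is also not a proof as stated. The right-hand side does satisfy the transparent two-term recurrence $(1-q^n)(1-zq^n)(1-zwq^n)\Phi_{n,m}=(1-zwq^{n+m})\Phi_{n-1,m}$, but the $q$-Pascal rule applied to $1/(q;q)_{n-r}$ on the left gives $\phi_{n,m}(z,w;q)-\phi_{n-1,m}(z,w;q)=q^n\sum_{r,s}q^{-r}\mathcal{K}_{n,m;r,s}(z,w;q)\Phi_{r,s}(z,w;q)$, in which the kernel is rescaled but the function $\Phi_{r,s}(z,w;q)$ inside is not; this is a sum of a genuinely different type (it is in fact the shape appearing in \eqref{Eq_Bailey-A2-uv}), not the same object at $n-1$ or at shifted $z$. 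Matching it against the three-factor recurrence for the right-hand side is essentially as hard as the original identity, so the inductive step is not "transparent" and would need to be supplied.
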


Since
\begin{equation}\label{Eq_wnul} 
\Phi_{n,m}(z,0;q)=\frac{\Phi_n(z;q)}{(q;q)_m}
\quad\text{and}\quad
\mathcal{K}_{n,m;r,s}(z,0;q)=\delta_{s,0}\,
\frac{\mathcal{K}_{n;r}(z;q)}{(q;q)_m},
\end{equation}
Theorem~\ref{Thm_A2-Bailey} for $w=0$ simplifies to Lemma~\ref{Lem_chain}.
It also simplifies to this lemma for $m=0$.
We further remark that \eqref{Eq_Bailey-A2} holds for all integers
$n,m$, with both sides vanishing trivially unless $n,m\geq 0$.
The proof of \eqref{Eq_Bailey-A2} presented below replicates the
second part of the proof of~\cite[Theorem 4.3]{ASW99}.
For an alternative approach using Hall--Littlewood polynomials the
reader is referred to~\cite{W06}.

\begin{proof}
Denote the double sum on the left of \eqref{Eq_Bailey-A2}
by $\phi_{n,m}(z,w;q)$.
Then
\[
\phi_{n,m}(z,w;q)=\frac{1}{(q;q)_m}\sum_{r=0}^n 
\frac{z^r q^{r^2}}{(q;q)_{n-r}(q,zq;q)_r}\,
\qHyp{2}{2}{zwq^{r+1},q^{-m}}{wq,zwq}{q,wq^{m-r+1}}.
\]
By a limiting case of \cite[Equation (III.9)]{GR04},
\[
\qHyp{2}{2}{a,q^{-n}}{b,c}{q,\frac{bcq^n}{a}}=\frac{1}{(c;q)_n}\,
\qHyp{2}{1}{b/a,q^{-n}}{b}{q,cq^n}.
\]
Applying this with $(n,a,b,c)\mapsto(m,zwq^{r+1},zwq,wq)$ yields
\begin{align*}
\phi_{n,m}(z,w;q)&=\frac{1}{(q,wq;q)_m}\sum_{r=0}^n 
\frac{z^r q^{r^2}}{(q;q)_{n-r}(q,zq;q)_r}\,
\qHyp{2}{1}{q^{-r},q^{-m}}{zwq}{q,wq^{m+1}} \\
&=\sum_{s=0}^n \sum_{r=s}^n 
\frac{z^r w^s q^{\binom{r-s}{2}+r^2}}
{(q;q)_{n-r}(q;q)_{m-s}(zq;q)_r(q;q)_{r-s}(q,zwq;q)_s}.
\end{align*}
After shifting $r\mapsto r+s$ this gives
\[
\phi_{n,m}(z,w;q)
=\frac{1}{(wq;q)_m}\sum_{s=0}^n \frac{(wz)^s q^{s^2}}
{(q;q)_{n-s}(q;q)_{m-s}(q,zq,zwq;q)_s}\,
\qHyp{1}{1}{q^{-(n-s)}}{zq^{s+1}}{q,zq^{n+1}}.
\]
Finally, by \eqref{Eq_1phi1} with $(z,n)\mapsto(zq^s,n-s)$,
\[
\phi_{n,m}(z,w;q)=\frac{1}{(q,zq;q)_n(q,wq;q)_m}\,
\qHyp{2}{1}{q^{-n},q^{-m}}{zwq}{q,wzq^{n+m+1}}
=\Phi_{n,m}(z,w;q),
\]
where the final equality follows from the $q$-Chu--Vandermonde
summation~\cite[Equation (II.7)]{GR04}.
\end{proof}

Generalising the proof of Corollary~\ref{Cor_infinite-iteration-A1}
to the rank-two setting in the obvious manner gives the following
multisum representation for $\Phi_{n,m}(z,w;q)$, see also
\cite[Corollary 3.4]{W06}.

\begin{corollary}\label{Cor_infinite-iteration-A2}
We have
\[
\Phi_{n,m}(z,w;q)=\sum_{\la,\mu\in\Par} 
\prod_{i\geq 1} \frac{z^{\la_i} w^{\mu_i} 
q^{\la_i^2-\la_i\mu_i+\mu_i^2}}
{(q;q)_{\la_{i-1}-\la_i}(q;q)_{\mu_{i-1}-\mu_i}},
\]
where $\la_0:=n$ and $\mu_0:=m$.
\end{corollary}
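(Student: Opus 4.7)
The plan is to establish the corollary by direct iteration of the $\mathrm{A}_2$ Bailey chain of Theorem~\ref{Thm_A2-Bailey}, mimicking the proof of Corollary~\ref{Cor_infinite-iteration-A1} in the rank-two setting. A single application of \eqref{Eq_Bailey-A2} expresses $\Phi_{n,m}(z,w;q)$ as a double sum over $(\la_1,\mu_1)$, with summand $\mathcal{K}_{n,m;\la_1,\mu_1}(z,w;q)\,\Phi_{\la_1,\mu_1}(z,w;q)$. Applying \eqref{Eq_Bailey-A2} again to the residual $\Phi_{\la_1,\mu_1}(z,w;q)$ introduces a further sum over $(\la_2,\mu_2)$ along with a factor $\mathcal{K}_{\la_1,\mu_1;\la_2,\mu_2}(z,w;q)$, and iterating $k$ times yields
\[
\Phi_{n,m}(z,w;q)=\sum_{\substack{\la,\mu\in\Par\\[1pt] l(\la),l(\mu)\leq k}}
\Phi_{\la_k,\mu_k}(z,w;q)\prod_{i=1}^k \mathcal{K}_{\la_{i-1},\mu_{i-1};\la_i,\mu_i}(z,w;q),
\]
where $\la_0:=n$ and $\mu_0:=m$. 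The summation ranges in \eqref{Eq_Bailey-A2} enforce $0\leq\la_i\leq\la_{i-1}$ and $0\leq\mu_i\leq\mu_{i-1}$, so the indices assemble into partitions of length at most $k$. Unfolding the definition of $\mathcal{K}$ gives exactly the truncation at $i=k$ of the product in the claim.

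The second step is to let $k\to\infty$. Viewing both sides as elements of the formal power series ring $R[[q]]$ and using $\Phi_{0,0}(z,w;q)=1$, for any fixed pair $(\la,\mu)\in\Par\times\Par$, once $k\geq\max\{l(\la),l(\mu)\}$ the residual factor trivialises and the further indices $(\la_i,\mu_i)=(0,0)$ contribute factors $\mathcal{K}_{0,0;0,0}(z,w;q)=1$. Thus every such pair $(\la,\mu)$ appears exactly once in the limit, weighted by the claimed summand.

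The only real subtlety, and what I expect to be the main (though still routine) point, is summability of the resulting infinite sum in $R[[q]]$. This follows from positive-definiteness of the $\mathrm{A}_2$ quadratic form appearing in \eqref{Eq_Phi-recip}: since $\la_i^2-\la_i\mu_i+\mu_i^2\geq 1$ whenever $(\la_i,\mu_i)\neq(0,0)$, the $q$-adic valuation of the summand indexed by $(\la,\mu)$ is bounded below by the number of indices $i$ for which $(\la_i,\mu_i)\neq(0,0)$. Consequently only finitely many pairs $(\la,\mu)$ contribute to any given power of $q$, which legitimises the interchange of the $k\to\infty$ limit with the summation and delivers the corollary.
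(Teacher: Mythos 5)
Your proposal is correct and follows exactly the paper's route: the paper proves this corollary by iterating the $\mathrm{A}_2$ Bailey chain \eqref{Eq_Bailey-A2} $k$ times and letting $k\to\infty$, precisely as in the proof of Corollary~\ref{Cor_infinite-iteration-A1}. Your extra paragraph justifying the interchange of limit and summation in $R[[q]]$ via positive-definiteness of the $\mathrm{A}_2$ quadratic form is a detail the paper leaves implicit, but it is the right (and routine) justification.
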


Next we will generalise the $\mathrm{A}_1$ Bailey tree of
Lemma~\ref{Lem_A1-tree}.
This requires a suitable $u,v$-generalisation
$\Phi_{n,m}(u,v;z,w;q)$ of $\Phi_{n,m}(z,w;q)$ such that
\begin{subequations}\label{Eq_Phi-uv-special}
\begin{align}\label{Eq_11}
\Phi_{n,m}(1,1;z,w;q)&=\Phi_{n,m}(z,w;q), \\[1mm]
\Phi_{n,m}(z^{-1},w^{-1};z,w;q)&=q^n \Phi_{n,m}(z,w;q)
\label{Eq_11-b}
\end{align}
\end{subequations}
and
\begin{equation}\label{Eq_Phinm_to_Phin}
\Phi_{n,m}(u,v;z,0;q)=\frac{\Phi_n(u;z;q)}{(q;q)_m}.
\end{equation}
We begin by noting that the decomposition \eqref{Eq_Phi-rec}
for $u=1$ follows from the relation $1-z=(1-cz)-z(1-c)$ for $c=q^n$.
This readily generalises to the $4$-term relation
\begin{align*}
&(1-z)(1-w)(1-zw)(1-cdzw) \\
&\qquad =(1-cz)(1-w)(1-czw)(1-dzw) 
-z(1-c)(1-dw)(1-zw)(1-czw) \\
&\qquad\quad +zw^2(1-c)(1-d)(1-z)(1-cz),
\end{align*}
which for $(c,d)\mapsto(q^n,q^m)$ implies
\begin{align}\label{Eq_Phi-three} 
\Phi_{n,m}(z,w;q)&=\Phi_{n,m}(z/q,w;q) 
-\frac{z}{(z;q)_2}\, \Phi_{n-1,m}(zq,w/q;q) \\
&\quad+\frac{zw^2}{(w,zw;q)_2}\,\Phi_{n-1,m-1}(z,wq;q). \notag
\end{align}
Generalising this to include parameters $u$ and $v$, we define
\begin{align}\label{Eq_Phi-three-uv}
\Phi_{n,m}(u,v;z,w;q)&:=\Phi_{n,m}(z/q,w;q) 
-\frac{uz}{(z;q)_2}\, \Phi_{n-1,m}(zq,w/q;q) \\
&\quad+\frac{uvzw^2}{(w,zw;q)_2}\,\Phi_{n-1,m-1}(z,wq;q), \notag
\end{align}
which obviously satisfies \eqref{Eq_11}. 
After clearing denominators, the relation \eqref{Eq_11-b} is a consequence
of the $4$-term relation
\begin{align*}
&c(1-z)(1-w)(1-zw)(1-cdzw) \\
&\qquad =(1-cz)(1-w)(1-czw)(1-dzw)-(1-c)(1-dw)(1-zw)(1-czw) \\
&\qquad\quad +w(1-c)(1-d)(1-z)(1-cz)
\end{align*}
for $(c,d)\mapsto(q^n,q^m)$.
Finally, the relation \eqref{Eq_Phinm_to_Phin} follows from
\eqref{Eq_Phi-rec} and \eqref{Eq_wnul}.
Most importantly, $\Phi_{n,m}(u,v;z,w;q)$ satisfies the following
generalisation of Lemma~\ref{Lem_A1-tree}.

\begin{theorem}[$\mathrm{A}_2$ Bailey tree, part I]
\label{Thm_A2-tree}
For $n,m$ nonnegative integers,
\begin{subequations}
\begin{align}\label{Eq_Bailey-A2-11}
\sum_{r=0}^n \sum_{s=0}^m \mathcal{K}_{n,m;r,s}(z,w;q)
\Phi_{r,s}(1,1;z,w;q)&=\Phi_{n,m}(1,1;z,w;q)
\intertext{and}
\label{Eq_Bailey-A2-uv}
\sum_{r=0}^n \sum_{s=0}^m \mathcal{K}_{n,m;r,s}(z/q,w;q)
\Phi_{r,s}(u,v;z,w;q)&=\Phi_{n,m}(uz,vw;z,w;q).
\end{align}
\end{subequations}
\end{theorem}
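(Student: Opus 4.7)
The first identity \eqref{Eq_Bailey-A2-11} follows immediately from Theorem~\ref{Thm_A2-Bailey} together with the specialisation \eqref{Eq_11}, so only \eqref{Eq_Bailey-A2-uv} requires real work. The plan is to exploit the fact that, by the defining formula \eqref{Eq_Phi-three-uv}, the function $\Phi_{r,s}(u,v;z,w;q)$ is a polynomial in $(u,v)$ of the form $A_{r,s}+uB_{r,s}+uvC_{r,s}$, with no lone $v$ term; the same is true of $\Phi_{n,m}(uz,vw;z,w;q)$ on the right. Both sides of \eqref{Eq_Bailey-A2-uv} are therefore polynomials in $(u,v)$ supported on the monomials $\{1,u,v,uv\}$, and it suffices to match these four coefficients. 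The coefficient of $v$ alone vanishes on both sides automatically, so only three substantive matchings remain.

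The constant coefficient of \eqref{Eq_Bailey-A2-uv} in $(u,v)$ is exactly \eqref{Eq_Bailey-A2} with $z\mapsto z/q$, requiring no further work. For the coefficient of $u$, I would extract the common factor $-z/(z;q)_2$ and then shift $r\mapsto r+1$, reducing the claim to
\[
\sum_{r=0}^{n-1}\sum_{s=0}^{m}
\mathcal{K}_{n,m;r+1,s}(z/q,w;q)\, \Phi_{r,s}(zq,w/q;q)
= z\,\Phi_{n-1,m}(zq,w/q;q).
\]
A routine exponent count yields the scalar shift rule
\[
\mathcal{K}_{n,m;r+1,s}(z/q,w;q) = z\,\mathcal{K}_{n-1,m;r,s}(zq,w/q;q),
\]
whereupon the displayed identity is \eqref{Eq_Bailey-A2} with $(n,z,w)\mapsto(n-1,zq,w/q)$.

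The coefficient of $uv$ is handled in exactly the same fashion: extract the common factor $zw^2/(w,zw;q)_2$, shift $(r,s)\mapsto(r+1,s+1)$, and verify the second shift rule
\[
\mathcal{K}_{n,m;r+1,s+1}(z/q,w;q) = zw\,\mathcal{K}_{n-1,m-1;r,s}(z,wq;q),
\]
which reduces the claim to \eqref{Eq_Bailey-A2} with $(n,m,w)\mapsto(n-1,m-1,wq)$.

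No new $q$-hypergeometric identity enters the argument; the entire proof is driven by the $\mathrm{A}_2$ Bailey chain of Theorem~\ref{Thm_A2-Bailey} together with the two monomial shift identities for $\mathcal{K}_{n,m;r,s}$, both of which are direct exponent-counting exercises. The only genuine care needed is bookkeeping---in particular, confirming that summands with negative index $r-1$ or $s-1$ vanish on account of $1/(q;q)_{-1}=0$, so that the lower summation limits may be shifted consistently throughout.
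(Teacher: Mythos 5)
Your proposal is correct and follows essentially the same route as the paper's own proof: both sides are viewed as polynomials supported on $\{1,u,uv\}$, the constant term is \eqref{Eq_Bailey-A2} with $z\mapsto z/q$, and the $u$ and $uv$ coefficients are reduced to further instances of \eqref{Eq_Bailey-A2} via the same two shift rules for $\mathcal{K}_{n,m;r,s}$ after discarding the vanishing negative-index terms. No substantive difference from the paper's argument.
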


By \eqref{Eq_11} the first claim is of course a restatement of 
the $\mathrm{A}_2$ Bailey chain.
We also remark that for $m=0$ or for $w=0$ the theorem simplifies to
Lemma~\ref{Lem_A1-tree}.

The $\mathrm{A}_2$ Bailey tree as stated can only prove 
Theorems~\ref{Thm_KRistrue} and \ref{Thm_Principal-subspace} for $b=0$
(or, by symmetry, $a=0$) and we also need a Bailey-type
transformation for a four-parameter generalisation of $\Phi_{n,m}(z,w;q)$
involving the function $\mathcal{K}_{n,m;r,s}(z/q,w/q;q)$.
This missing part of the $\mathrm{A}_2$ Bailey tree will be discussed
later.

\begin{proof}[Proof of Theorem~\ref{Thm_A2-tree}]
Each side of \eqref{Eq_Bailey-A2-uv} is a polynomial in $u$ and $v$ of the
form $A+Bu+Cuv$.
As in the $\mathrm{A}_1$ case, the constant term of the identity corresponds
to \eqref{Eq_Bailey-A2} with $z$ replaced by $z/q$.
Next, up to an overall factor of $-z/(z;q)_2$, the coefficient of $u$ in
\eqref{Eq_Bailey-A2-uv} is
\[
\sum_{r=1}^n \sum_{s=0}^m \mathcal{K}_{n,m;r,s}(z/q,w;q)
\Phi_{r-1,s}(zq,w/q;q)=z \Phi_{n-1,m}(zq,w/q;q),
\]
where we have used that $\Phi_{r-1,s}$ vanishes for $r=0$.
Shifting the summation index $r\mapsto r+1$ and using that
\[
\mathcal{K}_{n,m;r+1,s}(z/q,w;q)=z\mathcal{K}_{n-1,m;r,s}(zq,w/q;q),
\]
yields \eqref{Eq_Bailey-A2} with $(n,z,w)$ replaced
by $(n-1,zq,w/q)$.
Finally, up to an overall factor of $zw^2/(z,zw;q)_2$, 
the coefficient of $uv$ in \eqref{Eq_Bailey-A2-uv} is given by
\[
\sum_{r=1}^n \sum_{s=1}^m \mathcal{K}_{n,m;r,s}(z/q,w;q)
\Phi_{r-1,s-1}(z,wq;q)=zw\Phi_{n-1,m-1}(z,wq;q).
\]
After shifting $(r,s)\mapsto (r+1,s+1)$ and using that
\[
\mathcal{K}_{n,m;r+1,s+1}(z/q,w;q)=zw\mathcal{K}_{n-1,m-1;r,s}(z,wq;q),
\]
this yields \eqref{Eq_Bailey-A2} with $(n,m,w)\mapsto(n-1,m-1,wq)$.
\end{proof}

To prove Conjecture~\ref{Con_KR-ASW} we need the
$\mathrm{A}_2$-analogues of Corollaries~\ref{Cor_A1-chain}
and~\ref{Cor_A1-tree}.
Unlike the $\mathrm{A}_1$ case, where we used a single integer 
parameter $y$ to parametrise the $\mathrm{A}_1$ root lattice, 
for $\mathrm{A}_2$ we adopt the notation \eqref{Eq_Q} for $r=3$.
That is, for $y=(y_1,y_2,y_3)\in Q$ and $y_{ij}:=y_i-y_j$, we define
\begin{align}\label{Eq_zwprod}
\Phi_{n,m;y}(z,w;q)
&:=\frac{\Phi_{n-y_1,m-y_1-y_2}(zq^{y_{12}},wq^{y_{23}};q)}
{(zq;q)_{y_{12}}(wq;q)_{y_{23}}(zwq;q)_{y_{13}}} \\
&\hphantom{:}=
\frac{(zwq;q)_{n+m}}{(q;q)_{n-y_1}(zq;q)_{n-y_2}(zwq;q)_{n-y_3}
(q;q)_{m+y_3}(wq;q)_{m+y_2}(zwq;q)_{m+y_1}}. \notag
\end{align}
Clearly, $\Phi_{n,m;(0,0,0)}(z,w;q)=\Phi_{n,m}(z,w;q)$
and $\Phi_{n,m;y}(z,w;q)$ vanishes unless $n-y_1\geq 0$ and
$m+y_3=m-y_1-y_2\geq 0$.
Moreover, $\Phi_{n,m;(y_1,-y_1,0)}(z,0;q)=\Phi_{n;y_1}(z;q)/(q;q)_m$.

\begin{corollary}\label{Cor_A2-chain}
For $n,m\in\mathbb{Z}$ and $y=(y_1,y_2,y_3)\in Q$, 
\begin{equation}\label{Eq_A2-chain}
\sum_{r=y_1}^n \sum_{s=y_1+y_2}^m 
\mathcal{K}_{n,m;r,s}(z,w;q) \Phi_{r,s;y}(z,w;q)=
z^{y_1}w^{y_1+y_2}q^{\frac{1}{2}(y_1^2+y_2^2+y_3^2)}
\Phi_{n,m;y}(z,w;q).
\end{equation}
\end{corollary}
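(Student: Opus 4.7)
The plan is to derive Corollary~\ref{Cor_A2-chain} directly from the $\mathrm{A}_2$ Bailey chain (Theorem~\ref{Thm_A2-Bailey}) by a root-lattice shift, in precise analogy with the way Corollary~\ref{Cor_A1-chain} is obtained from Lemma~\ref{Lem_chain}. Concretely, in \eqref{Eq_Bailey-A2} I would substitute $(n,m,z,w)\mapsto(n-y_1,\,m-y_1-y_2,\,zq^{y_{12}},\,wq^{y_{23}})$ and then reindex the double sum by $(r,s)\mapsto(r-y_1,\,s-y_1-y_2)$, so that the summation bounds become exactly $y_1\leq r\leq n$ and $y_1+y_2\leq s\leq m$.

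By the defining relation \eqref{Eq_zwprod},
\[
\Phi_{r,s;y}(z,w;q)=\frac{\Phi_{r-y_1,\,s-y_1-y_2}(zq^{y_{12}},wq^{y_{23}};q)}
{(zq;q)_{y_{12}}(wq;q)_{y_{23}}(zwq;q)_{y_{13}}},
\]
and the same relation holds with $(r,s)$ replaced by $(n,m)$; since the denominator is independent of the summation variables, it factors out of the sum and cancels on both sides. It therefore suffices to track how the kernel transforms, and the goal is to prove the identity
\[
\mathcal{K}_{n-y_1,\,m-y_1-y_2;\,r-y_1,\,s-y_1-y_2}(zq^{y_{12}},wq^{y_{23}};q)
=z^{-y_1}w^{-y_1-y_2}\,q^{-\frac{1}{2}(y_1^2+y_2^2+y_3^2)}\,\mathcal{K}_{n,m;r,s}(z,w;q).
\]
Once this is established, moving the scalar prefactor to the right-hand side of the shifted Bailey chain yields exactly the claimed normalisation $z^{y_1}w^{y_1+y_2}q^{\frac{1}{2}(y_1^2+y_2^2+y_3^2)}$ on $\Phi_{n,m;y}(z,w;q)$.

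The only genuine computation is the transformation of $\mathcal{K}$, which is routine but finicky. Setting $a=r-y_1$ and $b=s-y_1-y_2$ and expanding $(a+y_1)^2-(a+y_1)(b+y_1+y_2)+(b+y_1+y_2)^2$, one finds
\[
r^2-rs+s^2=a^2-ab+b^2+y_{12}\,a+y_{23}\,b+y_1^2+y_1y_2+y_2^2.
\]
Combining this with the exponents $y_{12}a+y_{23}b$ contributed by the factor $(zq^{y_{12}})^a(wq^{y_{23}})^b$, all $a$-- and $b$--dependent terms cancel and one is left with the constant $-(y_1^2+y_1y_2+y_2^2)$, which equals $-\tfrac{1}{2}(y_1^2+y_2^2+y_3^2)$ after invoking $y_3=-y_1-y_2$. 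I anticipate no serious obstacle: the edge case $n<y_1$ or $m<y_1+y_2$, in which the intermediate substitution is not admissible, is handled by the observation that both sides vanish identically, since $\Phi_{n,m;y}$ contains $1/(q;q)_{n-y_1}$ and $1/(q;q)_{m+y_3}$ in its denominator and the double sum is empty.
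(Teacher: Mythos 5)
Your proposal is correct and follows essentially the same route as the paper: substitute $(n,m,z,w)\mapsto(n-y_1,m-y_1-y_2,zq^{y_{12}},wq^{y_{23}})$ in \eqref{Eq_Bailey-A2}, reindex the sums, and invoke the kernel shift identity (which is exactly \eqref{Eq_shiftK}) together with the definition \eqref{Eq_zwprod}. Your explicit verification of the exponent bookkeeping and of the degenerate case $n<y_1$ or $m<y_1+y_2$ is sound and, if anything, slightly more detailed than the paper's.
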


\begin{proof}
Replacing  
\begin{equation}\label{Eq_nmzw}
(n,m,z,w)\mapsto (n-y_1,m-y_1-y_2,zq^{y_{12}},wq^{y_{23}})
\end{equation}
in \eqref{Eq_Bailey-A2}, shifting the summation indices
$(r,s)\mapsto (r-y_1,m-y_1-y_2)$ and using 
\begin{equation}\label{Eq_shiftK}
\mathcal{K}_{n-y_1,m-y_1-y_2;r-y_1,s-y_1-y_2}
(zq^{y_{12}},wq^{y_{23}};q)
=z^{-y_1}w^{-y_1-y_2}q^{-\frac{1}{2}(y_1^2+y_2^2+y_3^2)}
\mathcal{K}_{n,m;r,s}(z,w;q)
\end{equation}
as well as definition \eqref{Eq_zwprod}, the claim follows.
\end{proof}

In much the same way we define
\begin{equation}\label{Eq_Phi-nmy-uvzw}
\Phi_{n,m;y}(u,v;z,w;q):=
\frac{\Phi_{n-y_1,m-y_1-y_2}(u,v;zq^{y_{12}},wq^{y_{23}};q)}
{(zq;q)_{y_{12}}(wq;q)_{y_{23}}(zwq;q)_{y_{13}}},
\end{equation}
so that $\Phi_{n,m;(y_1,-y_1,0)}(u,v;z,0;q)=\Phi_{n;y_1}(u;z;q)/(q;q)_m$.
Equation \eqref{Eq_11} implies the simplification
\begin{equation}\label{Eq_Phi-nmy-11zw}
\Phi_{n,m;y}(1,1;z,w;q)=\Phi_{n,m;y}(z,w;q),
\end{equation}
which yields the first of the identities in the next lemma.
The second result follows in an analogous manner as
Corollary~\ref{Cor_A2-chain}, and we omit the proof.

\begin{corollary}\label{Cor_A2-tree}
For $n,m\in\mathbb{Z}$ and $y=(y_1,y_2,y_3)\in Q$,
\begin{subequations}\label{Eq_Bailey-A2-y}
\begin{align}\label{Eq_Bailey-A2-11-Phi}
&\sum_{r=y_1}^n \sum_{s=y_1+y_2}^m
\mathcal{K}_{n,m;r,s}(z,w;q)\Phi_{r,s;y}(1,1;z,w;q) \\
&\qquad\qquad = 
z^{y_1}w^{y_1+y_2}q^{\frac{1}{2}(y_1^2+y_2^2+y_3^2)}
\Phi_{n,m;y}(1,1;z,w;q) \notag
\intertext{and}
&\sum_{r=y_1}^n \sum_{s=y_1+y_2}^m \mathcal{K}_{n,m;r,s}(z/q,w;q)
\Phi_{r,s;y}(u,v;z,w;q) \label{Eq_Bailey-A2-uv-Phi} \\
&\qquad\qquad = 
z^{y_1}w^{y_1+y_2}q^{\frac{1}{2}(y_1^2+y_2^2+y_3^2)-y_1}
\Phi_{n,m;y}(uzq^{y_{12}},vwq^{y_{23}};z,w;q). \notag
\end{align}
\end{subequations}
\end{corollary}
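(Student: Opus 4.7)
The plan is to deduce both identities by specialising and re-indexing the identities of Theorem~\ref{Thm_A2-tree}, in direct analogy with the deduction of Corollary~\ref{Cor_A2-chain} from \eqref{Eq_Bailey-A2}. Since \eqref{Eq_Bailey-A2-11-Phi} is already covered by Corollary~\ref{Cor_A2-chain} combined with the identity \eqref{Eq_Phi-nmy-11zw}, the only work is \eqref{Eq_Bailey-A2-uv-Phi}, and I would begin by emphasising this reduction and then treat the $(u,v)$ version in parallel.

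First I would take \eqref{Eq_Bailey-A2-uv} and perform the change of variables \eqref{Eq_nmzw}, namely $(n,m,z,w)\mapsto(n-y_1,m-y_1-y_2,zq^{y_{12}},wq^{y_{23}})$, so that $z/q$ turns into $zq^{y_{12}-1}$ while $w$ becomes $wq^{y_{23}}$. After the shift $(r,s)\mapsto(r-y_1,s-y_1-y_2)$, the summation limits become $y_1\leq r\leq n$ and $y_1+y_2\leq s\leq m$, as required by the statement. The $\mathcal{K}$-factor transforms via the analogue of \eqref{Eq_shiftK} with $z$ replaced by $z/q$, which picks up an extra factor $q^{y_1}$ compared with \eqref{Eq_shiftK}, producing the total prefactor $z^{-y_1}w^{-y_1-y_2}q^{-\frac12(y_1^2+y_2^2+y_3^2)+y_1}$ on the $\mathcal{K}$-side; this accounts for the exponent $\frac12(y_1^2+y_2^2+y_3^2)-y_1$ on the right-hand side of \eqref{Eq_Bailey-A2-uv-Phi}.

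The remaining step is to convert the shifted $\Phi_{n-y_1,m-y_1-y_2}(u,v;zq^{y_{12}},wq^{y_{23}};q)$ appearing on both sides into $\Phi_{n,m;y}(u,v;z,w;q)$ and $\Phi_{n,m;y}(uzq^{y_{12}},vwq^{y_{23}};z,w;q)$, respectively, using the definition \eqref{Eq_Phi-nmy-uvzw}. Since the relating factor $(zq;q)_{y_{12}}(wq;q)_{y_{23}}(zwq;q)_{y_{13}}$ is the same on both sides of the identity, it cancels cleanly and we land on \eqref{Eq_Bailey-A2-uv-Phi}. For \eqref{Eq_Bailey-A2-11-Phi}, the identical procedure applied to \eqref{Eq_Bailey-A2-11} (or equivalently to Corollary~\ref{Cor_A2-chain}) together with \eqref{Eq_Phi-nmy-11zw} yields the claim immediately.

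There is no serious obstacle here: the whole argument is bookkeeping with $q$-shifted Pochhammer symbols, and the only place where any care is needed is tracking the extra power of $q$ produced by replacing $z$ by $z/q$ in \eqref{Eq_shiftK}, which is precisely the mechanism that generates the $-y_1$ in the exponent $\frac12(y_1^2+y_2^2+y_3^2)-y_1$ distinguishing \eqref{Eq_Bailey-A2-uv-Phi} from \eqref{Eq_Bailey-A2-11-Phi}.
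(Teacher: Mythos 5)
Your proposal is correct and follows exactly the route the paper intends: the first identity is Corollary~\ref{Cor_A2-chain} combined with \eqref{Eq_Phi-nmy-11zw}, and the second is obtained by applying the substitution \eqref{Eq_nmzw} and the index shift to \eqref{Eq_Bailey-A2-uv}, with the extra $q^{y_1}$ from replacing $z$ by $z/q$ in \eqref{Eq_shiftK} producing the $-y_1$ in the exponent. The paper omits this computation, stating only that it is analogous to the proof of Corollary~\ref{Cor_A2-chain}, and your bookkeeping fills in precisely that omitted argument.
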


\medskip

As mentioned previously, our $\mathrm{A}_2$ Bailey tree is not yet 
complete.
Conjecture~\ref{Con_KR-ASW} and Theorem~\ref{Thm_Principal-subspace}
contain three integer parameters $a,b$ and $k$.
Theorem~\ref{Thm_A2-tree}, however, is restricted to paths along the
Bailey tree of the form shown on page~\pageref{page_path}.
Since such paths can be described by two parameters, something is
still missing.
The reason for deferring the treatment of the missing part of the
$\mathrm{A}_2$ Bailey tree till now is that it uses most of the
previously-defined functions and is less intuitive than what has
been discussed so far.

For $n,m\in\mathbb{Z}$ and $\rho:=(1,2,3)$, define
\begin{align}\label{Eq_Phiuvcd}
&\Phi_{n,m}(u,v;c,d;z,w;q) \\
&:=\sum_{\sigma\in S_3} 
\sgn(\sigma)\,(uz)^{\sigma_1-1} \Big(\frac{v}{d}\Big)^{\chi(\sigma_3=1)} 
\Big(\frac{c}{u}\Big)^{\chi(\sigma_1=3)} (dw)^{3-\sigma_3} 
\Phi_{n,m;\sigma-\rho}(z/q,w/q;q). \notag
\end{align}
Since the summand contains the factors $(q;q)_{n-\sigma_1+1}$ and
$(q;q)_{m+\sigma_3-3}$ in the denominator, the function
$\Phi_{n,m}(u,v;c,d;z,w;q)$ vanishes unless $n,m\geq 0$. 
If $n=m=0$ then only the term $\sigma=\rho$ contributes to the sum so
that $\Phi_{0,0}(u,v;c,d;z,w;q)=1$.
By replacing $\sigma=(\sigma_1,\sigma_2,\sigma_3)\mapsto 
(4-\sigma_3,4-\sigma_2,4-\sigma_1)$, and using that
$\Phi_{n,m;(y_1,y_2,y_3)}(z,w;q)=\Phi_{m,n;-(y_3,y_2,y_1)}(w,z;q)$
and $\sigma_1+\sigma_2+\sigma_3=6$, it may also be seen that
\begin{equation}\label{Eq_symmetry}
\Phi_{n,m}(u,v;c,d;z,w;q)=\Phi_{m,n}(d,c;v,u;w,z;q).
\end{equation}

Before proving a number of important properties of 
$\Phi_{n,m}(u,v;c,d;z,w;q)$, including a Bailey-type transformation,
we remark that in the $n,m\to\infty$ limit an important special case
of this function is essentially a Schur function~\cite{Macdonald95}.

\begin{lemma}\label{Lem_nmlimit}
Let $z:=x_1/x_2$, $w:=x_2/x_3$ and for $a,b$ nonnegative integers,
let $\nu:=(a+b+2,b+1,0)$.
Then
\begin{equation}\label{Eq_nmlimit}
\lim_{n,m\to\infty}\Phi_{n,m}(z^a,w^a;z^b,w^b;z,w;q)=
\frac{1}{(q,q,z,w,zw/q;q)_{\infty}}\,
\det_{1\leq i,j\leq 3}\big(x_i^{\nu_i-\nu_j}\big)
\end{equation}
\end{lemma}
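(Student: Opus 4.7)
The plan is to take the $n,m\to\infty$ limit termwise in the six-term $S_3$ expansion \eqref{Eq_Phiuvcd}. The key observation is that the limit of $\Phi_{n,m;y}(z/q,w/q;q)$ is independent of $y\in Q$, so the resulting $q$-series prefactor simply factors out and only a finite combinatorial identity remains to verify.

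First I would compute $\lim_{n,m\to\infty}\Phi_{n,m;y}(z/q,w/q;q)$ using the product expression on the second line of \eqref{Eq_zwprod}. After the substitutions $z\mapsto z/q$, $w\mapsto w/q$, each of the seven Pochhammer symbols there has the form $(\ast;q)_{n\pm y_i}$, $(\ast;q)_{m\pm y_i}$, or $(zw/q;q)_{n+m}$, all of which converge coefficient-wise in $q$ to the corresponding $(\ast;q)_\infty$, independently of $y$. After cancelling one of the two $(zw/q;q)_\infty$ in the denominator against the single copy from the numerator, one obtains
\[
\lim_{n,m\to\infty}\Phi_{n,m;y}(z/q,w/q;q) = \frac{1}{(q,q,z,w,zw/q;q)_\infty}
\]
for every $y\in Q$.

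Pulling this common factor out of the finite $S_3$ sum in \eqref{Eq_Phiuvcd} and specialising $(u,v,c,d)=(z^a,w^a,z^b,w^b)$, the summand for $\sigma\in S_3$ reduces to the monomial $\sgn(\sigma)\,z^{e(\sigma_1)}w^{f(\sigma_3)}$ with
\[
e(1)=0,\quad e(2)=a+1,\quad e(3)=a+b+2,\qquad f(1)=a+b+2,\quad f(2)=b+1,\quad f(3)=0,
\]
so \eqref{Eq_nmlimit} reduces to the polynomial identity
\[
\sum_{\sigma\in S_3}\sgn(\sigma)\,z^{e(\sigma_1)}w^{f(\sigma_3)}
=\det_{1\leq i,j\leq 3}\bigl(x_i^{\nu_i-\nu_j}\bigr),
\]
where $z=x_1/x_2$, $w=x_2/x_3$ and $\nu=(a+b+2,b+1,0)$.

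Finally, I would verify this identity by the Leibniz expansion of the determinant: each of its six summands $\sgn(\sigma)\prod_i x_i^{\nu_i-\nu_{\sigma(i)}}$ has exponents summing to zero and is therefore a monomial in the ratios $x_1/x_2=z$ and $x_2/x_3=w$, and writing out the six terms one checks they coincide (with signs) with those on the left. The only substantive point in the proof is the $y$-independence of the first limit, which is what allows the six summands of \eqref{Eq_Phiuvcd} to collapse to a single common prefactor times a finite sum; once that is noted, the remainder is routine bookkeeping.
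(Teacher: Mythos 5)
Your proposal is correct and follows essentially the same route as the paper's proof: one observes that $\lim_{n,m\to\infty}\Phi_{n,m;\sigma-\rho}(z/q,w/q;q)=1/(q,q,z,w,zw/q;q)_{\infty}$ independently of $\sigma$, pulls this common factor out of the finite $S_3$ sum in \eqref{Eq_Phiuvcd}, and identifies the remaining monomial sum with the Leibniz expansion of $\det(x_i^{\nu_i-\nu_j})$. Your exponent bookkeeping $e(\sigma_1)=\nu_1-\nu_{\sigma_1}$, $f(\sigma_3)=\nu_{\sigma_3}$ matches the paper's computation exactly.
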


\begin{proof}
The large-$n,m$ limit of $\Phi_{n,m;\sigma-\rho}(z/q,w/q;q)$
gives the infinite product on the right of \eqref{Eq_nmlimit}.
Moreover, for $(u,v,c,d)=(z^a,w^a;z^b,w^b)$ the sum over $S_3$ in
the definition of \eqref{Eq_Phiuvcd} becomes
\[
\sum_{\sigma\in S_3} 
\sgn(\sigma)\,z^{(a+1)(\sigma_1-1)-(a-b)\chi(\sigma_1=3)} 
w^{(a-b)\chi(\sigma_3=1)-(b+1)(\sigma_3-3)}
=\sum_{\sigma\in S_3} 
\sgn(\sigma)\,\prod_{i=1}^3 x_i^{\nu_i-\nu_{\sigma_i}},
\]
which is the determinant in the numerator.
\end{proof}

Unlike $\Phi_{n,m}(u,v;z,w;q)$, which simplifies to
$\Phi_{n,m}(z,w;q)$ for $u=v=1$, the function
$\Phi_{n,m}(u,v;c,d;z,w;q)$ for $c=d=1$ does
not simplify to $\Phi_{n,m}(u,v;z,w;q)$.
Instead a simple linear combination of
$\Phi_{n,m}(u,v;z,w;q)$ and
$\Phi_{n,m}(u/z,v/w;z,w;q)$ arises.

\begin{lemma}\label{Lem_cd1}
For $n,m\in\mathbb{Z}$,
\begin{equation}\label{Eq_cd1}
\Phi_{n,m}(u,v;1,1;z,w;q) \\
=\frac{\Phi_{n,m}(u,v;z,w;q)
-zwq^{m-1}\Phi_{n,m}(u/z,v/w;z,w;q)}{1-zwq^{-1}}.
\end{equation}
\end{lemma}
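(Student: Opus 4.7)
The plan is to observe that both sides of \eqref{Eq_cd1} are polynomials in $u$ and $v$ of the restricted form $\alpha+\beta u+\gamma uv$, and to verify the identity by matching the three coefficients. For the right-hand side this is immediate from the definition \eqref{Eq_Phi-three-uv} of $\Phi_{n,m}(u,v;z,w;q)$. For the left-hand side, inspection of the six $S_3$-terms in \eqref{Eq_Phiuvcd} at $c=d=1$ shows that the contributions of the permutations $(1,2,3),(2,1,3),(1,3,2),(2,3,1),(3,1,2),(3,2,1)$ carry the monomials $1,u,1,uv,u,uv$ respectively, so no pure-$v$, $u^2$ or $v^2$ term appears.

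Writing $\phi_y:=\Phi_{n,m;y}(z/q,w/q;q)$ and denoting the three summands of \eqref{Eq_Phi-three-uv} by $A,B,C$ so that $\Phi_{n,m}(u,v;z,w;q)=A+Bu+Cuv$ and $\Phi_{n,m}(u/z,v/w;z,w;q)=A+(B/z)u+(C/(zw))uv$, equating coefficients of $1$, $u$ and $uv$ in \eqref{Eq_cd1} reduces the claim to the three scalar identities
\begin{align*}
\phi_{(0,0,0)}-w\phi_{(0,1,-1)}&=\frac{1-zwq^{m-1}}{1-zwq^{-1}}\,A,\\[1pt]
-z\phi_{(1,-1,0)}+z^2w\,\phi_{(2,-1,-1)}&=\frac{1-wq^{m-1}}{1-zwq^{-1}}\,B,\\[1pt]
zw^2\phi_{(1,1,-2)}-z^2w^2\phi_{(2,0,-2)}&=\frac{1-q^{m-1}}{1-zwq^{-1}}\,C.
\end{align*}

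Each of these identities is verified by a direct $q$-Pochhammer calculation: using the explicit product form \eqref{Eq_zwprod} for $\phi_y$ (specialised at $z/q,w/q$) together with the explicit form of $A,B,C$, one cancels common factors and is left with an instance of the elementary $3$-parameter identity
\[
(1-ab)(1-bc)-b(1-a)(1-c)=(1-b)(1-abc).
\]
For the constant-term identity one applies this with $(a,b,c)=(zq^{n-1},w,q^m)$, producing the factor $(1-w)(1-zwq^{n+m-1})$ that matches the ratio of $A$ to $\phi_{(0,0,0)}$ after accounting for the factors $(1-zwq^{-1})$ and $(1-zwq^{m-1})$. The coefficient of $u$ is handled analogously by substituting $(a,b,c)=(q^{n-1},zw,q^m)$, and the coefficient of $uv$ by a similar substitution.

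The only real obstacle is bookkeeping: each $\phi_y$ carries six $q$-Pochhammer factors whose index shifts depend on the components of $y$, and these must be tracked carefully against the shifts in $A$, $B$, $C$ to see which $q$-factorials cancel. Once this setup is in place each of the three scalar identities collapses to a short algebraic check, and no additional input beyond the definitions \eqref{Eq_Phi-three-uv}, \eqref{Eq_zwprod} and \eqref{Eq_Phiuvcd} is required.
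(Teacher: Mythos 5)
Your proposal is correct and follows essentially the same route as the paper: both sides are polynomials of the form $\alpha+\beta u+\gamma uv$, and equating the three coefficients reduces the lemma to exactly the three scalar Pochhammer identities the paper records (yours differ only by the normalising factors $-z/(z;q)_2$ and $zw^2/(w,zw;q)_2$). The only addition is that you make explicit the elementary three-parameter identity $(1-ab)(1-bc)-b(1-a)(1-c)=(1-b)(1-abc)$ underlying the verification, which the paper leaves as ``readily verified''; your substitution $(a,b,c)=(zq^{n-1},w,q^m)$ for the constant term checks out.
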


By \eqref{Eq_Phi-uv-special}, the $u=v=1$ case of the lemma simplifies to
\begin{equation}\label{Eq_Phi-nm-1111zw}
\Phi_{n,m}(1,1;1,1;z,w;q)
=\frac{1-zwq^{n+m-1}}{1-zwq^{-1}}\, \Phi_{n,m}(z,w;q).
\end{equation}

\begin{proof}
Both sides of \eqref{Eq_cd1} are polynomials in $u$ and $v$.
Equating like coefficients using \eqref{Eq_Phi-three-uv}, the
claim splits into three separate equations.
After normalisation these are
\[
\frac{1-zwq^{m-1}}{1-zwq^{-1}} \,\Phi_{n,m}(z/q,w;q)
=\Phi_{n,m;(0,0,0)}(z/q,w/q;q)-w\Phi_{n,m;(0,1,-1)}(z/q,w/q;q),
\]
\begin{align*}
&\frac{1-wq^{m-1}}{1-zwq^{-1}} \,\Phi_{n-1,m}(zq,w/q;q) \\
&\qquad=(z;q)_2 \Big(\Phi_{n,m;(1,-1,0)}(z/q,w/q;q)-zw
\Phi_{n,m;(2,-1,-1)}(z/q,w/q;q)\Big),
\intertext{and}
&\frac{1-q^{m-1}}{1-zwq^{-1}} \,\Phi_{n-1,m-1}(z,wq;q) \\
&\qquad=(w,zw;q)_2\Big(
\Phi_{n,m;(1,1,-2)}(z/q,w/q;q)
-z\Phi_{n,m;(2,0,-2)}(z/q,w/q;q)\Big),
\end{align*}
corresponding to the coefficients of $u^0v^0$, $u^1v^0$ and $u^1v^1$
respectively.
By the definitions of $\Phi_{n,m}(z,w;q)$ and $\Phi_{n,m;y}(z,w;q)$
given in \eqref{Eq_Phi} and \eqref{Eq_zwprod}, all three equations
are readily verified.
\end{proof}

The missing part of the $\mathrm{A}_2$ Bailey tree can now be
stated as follows.

\begin{theorem}[$\mathrm{A}_2$ Bailey tree, part II]
\label{Thm_A2-tree-2}
For $n,m$ nonnegative integers,
\begin{equation}\label{Eq_tree-2}
\sum_{r=0}^n \sum_{s=0}^m \mathcal{K}_{n,m;r,s}(z/q,w/q;q)
\Phi_{r,s}(u,v;c,d;z,w;q)=\Phi_{n,m}(uz,vw;cz,dw;z,w;q).
\end{equation}
\end{theorem}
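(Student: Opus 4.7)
The plan is to extend the polynomial-matching argument used to prove Theorem~\ref{Thm_A2-tree}. First I would expand $\Phi_{r,s}(u,v;c,d;z,w;q)$, as defined in \eqref{Eq_Phiuvcd}, explicitly as a polynomial in $u,v,c,d$. Although the definition is a sum over $S_3$ with six terms, the sign factor $\sgn(\sigma)$ together with the prefactor $(uz)^{\sigma_1-1}(v/d)^{\chi(\sigma_3=1)}(c/u)^{\chi(\sigma_1=3)}(dw)^{3-\sigma_3}$ contributes six \emph{distinct} monomials in $(u,v,c,d)$, namely $1$, $uz$, $dw$, $uvzdw^2$, $ucz^2dw$, $uvcz^2dw^2$, each multiplying a unique $\Phi_{r,s;y}(z/q,w/q;q)$ with $y$ running over
\[
\mathcal{S}:=\{\sigma-\rho:\sigma\in S_3\}=\{(0,0,0),(1,-1,0),(0,1,-1),(1,1,-2),(2,-1,-1),(2,0,-2)\}.
\]

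Next I would substitute $(u,v,c,d)\mapsto(uz,vw,cz,dw)$ in this expansion to obtain the right-hand side of \eqref{Eq_tree-2} as a polynomial in the same six monomials, now with extra scaling factors of $z$ and $w$ collected out front. Equating coefficients of each monomial separately reduces \eqref{Eq_tree-2} to six identities, one per $y\in\mathcal{S}$. The $y=(0,0,0)$ case is precisely the $\mathrm{A}_2$ Bailey chain (Theorem~\ref{Thm_A2-Bailey}) with $(z,w)\mapsto(z/q,w/q)$, while each of the other five cases is Corollary~\ref{Cor_A2-chain} with the same substitution and the indicated $y\in\mathcal{S}\setminus\{(0,0,0)\}$:
\[
\sum_{r,s} \mathcal{K}_{n,m;r,s}(z/q,w/q;q)\,\Phi_{r,s;y}(z/q,w/q;q) =(z/q)^{y_1}(w/q)^{y_1+y_2}q^{\frac{1}{2}(y_1^2+y_2^2+y_3^2)}\Phi_{n,m;y}(z/q,w/q;q).
\]

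The only remaining task is a routine check that, for each of the five non-trivial $y\in\mathcal{S}$, the prefactor $(z/q)^{y_1}(w/q)^{y_1+y_2}q^{\frac{1}{2}(y_1^2+y_2^2+y_3^2)}$ delivered by Corollary~\ref{Cor_A2-chain} matches the $z,w$-factor produced by the substitution $(u,v,c,d)\mapsto(uz,vw,cz,dw)$ on the right-hand side. A short case-by-case verification (for example $y=(1,1,-2)$ gives $zw^2$ on both sides, $y=(2,0,-2)$ gives $z^2w^2$, and so on) confirms agreement in every case.

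The main obstacle is conceptual rather than technical: it lies in recognising that the rather opaque definition \eqref{Eq_Phiuvcd} has been engineered precisely so that its six $S_3$-terms, when regarded as monomials in $(u,v,c,d)$, pair up with the six root-lattice vectors $\sigma-\rho\in\mathcal{S}$ in a way that makes the $z,w,q$-prefactors produced by Corollary~\ref{Cor_A2-chain} align exactly with the scaling $(u,v,c,d)\mapsto(uz,vw,cz,dw)$ on the right-hand side. Once this structural point is identified, the proof reduces to a bookkeeping exercise directly parallel to that of Theorem~\ref{Thm_A2-tree}, with the constant term handled by the $\mathrm{A}_2$ Bailey chain and the five remaining monomial coefficients handled by five shifted instances of Corollary~\ref{Cor_A2-chain}.
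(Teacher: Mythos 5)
Your proposal is correct and is essentially the paper's own proof: both decompose $\Phi_{r,s}(u,v;c,d;z,w;q)$ via \eqref{Eq_Phiuvcd} into its six $S_3$-terms and apply Corollary~\ref{Cor_A2-chain} with $(z,w)\mapsto(z/q,w/q)$ and $y=\sigma-\rho$ to each (the lower summation bounds being immaterial since $\Phi_{r,s;\sigma-\rho}$ vanishes below them), then recollect the resulting prefactors $z^{\sigma_1-1}w^{3-\sigma_3}$ into the substitution $(u,v,c,d)\mapsto(uz,vw,cz,dw)$. The only cosmetic difference is that you phrase the reduction as equating coefficients of the six distinct monomials in $(u,v,c,d)$, whereas the paper simply interchanges the sum over $\sigma$ with the sums over $r,s$ and evaluates directly.
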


Once again consider the tree on page~\pageref{page_tree}.
In view of Lemma~\ref{Lem_cd1}, we can first apply the Bailey
tree of Theorem~\ref{Thm_A2-tree}, starting at the root
and taking $k-a$ south-west steps followed by $a-b$ south-east steps.
This gives the same path along the tree as shown on
page~\pageref{page_path} but with $(k,a)\mapsto (k-b,a-b)$,
so that the final vertex is labelled $k-b$.
Next we can repeat the above but with $a$ replaced by $a-1$,
resulting in the path along the tree shown on
page~\pageref{page_path} with $(k,a)\mapsto (k-b,a-b-1)$,
so that the final vertex is once again labelled $k-b$.
As a third and final step we can then take a linear combination
of the pair of identities represented by the two vertices
labelled $k-b$ and take a further $b$ steps using part II of 
the Bailey tree.
If we think of south-east steps as unit steps in $\mathbb{R}^3$
in the positive $x$-direction, south-west steps as steps in the
positive $y$-direction and the final $b$ steps as steps in the positive
$z$-direction, the above procedure can be represented by the 
three-dimensional diagram

\tikzmath{\x=0.866;}
\tikzmath{\y=0.5;}
\begin{center}
\begin{tikzpicture}[scale=0.45,line width=0.3pt]
\draw[blue] (0,0)--(-2*\x,-2*\y);
\draw[dashed,blue] (-1.8*\x,-1.8*\y)--(-3.8*\x,-3.8*\y);
\draw[blue] (-4*\x,-4*\y)--(-5*\x,-5*\y)--(-4*\x,-6*\y);
\draw[violet] (-5*\x,-5*\y)--(-6*\x,-6*\y)--(-5*\x,-7*\y);
\draw[dashed,blue] (-3.8*\x,-6.2*\y)--(-1.2*\x,-8.8*\y);
\draw[dashed,violet] (-4.8*\x,-7.2*\y)--(-2.2*\x,-9.8*\y);
\draw[blue] (-1*\x,-9*\y)--(1*\x,-11*\y);
\draw[violet] (-2*\x,-10*\y)--(-1*\x,-11*\y);
\foreach \z in {0,-1,-2,-4,-5} {\filldraw[blue] (\z*\x,\z*\y) circle (0.75mm);}
\foreach \z in {-6,-9,-10,-11} {\filldraw[blue] (-10*\x-\z*\x,\z*\y) circle (0.75mm);}
\foreach \z in {-6,-7,-10,-11} {\filldraw[violet] (-12*\x-\z*\x,\z*\y) circle (0.75mm);}
\draw[red] (0,0.5) node {$\scriptstyle 0$};
\draw[red] (-\x,-\y+0.5) node {$\scriptstyle 1$};
\draw[red] (-2*\x,-2*\y+0.5) node {$\scriptstyle 2$};
\draw[red] (-5*\x-0.7,-5*\y+0.2) node {$\scriptstyle k-a$};
\draw[red] (-6*\x-1,-6*\y) node {$\scriptstyle k-a+1$};
\draw[red] (2.2,-11*\y) node {$\scriptstyle k-b$};
\draw[rotate=0] (0,-11*\y) ellipse (1.6 and 0.3);
\draw[black] (-0.14,-11*\y)--(-0.14,-9*\y);
\draw[black] (-0.14,-5*\y)--(-0.14,-3*\y);
\draw[dashed,black] (-0.14,-8.8*\y)--(-0.14,-5.2*\y);
\foreach \z in {-11,-9,-5,-3} {\filldraw[black] (-0.14,\z*\y) circle (0.75mm);}
\draw[red] (0.3,-3*\y) node {$\scriptstyle k$};
\draw[red] (0.45,-5*\y) node {$\scriptstyle k-1$};
\draw[thick,->] (6,-3)--(6,-2);
\draw[thick,->] (6,-3)--(6-\x,-3-\y);
\draw[thick,->] (6,-3)--(6+\x,-3-\y);
\draw (6,-1.5) node {$z$};
\draw (6-1.4*\x,-3-1.4*\y) node {$x$};
\draw (6+1.4*\x,-3-1.4*\y) node {$y$};
\end{tikzpicture}
\end{center}
\label{page_sketch}
where the central black vertex in the encircled region represents the
appropriate linear combination of the violet and blue vertices labelled
$k-b$.

\begin{proof}[Proof of Theorem~\ref{Thm_A2-tree-2}]
Denote the left-hand side of \eqref{Eq_tree-2} by $\phi_{n,m}$.
By \eqref{Eq_Phiuvcd} and an interchange in the order of the
sums over $r,s$ and over $\sigma$,
\begin{align*}
\phi_{n,m}&=\sum_{\sigma\in S_3} \bigg( 
\sgn(\sigma)\,(uz)^{\sigma_1-1} (v/d)^{\chi(\sigma_3=1)} 
(c/u)^{\chi(\sigma_1=3)} (dw)^{3-\sigma_3} \\
& \qquad\qquad \times
\sum_{r=0}^n \sum_{s=0}^m \mathcal{K}_{n,m;r,s}(z/q,w/q;q)
\Phi_{r,s;\sigma-\rho}(z/q,w/q;q) \bigg).
\end{align*}
We now use that
$\Phi_{r,s;\sigma-\rho}(z/q,w/q;q)=0$
unless $r-\sigma_1+1\geq 0$ and $s+\sigma_3-3\geq 0$
to change the lower bounds on the sums over $r$ and $s$ to
$\sigma_1-1$ and $3-\sigma_3$ respectively.
Since Corollary~\ref{Cor_A2-chain} for $y=\sigma-\rho$ simplifies to
\[
\sum_{r=\sigma_1-1}^n \sum_{s=3-\sigma_3}^m
\mathcal{K}_{n,m;r,s}(z,w;q) \Phi_{r,s;\sigma-\rho}(z,w;q) 
=(zq)^{\sigma_1-1}(wq)^{3-\sigma_3}
\Phi_{n,m;\sigma-\rho}(z,w;q),
\]
it follows that
\begin{align*}
\phi_{n,m}&=\sum_{\sigma\in S_3}
\sgn(\sigma)\,(uz^2)^{\sigma_1-1} \Big(\frac{v}{d}\Big)^{\chi(\sigma_3=1)} 
\Big(\frac{c}{u}\Big)^{\chi(\sigma_1=3)} (dw^2)^{3-\sigma_3}
\Phi_{n,m;\sigma-\rho}(z/q,w/q;q) \\
&=\Phi_{n,m}(uz,vw;cz,dw;z,w;q). \qedhere
\end{align*}
\end{proof}

To conclude the section we define $y$-analogue of \eqref{Eq_Phiuvcd}:
\begin{equation}\label{Eq_Phi-nmy-uvcdzw}
\Phi_{n,m;y}(u,v;c,d;z,w;q):=
\frac{\Phi_{n-y_1,m-y_1-y_2}(u,v;c,d;zq^{y_{12}},wq^{y_{23}};q)}
{(z;q)_{y_{12}}(w;q)_{y_{23}}(zw/q;q)_{y_{13}}},
\end{equation}
where $y=(y_1,y_2,y_3)\in Q$.
It follows from Lemma~\ref{Lem_nmlimit} that for
$z:=x_1/x_2$ and $w:=x_2/x_3$, 
\begin{align}\label{Eq_nmlimit-y}
& \lim_{n,m\to\infty}\Phi_{n,m;y}\big((zq^{y_{12}})^a,(wq^{y_{23}})^a;
(zq^{y_{12}})^b,(wq^{y_{23}})^b;z,w;q\big) \\
&\qquad = \frac{1}{(q,q,z,w,zw/q;q)_{\infty}}\,
\det_{1\leq i,j\leq 3}\big((x_iq^{y_i})^{\nu_i-\nu_j}\big), \notag
\end{align}
where $\nu=(a+b+2,b+1,0)$.
Furthermore, noting the minor difference in denominators on the right
of \eqref{Eq_Phi-nmy-uvzw} and \eqref{Eq_Phi-nmy-uvcdzw},
it follows that the special case of Lemma~\ref{Lem_cd1} given in
\eqref{Eq_Phi-nm-1111zw} admits the $y$-generalisation
\begin{equation}\label{Eq_Phi-nmy-1111zw}
\Phi_{n,m;y}(1,1;1,1;z,w;q)=
\frac{1-zwq^{n+m-1}}{1-zwq^{-1}}\,\Delta_y(z,w;q) \Phi_{n,m;y}(z,w;q),
\end{equation}
where
\begin{equation}\label{Eq_Delta}
\Delta_y(z,w;q)
:=\frac{(1-zq^{y_{12}})(1-wq^{y_{23}})(1-zwq^{y_{13}})}
{(1-z)(1-w)(1-zw)}.
\end{equation}
Finally, the $y$-analogues of Lemma~\ref{Lem_cd1}
and Theorem~\ref{Thm_A2-tree-2} follow from
\eqref{Eq_Phi-nmy-uvzw} and \eqref{Eq_shiftK} respectively, 

\begin{corollary}\label{Cor_cd1-y}
For $n,m\in\mathbb{Z}$ and $y=(y_1,y_2,y_3)\in Q$,
\begin{align}\label{Eq_cd1-y}
&\Phi_{n,m;y}(u,v;1,1;z,w;q)=\Delta_y(z,w;q) \\
&\qquad\times
\frac{\Phi_{n,m;y}(u,v;z,w;q)
-zwq^{m+y_1-1}\Phi_{n,m;y}(uq^{-y_{12}}/z,vq^{-y_{23}}/w;z,w;q)}
{1-zwq^{-1}}. \notag
\end{align}
\end{corollary}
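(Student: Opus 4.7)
The plan is to derive Corollary~\ref{Cor_cd1-y} as a direct substitution consequence of Lemma~\ref{Lem_cd1}, exactly as hinted in the sentence preceding it.  The overall strategy is: (i) apply Lemma~\ref{Lem_cd1} after the shift $(n,m,z,w)\mapsto(n-y_1,m-y_1-y_2,zq^{y_{12}},wq^{y_{23}})$; (ii) divide through by $(z;q)_{y_{12}}(w;q)_{y_{23}}(zw/q;q)_{y_{13}}$ so that the left-hand side collapses to $\Phi_{n,m;y}(u,v;1,1;z,w;q)$ via definition \eqref{Eq_Phi-nmy-uvcdzw}; (iii) rewrite the two right-hand $\Phi$'s using \eqref{Eq_Phi-nmy-uvzw}; (iv) identify the remaining $q$-Pochhammer ratio as $\Delta_y(z,w;q)/(1-zwq^{-1})$.

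First I would implement step (i).  Lemma~\ref{Lem_cd1}, after the stated substitution, produces on the right the factor
\[
zwq^{y_{12}}q^{y_{23}}\cdot q^{(m-y_1-y_2)-1}=zwq^{m-1+y_{13}-y_1-y_2}=zwq^{m+y_1-1},
\]
where I have used $y_{12}+y_{23}=y_{13}$ and the defining relation $y_1+y_2+y_3=0$ (so that $y_{13}-y_1-y_2=-y_2-y_3=y_1$).  This produces precisely the prefactor $zwq^{m+y_1-1}$ appearing in \eqref{Eq_cd1-y}, and the denominator on that right-hand side becomes $1-zwq^{y_{13}-1}$.

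Next, in steps (ii)--(iii), divide by $(z;q)_{y_{12}}(w;q)_{y_{23}}(zw/q;q)_{y_{13}}$ and renormalise by $(zq;q)_{y_{12}}(wq;q)_{y_{23}}(zwq;q)_{y_{13}}$.  The two $\Phi$'s on the right each turn into $\Phi_{n,m;y}(\cdot,\cdot;z,w;q)$ by \eqref{Eq_Phi-nmy-uvzw}, and the arguments become $(u,v)$ and $(uq^{-y_{12}}/z,vq^{-y_{23}}/w)$ as required (the inner $(u/(zq^{y_{12}}),v/(wq^{y_{23}}))$ from Lemma~\ref{Lem_cd1} rewrites this way).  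The only remaining task is step (iv): the ratio of the two sets of denominators is
\[
\frac{(zq;q)_{y_{12}}(wq;q)_{y_{23}}(zwq;q)_{y_{13}}}
     {(z;q)_{y_{12}}(w;q)_{y_{23}}(zw/q;q)_{y_{13}}}
=\frac{(1-zq^{y_{12}})(1-wq^{y_{23}})(1-zwq^{y_{13}})(1-zwq^{y_{13}-1})}
      {(1-z)(1-w)(1-zw)(1-zwq^{-1})},
\]
since $(zwq;q)_{y_{13}}/(zw/q;q)_{y_{13}}$ telescopes to $(1-zwq^{y_{13}})(1-zwq^{y_{13}-1})/[(1-zw)(1-zw/q)]$.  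Recognising $\Delta_y(z,w;q)$ from \eqref{Eq_Delta} in the first three ratios, this equals $\Delta_y(z,w;q)\cdot(1-zwq^{y_{13}-1})/(1-zwq^{-1})$.  The spurious factor $1-zwq^{y_{13}-1}$ cancels against the denominator produced in step (i), leaving $\Delta_y(z,w;q)/(1-zwq^{-1})$, which is exactly the prefactor in \eqref{Eq_cd1-y}.

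There is no serious obstacle; the whole argument is a controlled bookkeeping of $q$-Pochhammer ratios, and the main point of care is ensuring the algebraic identity $y_{13}-y_1-y_2=y_1$ (equivalently $y_1+y_2+y_3=0$) is used to turn $zwq^{m-1}$ into $zwq^{m+y_1-1}$, and correctly telescoping $(zwq;q)_{y_{13}}/(zw/q;q)_{y_{13}}$ to obtain the extra factor $(1-zwq^{y_{13}-1})/(1-zwq^{-1})$ needed so that the $\Delta_y$ factor emerges cleanly.  The argument works for all $y\in Q$ without sign restrictions on the $y_i$ because the definitions \eqref{Eq_Phi-nmy-uvzw} and \eqref{Eq_Phi-nmy-uvcdzw} already handle negative Pochhammer indices uniformly.
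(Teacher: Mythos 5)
Your proposal is correct and follows exactly the route the paper intends: the paper states that Corollary~\ref{Cor_cd1-y} ``follows from \eqref{Eq_Phi-nmy-uvzw}'', i.e.\ from substituting $(n,m,z,w)\mapsto(n-y_1,m-y_1-y_2,zq^{y_{12}},wq^{y_{23}})$ in Lemma~\ref{Lem_cd1} and renormalising by the Pochhammer products in the definitions \eqref{Eq_Phi-nmy-uvzw} and \eqref{Eq_Phi-nmy-uvcdzw}. Your bookkeeping — in particular the identities $y_{13}-y_1-y_2=y_1$, the cancellation of the factor $1-zwq^{y_{13}-1}$, and the emergence of $\Delta_y(z,w;q)$ from the shifted-versus-unshifted denominator ratio — is exactly the verification the paper leaves to the reader.
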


\begin{corollary}\label{Cor_A2-tree-2}
For $n,m\in\mathbb{Z}$ and $y=(y_1,y_2,y_3)\in Q$,
\begin{align*}
&\sum_{r=y_1}^n \sum_{s=y_1+y_2}^m \mathcal{K}_{n,m;r,s}(z/q,w/q;q)
\Phi_{r,s;y}(u,v;c,d;z,w;q) \\
&\qquad = 
z^{y_1}w^{y_1+y_2}q^{\frac{1}{2}(y_1^2+y_2^2+y_3^2)-2y_1-y_2}
\Phi_{n,m;y}(uzq^{y_{12}},vwq^{y_{23}};czq^{y_{12}},dwq^{y_{23}};z,w;q).
\end{align*}
\end{corollary}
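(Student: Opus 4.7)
The plan is to reduce Corollary~\ref{Cor_A2-tree-2} directly to Theorem~\ref{Thm_A2-tree-2} by a parameter substitution followed by a shift of the summation indices, in precise analogy with how Corollary~\ref{Cor_A2-chain} was derived from Theorem~\ref{Thm_A2-Bailey}. The author's parenthetical remark that the corollary ``follows from \eqref{Eq_shiftK}'' suggests exactly this strategy, and confirms that the rescaling behaviour of $\mathcal{K}_{n,m;r,s}$ is doing all the real work.

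First, I would apply Theorem~\ref{Thm_A2-tree-2} after the substitution
\[
(n,m,z,w)\mapsto\big(n-y_1,\,m-y_1-y_2,\,zq^{y_{12}},\,wq^{y_{23}}\big).
\]
This produces a double sum over $0\leq r\leq n-y_1$ and $0\leq s\leq m-y_1-y_2$ involving $\mathcal{K}_{n-y_1,m-y_1-y_2;r,s}(zq^{y_{12}-1},wq^{y_{23}-1};q)$ and $\Phi_{r,s}(u,v;c,d;zq^{y_{12}},wq^{y_{23}};q)$, equal to $\Phi_{n-y_1,m-y_1-y_2}(uzq^{y_{12}},vwq^{y_{23}};czq^{y_{12}},dwq^{y_{23}};zq^{y_{12}},wq^{y_{23}};q)$.

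Next, I would shift $(r,s)\mapsto(r-y_1,s-y_1-y_2)$, which brings the ranges of summation to $y_1\leq r\leq n$ and $y_1+y_2\leq s\leq m$, matching those in the statement. Applying \eqref{Eq_shiftK} with $(z,w)\mapsto(z/q,w/q)$ rewrites
\[
\mathcal{K}_{n-y_1,m-y_1-y_2;r-y_1,s-y_1-y_2}\big((z/q)q^{y_{12}},(w/q)q^{y_{23}};q\big)
\]
as $(z/q)^{-y_1}(w/q)^{-y_1-y_2}q^{-\frac{1}{2}(y_1^2+y_2^2+y_3^2)}\mathcal{K}_{n,m;r,s}(z/q,w/q;q)$. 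Pulling the scalar factor across the summation yields the prefactor $z^{y_1}w^{y_1+y_2}q^{\frac{1}{2}(y_1^2+y_2^2+y_3^2)-2y_1-y_2}$ on the right-hand side, which matches the power of $q$ appearing in the statement of the corollary (the extra $q^{2y_1+y_2}$ compared with Corollary~\ref{Cor_A2-chain} is precisely the effect of the $z/q,w/q$ versus $z,w$ shift).

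Finally, I would apply the definition \eqref{Eq_Phi-nmy-uvcdzw} twice. On the left, $\Phi_{r-y_1,s-y_1-y_2}(u,v;c,d;zq^{y_{12}},wq^{y_{23}};q)$ equals $(z;q)_{y_{12}}(w;q)_{y_{23}}(zw/q;q)_{y_{13}}\,\Phi_{r,s;y}(u,v;c,d;z,w;q)$; on the right, the analogous identity replaces $\Phi_{n-y_1,m-y_1-y_2}$ with the same Pochhammer prefactor times $\Phi_{n,m;y}(uzq^{y_{12}},vwq^{y_{23}};czq^{y_{12}},dwq^{y_{23}};z,w;q)$. The common factor $(z;q)_{y_{12}}(w;q)_{y_{23}}(zw/q;q)_{y_{13}}$ cancels and the claimed identity drops out. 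There is no serious obstacle; the only care required is bookkeeping: tracking the $z/q,w/q$ arguments through \eqref{Eq_shiftK} so that the exponent of $q$ in the prefactor comes out correctly, and checking that the Pochhammer denominators in \eqref{Eq_Phi-nmy-uvcdzw} (which differ from those in \eqref{Eq_Phi-nmy-uvzw} by the substitution $z,w\mapsto z/q,w/q$ inside $(zq;q)_{y_{12}}(wq;q)_{y_{23}}(zwq;q)_{y_{13}}$) are indeed the same on both sides so that they cancel cleanly.
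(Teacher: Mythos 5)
Your proposal is correct and is precisely the argument the paper intends: the corollary is obtained from Theorem~\ref{Thm_A2-tree-2} by the substitution \eqref{Eq_nmzw}, the index shift $(r,s)\mapsto(r-y_1,s-y_1-y_2)$, the rescaling identity \eqref{Eq_shiftK} applied with $(z,w)\mapsto(z/q,w/q)$ (which is where the extra $q^{-2y_1-y_2}$ in the prefactor comes from), and the definition \eqref{Eq_Phi-nmy-uvcdzw} on both sides, whose common denominator $(z;q)_{y_{12}}(w;q)_{y_{23}}(zw/q;q)_{y_{13}}$ cancels. Your bookkeeping of the exponents matches the stated prefactor, so nothing is missing.
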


\subsection{$\mathrm{A}_2$ Bailey pairs}\label{Subsec_A2-Bailey-pairs}
In this section we briefly discuss the notion of $\mathrm{A}_2$
Bailey pairs.
We will not, however, translate all of the various $\mathrm{A}_2$
Bailey transformations of Section~\ref{Subsec_A2-tree} in terms
of such pairs.
Throughout, \eqref{Eq_Q} is used for $r=3$.

Let
\begin{align*}
\alpha(z,w;q)&=\big(\alpha_y(z,w;q)\big)_{y\in Q_{+}}, \\
\beta(z,w;q)&=\big(\beta_{n,m}(z,w;q)\big)_{n,m\in\mathbb{N}_0}
\end{align*}
be a pair of sequences such that
\begin{equation}\label{Eq_BP-A2}
\beta_{n,m}(z,w;q)=\sum_{y\in Q_{+}}\Phi_{n,m;y}(z,w;q)\alpha_y(z,w;q).
\end{equation}
Then we say that $(\alpha(z,w;q),\beta(z,w;q))$ is an
$\mathrm{A}_2$ Bailey pair relative to $z,w$.
Note that in the above definition only those $y\in Q_{+}$
contribute to the sum on the right for which
$y_1\leq n$ and $y_1+y_2\leq m$.
Definition \eqref{Eq_BP-A2} is not the same as the one adopted
in \cite{ASW99}, where $Q_{++}$ was used instead of $Q_{+}$.
Further define
\begin{align*}
\Psi_{y;r,s}(z,w;q)&:=q^{r+s-y_{13}} 
(zq;q)_{y_{12}}(wq;q)_{y_{23}}(zwq;q)_{y_{13}} \\[1mm]
&\qquad\qquad\times
\Phi_{y_1-r,y_1+y_2-s}\big(zq^{y_{12}},wq^{y_{23}};q^{-1}\big),
\end{align*}
for $r,s\in\mathbb{N}_0$ and $y\in Q_{+}$.
Recalling \eqref{Eq_zwprod}, the $\mathrm{A}_2$ inversion relation 
\eqref{Eq_inversion} may then be written as
\[
\sum_{y\in Q_{+}} \Phi_{n,m;y}(z,w;q)\Psi_{y;N,M}(z,w;q)
=\delta_{n,N}\delta_{m,M},
\]
for $n,m,N,M\in\mathbb{N}_0$.
Since $\Phi_{n,m;y}(z,w;q)$ vanishes unless $n\geq y_1$ and $m\geq y_1+y_2$
and $\Psi_{y;N,M}(z,w;q)$ vanishes unless $y_1\geq N$ and $y_1+y_2\geq M$,
the summand on the left is only supported on $y\in Q_{+}$ such that
$N\leq y_1\leq n$ and $M\leq y_1+y_2\leq m$.
Similarly, it follows that for $y,Y\in Q_{+}$,
\[
\sum_{r,s\in\mathbb{N}_0}
\Psi_{y;r,s}(z,w;q)\Phi_{r,s;Y}(z,w;q)=\delta_{y,Y},
\]
with summand supported on $Y_1\leq r\leq y_1$ and 
$Y_1+Y_2\leq s\leq y_1+y_2$.
The relation \eqref{Eq_BP-A2} is thus invertible, so that
\begin{equation}\label{Eq_BP-A2-inv}
\alpha_y(z,w;q)=
\sum_{r,s\in\mathbb{N}_0} \Psi_{y;r,s}(z,w;q)\beta_{r,s}(z,w;q).
\end{equation}
This in turn gives rise to what may be called the 
$\mathrm{A}_2$ unit Bailey pair:
\begin{equation}\label{Eq_UBP}
\alpha_y(z,w;q)=\Psi_{y;0,0}(z,w;q) \quad\text{and}\quad 
\beta_{n,m}(z,w;q)=\delta_{n,0}\delta_{m,0}.
\end{equation}
If $\alpha_n(z;q):=\alpha_{(n,-n,0)}(z,0;q)$ and
$\beta_n(z;q):=\beta_{n,0}(z,0;q)$, then \eqref{Eq_BP-A2} for $m=w=0$
and \eqref{Eq_BP-A2-inv} for $y=(n,-n,0)$ and $w=0$ correspond to 
\eqref{Eq_BP-A1} and \eqref{Eq_BP-A1-inv} respectively.

For a number of $\mathrm{A}_2$ Bailey pairs, such as the
unit Bailey pair \eqref{Eq_UBP} or pairs that follow from the
unit Bailey pair by application of \eqref{Eq_A2-chain},
the definition \eqref{Eq_BP-A2} is perfectly useable.
However, the explicit form of many $\mathrm{A}_2$ Bailey pairs
is rather unwieldy, making the definition not particularly
practical.
A good example is the Bailey pair implied by the identity 
\eqref{Eq_seed} of the next section.
This identity corresponds to the root of the tree of identities
on which our proof of the Kanade--Russell conjecture is based.
It is quite artificial, and not at all enlightening, to write the
left-hand side of \eqref{Eq_seed} as a sum over $Q_{+}$ ---
which is necessary in order to read off $\alpha_y$ ---
instead of $Q$.


\section{Proof of the Kanade--Russell conjecture}\label{Sec_KR}

Before we can apply the $\mathrm{A}_2$ Bailey tree to prove
Conjecture~\ref{Con_KR-ASW} we need a suitable identity playing the
role of root in the Bailey tree.
This root identity is given by the $\mathrm{A}_2$-analogue
of~\eqref{Eq_Slater2}.
Before stating the actual result, we note that for $n,m\in\mathbb{Z}$
and $y=(y_1,y_2,y_3)\in Q$,
\begin{equation}
\Phi_{n,m;y}(q,q;q):=\lim_{z,w\to 1} \Phi_{n,m;y}(zq,wq;q)=
\frac{(q;q)_1^2(q;q)_2}{(q;q)_{n+m+2}^2}
\prod_{i=1}^3 \qbin{n+m+2}{n-y_i+i-1},
\end{equation}
which vanishes unless $i-m-3\leq y_i\leq n+i-1$ for all $1\leq i\leq 3$.
The reason $\Phi_{n,m;y}(q,q;q)$ is defined as a limit
is that the term $(zwq^3;q)_{n+m}$ in the numerator of
$\Phi_{n,m;y}(zq,wq;q)$ has a simple pole at $zw=1$ if $n+m+2<0$
(for $n+m+2\geq 0$ the function $\Phi_{n,m;y}(z,w;q)$ is regular at
$z=w=1$).
This pole has zero residue and the above expression on the right arises.
Moreover, it follows from the above inequalities for the $y_i$ that the
only instances where $\Phi_{n,m;y}(q,q;q)$ is nonvanishing for
$\min\{n,m\}<0$ correspond to $y=(-1,0,1)$ and $\min\{n,m\}=-1$.
This in particular implies that if $t$ is an integer greater than $1$,
then $\Phi_{n,m;ty}(q,q;q)$ vanishes if $(n,m)\not\in\mathbb{N}_0^2$.

Recall definition \eqref{Eq_Delta} of $\Delta_y$.

\begin{proposition}
Let $n,m\in\mathbb{N}_0$, $\tau\in\{-1,0,1\}$ and
\[
g_{n,m;\tau}(q):=
\frac{q^{\tau(\tau-1)nm}}{(q,q^2;q)_{n+m}}\qbin{n+m}{n}_p,
\]
where $p=q$ if $\tau^2=1$ and $p=q^3$ if $\tau=0$.
Then \label{page_seed}
\begin{equation}\label{Eq_seed}
\sum_{y\in Q} \Phi_{n,m;3y}(q,q;q) \Delta_{3y}(q,q;q)
\prod_{i=1}^3 q^{3(3+\tau)\binom{y_i}{2}-\tau iy_i}=g_{n,m;\tau}(q).
\end{equation}
\end{proposition}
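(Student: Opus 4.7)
The plan is to derive \eqref{Eq_seed} as a specialisation of Gustafson's multilateral ${_6\psi_6}$ summation \eqref{Eq_Gustafson} for the affine root system $\mathrm{A}_2^{(1)}$, exactly as \eqref{Eq_Slater2} in the $\mathrm{A}_1$ case follows from Bailey's classical ${_6\psi_6}$ via Slater's pair B(3). The structural match is compelling: the left-hand side of \eqref{Eq_seed} is a sum over the $\mathrm{A}_2$ root lattice $Q$ whose summand is the product of a Gaussian weight, a Vandermonde-type prefactor $\Delta_{3y}(q,q;q)$ (the $\mathrm{A}_2$ Weyl denominator at $z=w=q$), and a ratio of $q$-Pochhammers encoded in $\Phi_{n,m;3y}(q,q;q)$. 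All three of these pieces are present in \eqref{Eq_Gustafson} at $r=3$, and the same mechanism was already invoked in the proof of Proposition~\ref{Prop_inversion}.

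First I would expand the definitions \eqref{Eq_zwprod} and \eqref{Eq_Delta} at $z=w=q$ with $y\mapsto 3y$, using
\[
\Phi_{n,m;3y}(q,q;q)=\frac{(1-q)^3(1-q^2)}{(q;q)_{n+m+2}^{2}}
\prod_{i=1}^{3}\qbin{n+m+2}{n+i-1-3y_i}
\]
together with the explicit form
\[
\Delta_{3y}(q,q;q)=\frac{(1-q^{1+3y_{12}})(1-q^{1+3y_{23}})(1-q^{2+3y_{13}})}{(1-q)^2(1-q^2)}.
\]
Converting the $q$-binomials to ratios of $q$-Pochhammers of the shape $(aq^{3y_i};q)_\bullet/(bq^{3y_i};q)_\bullet$ should bring the left-hand side of \eqref{Eq_seed} into the form of Gustafson's $\mathrm{A}_2^{(1)}$ ${_6\psi_6}$ after the substitution $(x_1,x_2,x_3)\mapsto(1,q^{-1},q^{-2})$ and the rescaling $q\mapsto q^3$ inside the summation variable. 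Four of the six free parameters $(a_j,b_j)$ are chosen of the form $q^{-n-\alpha_j}$ or $q^{-m-\beta_j}$ to force termination and to produce the denominators $(q;q)_n$, $(q;q)_m$ of $g_{n,m;\tau}(q)$; the remaining two are adjusted so that the Gaussian weight and the linear factor $q^{-\tau i y_i}$ come out correctly.

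The second step is the bookkeeping that matches the Gaussian exponent $3(3+\tau)\binom{y_i}{2}$. The natural Gaussian produced by sending parameters in \eqref{Eq_Gustafson} to infinity carries a degree equal to the number of such limits, so for $\tau=-1$ one obtains a direct specialisation, whereas for $\tau\in\{0,1\}$ one or two additional confluences are required. The $\tau=0$ case in addition forces a base change, which I would handle via the factorisation $(q;q)_{3n}=(q;q^3)_n(q^2;q^3)_n(q^3;q^3)_n$ in order to land on the base-$q^3$ $q$-binomial $\qbin{n+m}{n}_{q^3}$ appearing in $g_{n,m;0}(q)$. The extra prefactor $q^{\tau(\tau-1)nm}=q^{2nm}\chi(\tau=-1)$ is then produced by the shift $q^{-n-\alpha}\cdot q^{-m-\beta}$ contributions after rearrangement.

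The main obstacle is precisely this combinatorial matching between the six parameters of \eqref{Eq_Gustafson}, the three values of $\tau$, and the shifted Pochhammers produced by the $y\mapsto 3y$ rescaling; in particular the mismatch between base-$q$ and base-$q^3$ factors in the $\tau=0$ case is the subtlest point and may require combining two specialisations of \eqref{Eq_Gustafson}. A natural fallback, should the Gustafson route become unwieldy, is a proof by induction on $n+m$: both sides trivially equal $1$ at $(n,m)=(0,0)$, and the $q$-Pascal rule for the $q$-binomials in $\Phi_{n,m;3y}(q,q;q)$ produces a first-order recursion in $(n,m)$ for the left-hand side which, after telescoping the $\Delta_{3y}$- and Gaussian contributions across the shifts $n\mapsto n-1$ and $m\mapsto m-1$, matches the obvious recursion satisfied by $g_{n,m;\tau}(q)$.
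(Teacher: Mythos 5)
Your setup is correct (the expansions of $\Phi_{n,m;3y}(q,q;q)$ and $\Delta_{3y}(q,q;q)$ match the paper's), but the core of the argument is missing, and the main route you propose has a concrete obstruction. Gustafson's identity \eqref{Eq_Gustafson} carries no free exponential weight: its summand is purely a Vandermonde factor times $\prod_{i,j}(a_jx_i/x_j;q)_{y_i}/(b_jx_i/x_j;q)_{y_i}$. After the base change $q\mapsto q^3$ and the interleaving $(a;q)_{3y}=(a;q^3)_y(aq;q^3)_y(aq^2;q^3)_y$, the six $y$-dependent Pochhammers hidden in $\prod_i\qbin{n+m+2}{n-3y_i+i-1}$ consume, for each $i$, all three numerator and all three denominator slots of the $r=3$ Gustafson sum, i.e.\ all six parameters $a_j,b_j$. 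The reversal $1/(q;q)_{N-3y_i}=(-1)^{3y_i}q^{3Ny_i-\binom{3y_i}{2}}(q^{-N};q)_{3y_i}/(q;q)_N$ then leaves a quadratic deficit of order $q^{-9\binom{y_i}{2}}$ per $i$, on top of the target $q^{3(3+\tau)\binom{y_i}{2}}$, and with no parameters remaining there is no confluence available to supply it. This is essentially the same phenomenon the paper flags in the remark following the proposition: the ``obvious'' $\mathrm{A}_2$ analogues of the rank-one derivation produce $1/((q;q)_n(q;q)_m)$ rather than $g_{n,m;1}(q)$. Your fallback (induction on $n+m$ via the $q$-Pascal rule) is not developed and is unlikely to close easily, since shifting $n\mapsto n-1$ perturbs all three $q$-binomials and the factor $\Delta_{3y}$ simultaneously.

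The paper's proof is organised quite differently and by cases. For $\tau=1$ the identity is the bounded $\mathrm{A}_2$ pentagonal number theorem \eqref{Eq_page692}, which is proved in the appendix from the $\mathrm{A}_{r-1}$ $q$-Pfaff--Saalsch\"utz summation \eqref{Eq_PS} of Leininger--Milne and Bhatnagar--Schlosser. The decisive difference with Gustafson is that \eqref{Eq_PS} is a sum over $\mathbb{N}_0^r$ rather than $Q$ and contains the coupling factors $(b,q^{-N};q)_{\abs{y}}$ and $\prod_i(bq^{1-N}/cx_i;q)_{\abs{y}-y_i}$ depending on $\abs{y}=y_1+\dots+y_r$; after specialising, setting $b=0$ and $d\to\infty$ (which supplies the missing Gaussian weight), applying the Gessel--Krattenthaler rotation trick to pass from $\mathbb{N}_0^r$ to $\mathbb{Z}\times Q$, and extracting the coefficient of $z^m$ with the $q$-binomial theorem, one obtains \eqref{Eq_tau-een} and hence \eqref{Eq_page692}. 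The case $\tau=-1$ then follows from $\tau=1$ by the substitution $q\mapsto 1/q$ together with the reciprocity of $\Phi_{n,m;y}$ (not, as you suggest, as the ``direct'' case). The case $\tau=0$ comes from an entirely different source: the Gessel--Krattenthaler cylindric-partition identity combined with Krattenthaler's determinant evaluation \eqref{Eq_Kratt_b}, which is what produces the base-$q^3$ binomial $\qbin{n+m}{n}_{q^3}$ that you correctly identified as the subtlest point. If you want to salvage your plan, the lesson is that you need a multilateral or multiple summation with an $\abs{y}$-dependent factor (or an extra free Gaussian) rather than Gustafson's ${_6\psi_6}$.
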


The above definition of $g_{n,m;\tau}(q)$ is the same as \eqref{Eq_g}
of the introduction.

\begin{proof}
Recall that $y_{ij}:=y_i-y_j$.
The identity \eqref{Eq_seed} for $\tau=1$ is a bounded form of the
$\mathrm{A}_2$-analogue of Euler's penta\-gonal number theorem, stated
in \cite[page 692]{ASW99} in the form
\begin{align}\label{Eq_page692}
&\sum_{y\in Q} \prod_{1\leq i<j\leq 3} (1-q^{3y_{ij}+j-i})
\prod_{i=1}^3 q^{12\binom{y_i}{2}-iy_i} \qbin{n+m+2}{n-3y_i+i-1} \\
&\qquad=(1-q^{n+m+1})(1-q^{n+m+2})^2\,\qbin{n+m}{n}, \notag
\end{align}
for $n,m\in\mathbb{N}_0$.
The proof of \eqref{Eq_page692} given in \cite{ASW99} is very involved.
First an identity for what are known as supernomial coefficients is
established (the $\ell=0$ case of \cite[Equation (5.3)]{ASW99}).
This is then transformed using an $\mathrm{A}_2$ Bailey lemma
for supernomial coefficients, resulting in \cite[Equation (5.15)]{ASW99}.
Using the determinant evaluation \eqref{Eq_Kratt_b} below, this finally
yields~\eqref{Eq_page692}.
In the appendix we present a much simpler proof which implies that
\eqref{Eq_page692} arises by taking the constant term with respect to $z$ in
the $r=3$ instance of the identity
\begin{align}\label{Eq_CT}
&\sum_{y_1,\dots,y_r\in\mathbb{Z}} 
\prod_{1\leq i<j\leq r} (1-q^{ry_{ij}+j-i})
\prod_{i=1}^r (-1)^{ry_i} z^{y_i} q^{\binom{r+1}{2}y_i^2-iy_i} 
\qbin{n+m+r-1}{n-ry_i+i-1} \\
&\qquad=\bigg(\,\prod_{i=1}^{r-1}(1-q^{n+m+i})^i\bigg)
\sum_{k=-m}^n (-1)^k z^k q^{(r+1)\binom{k}{2}}\qbin{n+m}{n-k}.
\notag
\end{align}
This last result is a polynomial analogue of the classical theta
function identity
\[
\det_{1\leq i,j\leq r} \bigg( q^{i(i-j)} 
\theta\Big(z\big({-}q^{-i}\big)^{r+1} 
q^{rj+\binom{r+1}{2}};q^{r(r+1)}\Big)\bigg) 
=\frac{(q^{r+1};q^{r+1})_{\infty}(q;q)_{\infty}^{r-1}}
{(q^{r(r+1)};q^{r(r+1)})_{\infty}^r}\, \theta\big(z;q^{r+1}\big).
\]

Replacing $q\mapsto 1/q$ in \eqref{Eq_seed} for $\tau=-1$, and
then using $\qbin{n+m}{n}_{1/q}=q^{-nm} \qbin{n+m}{n}$ as well as
\[
\Phi_{n,m;y}\big(z^{-1},w^{-1};q^{-1}\big)
=z^{n+2y_1} w^{m-2y_3} 
q^{n^2-nm+m^2+n+m+\sum_{i=1}^3 y_i^2}\,\Phi_{n,m;y}(z,w;q)
\]
where $y\in Q$, gives \eqref{Eq_seed} for $\tau=-1$.

Finally, according to \cite[Equation (6.18)]{GK97},
\[
\sum_{y\in rQ} \det_{1\leq i,j\leq r} \bigg(
q^{\binom{y_i}{2}+(j-i)(j+y_i)}\qbin{n+m}{n-y_i+i-j}\bigg) 
=\qbin{n+m}{n}_{q^r}.
\]
By \cite[page 189]{Krattenthaler90} 
\begin{align}\label{Eq_Kratt_b}
&\det_{1\leq i,j\leq r} \bigg(q^{(j-i)(j+i+b_i)}
\qbin{n+m}{n-b_i-j}\bigg) \\ 
&\qquad =\prod_{1\leq i<j\leq r}(1-q^{b_i-b_j})
\prod_{i=1}^r \frac{1}{(q^{n+m+i};q)_{r-i}}
\qbin{n+m+r-1}{n-b_i-1} \notag
\end{align}
for $n,m,b_1,\dots,b_r\in\mathbb{Z}$, this can be recast as
\begin{equation}\label{Eq_KG}
\sum_{y\in rQ} \prod_{1\leq i<j\leq r}(1-q^{y_{ij}+j-i})
\prod_{i=1}^r q^{\binom{y_i}{2}} \qbin{n+m+r-1}{n-y_i+i-1} 
=\qbin{n+m}{n}_{q^r} \prod_{i=1}^{r-1} (1-q^{n+m+i})^i.
\end{equation}
For $r=3$ this is \eqref{Eq_seed} with $\tau=0$.
\end{proof}

\begin{remark}
Although \eqref{Eq_seed} is the natural $\mathrm{A}_2$-analogue of
\eqref{Eq_Slater2}, there is a notable difference between the $\tau=1$
instances of these identities.
From \eqref{Eq_BP-A1-inv} we may infer what is known as the 
$\mathrm{A}_1$ unit Bailey pair:
\[
\alpha_n(z;q)=\frac{1-zq^{2n}}{1-z}\,
(-1)^n q^{\binom{n}{2}} \quad\text{and}\quad \beta_n(z;q)=\delta_{n,0}.
\]
By \eqref{Eq_Bailey-pair-2} this yields\footnote{Alternatively, this
follows after specialising $N=0$ in \eqref{Eq_inversion-A1}.}
\[
\sum_{r=0}^n \frac{1-zq^{2r}}{1-z}\,
(-1)^r q^{\binom{r}{2}} \Phi_{n;r}(z;q)=\delta_{n,0}.
\]
Applying Corollary~\eqref{Cor_A1-chain} then gives
\[
\sum_{r=0}^n \frac{1-zq^{2r}}{1-z}\,
(-1)^r z^r q^{3\binom{r}{2}+r} \Phi_{n;r}(z;q)=\frac{1}{(q;q)_n},
\]
which for $z=q$ is the same as \eqref{Eq_Slater} (and hence 
\eqref{Eq_Slater2}) for $\tau=1$.
The $\tau=1$ case of \eqref{Eq_seed}, however, does \emph{not} follow
from the $\mathrm{A}_2$ unit Bailey pair \eqref{Eq_UBP}.
Indeed, the once-iterated $\mathrm{A}_2$ unit Bailey pair gives the
$k=a$ case of \eqref{Eq_step1}, which has $1/((q;q)_n(q;q)_m)$ as
right-hand side, not $g_{n,m;1}(q)$.
Instead, \eqref{Eq_seed} for $\tau=1$ follows from the $\mathrm{A}_2$
unit Bailey pair for supernomial coefficients, see~\cite{ASW99,W99}.
\end{remark}

Equipped with the identity \eqref{Eq_seed} we can prove 
Conjecture~\ref{Con_KR-ASW}.
The essence of the proof is encoded in the diagram on 
page~\pageref{page_sketch},
where the root identity corresponds to the vertex labelled $0$ and the
Kanade--Russell conjecture \eqref{Eq_KR-ASW} corresponds to the vertex
labelled $k$.

\begin{proof}[Proof of Conjecture~\ref{Con_KR-ASW}]
In view of the discussion regarding $\Phi_{n,m}(q,q;q)$ at the beginning
of this section, if in Corollary~\ref{Cor_A2-chain} we restrict $n,m$ to
nonnegative integers and replace $y\mapsto ty$ for $y\in Q$ where
$t$ is an integer greater than $1$, then the resulting transformation
can be written as
\begin{equation}\label{Eq_A2-chain-zwisq}
\sum_{r=0}^n \sum_{s=0}^m 
\mathcal{K}_{n,m;r,s}(q,q;q) \Phi_{r,s;ty}(q,q;q)=
\Phi_{n,m;ty}(q,q;q)
\prod_{i=1}^3 q^{t^2\binom{y_i}{2}-tiy_i}.
\end{equation}
Indeed, the summand on the left vanishes unless
$r\geq\max\{0,ty_1\}$ and $s\geq\max\{0,ty_1+ty_2\}$ so that
\eqref{Eq_A2-chain-zwisq} is consistent with \eqref{Eq_A2-chain}.
(The transformation \eqref{Eq_A2-chain-zwisq} fails
for $n,m\in\mathbb{N}_0$ and $t=1$ when $y=(-1,0,1)$, requiring 
a lower bound of $-1$ in the summations over $r$ and $s$ instead
of $0$.)

Now let $a,k$ be integers such that $a\leq k$.
(Initially only $k-a$ is required to be a nonnegative integer, but
there is no loss of generality in assuming integrality of $a$ and $k$
from the outset.)
Then, by a $(k-a)$-fold application of \eqref{Eq_A2-chain-zwisq} with
$t=3$, the root identity \eqref{Eq_seed} transforms into 
\begin{align}\label{Eq_not-compact}
&\sum_{\substack{\la\subseteq (n^{k-a}) \\ \mu\subseteq (m^{k-a})}}
g_{\la_{k-a},\mu_{k-a};\tau}(q) 
\prod_{i=1}^{k-a} 
\mathcal{K}_{\la_{i-1},\mu_{i-1};\la_i,\mu_i}(q,q;q) \\[-2mm]
&\qquad=\sum_{y\in Q} \Phi_{n,m;3y}(q,q;q)\Delta_{3y}(q,q;q)
\prod_{i=1}^3 q^{3(K-3a)\binom{y_i}{2}-(K-3a-3)iy_i}, \notag
\end{align}
where $\la_0:=n$, $\mu_0:=m$, $n,m\in\mathbb{N}_0$ and $K:=3k+3+\tau$.
This is the identity represented by the vertex labelled $k-a$ in the
diagram on page~\pageref{page_sketch}.
For later reference we note that by \eqref{Eq_Phi-nmy-1111zw} the above
may also be stated as
\begin{align}\label{Eq_compact}
& \frac{1-q^{n+m+1}}{1-q}
\sum_{\substack{\la\subseteq (n^{k-a})\\\mu\subseteq (m^{k-a})}}
g_{\la_{k-a},\mu_{k-a};\tau}(q)  
\prod_{i=1}^{k-a} \mathcal{K}_{\la_{i-1},\mu_{i-1};\la_i,\mu_i}(q,q;q) \\[-2mm]
&\qquad=\sum_{y\in Q} \Phi_{n,m;3y}(1,1;1,1;q,q;q) 
\prod_{i=1}^3 q^{3(K-3a)\binom{y_i}{2}-(K-3a-3)iy_i}. \notag
\end{align}

In the remainder of the proof we will use the shorthand
\[
Z_t:=q^{ty_{12}+1} \quad \text{and} \quad W_t:=q^{ty_{23}+1},
\]
where $t$ is an integer greater than $1$.
We then make the simultaneous substitutions
\[
(u,v,z,w,y)\mapsto \big(Z_t^{\ell-1},W_t^{\ell-1},q,q,ty\big)
\]
in \eqref{Eq_Bailey-A2-uv-Phi} for $n,m\in\mathbb{N}_0$.
By
\begin{equation}\label{Eq_zw-scaled}
\mathcal{K}_{n,m;r,s}(az,bw;q)=a^rb^s\mathcal{K}_{n,m;r,s}(z,w;q),
\end{equation}
for $z=w=q$ and $(a,b)=(1/q,1)$, this yields
\begin{align}\label{Eq_latticeqq}
&\sum_{r=0}^n \sum_{s=0}^m q^{-r} \mathcal{K}_{n,m;r,s}(q,q;q)
\Phi_{r,s;ty}
\big(Z_t^{\ell-1},W_t^{\ell-1};q,q;q\big) \\
&\qquad = q^{-ty_3+t^2\sum_{i=1}^3 \binom{y_i}{2}}
\Phi_{n,m;ty}\big(Z_t^{\ell},W_t^{\ell};q,q;q\big).  \notag
\end{align}
Here we have once again used that for $t\geq 2$ the lower bounds
on the sums over $r$ and $s$ may be simplified to $0$.
Using \eqref{Eq_Phi-nmy-11zw} to replace $\Phi_{n,m;3y}(q,q;q)$
by $\Phi_{n,m;3y}(1,1;q,q;q)$ in the summand on the right of
\eqref{Eq_not-compact}, and then applying \eqref{Eq_latticeqq}
with $t=3$ a total of $a-b$ times, first with $\ell=1$, then $\ell=2$
and so on, we obtain
\begin{align}\label{Eq_k-a_a-b}
&\sum_{\substack{\la\subseteq (n^{k-b})\\\mu\subseteq (m^{k-b})}}
g_{\la_{k-b},\mu_{k-b};\tau}(q)
\prod_{i=1}^{k-b} q^{-\chi(i\leq a-b)\la_i}
\mathcal{K}_{\la_{i-1},\mu_{i-1};\la_i,\mu_i}(q,q;q) \\[-2mm]
&\:=\sum_{y\in Q} \Phi_{n,m;3y}\big(Z_3^{a-b},W_3^{a-b};q,q;q\big)
\Delta_{3y}(q,q;q)
\prod_{i=1}^3 q^{3(K-3b)\binom{y_i}{2}-Kiy_i-3\nu_iy_i}, \notag
\end{align}
for integers $b\leq a\leq k$ and $n,m\in\mathbb{N}_0$,
where $\nu:=(a+b+2,b+1,0)$.
To express the summand on the right in terms of the partition $\nu$ we 
have used that $(a-b)y_3-a\sum_{i=1}^3 iy_i = \sum_{i=1}^3 (\nu_i+i)y_i$
for $y\in Q$.
The identity \eqref{Eq_k-a_a-b} is represented by the right-most 
vertex labelled $k-b$ in the diagram on page~\pageref{page_sketch}, and
by abuse of notation will be denoted in the following as $I_a$.
Similarly, the identity corresponding to the
left-most vertex labelled $k-b$ in the diagram on page~\pageref{page_sketch}
will be denoted by $I_{a-1}$, since is follows from $I_a$ by the substitution
$a\mapsto a-1$.
It then follows from Corollary~\ref{Cor_cd1-y} with
\[
(u,v,z,w,y)\mapsto \big(Z_3^{a-b},W_3^{a-b},q,q,3y\big)
\]
that $(I_a-q^{m+1} I_{a-1})/(1-q)$ is given by
\begin{align}
\label{Eq_IaIam1}
&\sum_{\substack{\la\subseteq (n^{k-b})\\\mu\subseteq (m^{k-b})}}
g_{\la_{k-b},\mu_{k-b};\tau}(q) \, \frac{1-q^{m+\la_{a-b}+1}}{1-q}
\prod_{i=1}^{k-b} q^{-\chi(i\leq a-b)\la_i}
\mathcal{K}_{\la_{i-1},\mu_{i-1};\la_i,\mu_i}(q,q;q) \\
&\qquad=\sum_{y\in Q}\Phi_{n,m;3y}\big(Z_3^{a-b},W_3^{a-b};1,1;q,q;q\big)
\prod_{i=1}^3 q^{3(K-3b)\binom{y_i}{2}-Kiy_i-3\nu_i y_i}.  \notag
\end{align}
Since this is a linear combination of $I_a$ and $I_{a-1}$, we should
now restrict the parameters to $b<a\leq k$.
However, since $\la_0:=n$, the identity \eqref{Eq_IaIam1} for $b=a$
simplifies to \eqref{Eq_compact}.
Hence \eqref{Eq_IaIam1}, which corresponds to the central
vertex in the encircled region of the diagram on page~\pageref{page_sketch},
holds for all $b\leq a\leq k$.

In our third and final application of the $\mathrm{A}_2$ Bailey tree,
we carry out the substitutions
\[
(u,v,c,d,z,w,y)\mapsto 
\big(uZ_t^{\ell-1},vW_t^{\ell-1},Z_t^{\ell-1},W_t^{\ell-1},q,q,ty\big)
\]
in Corollary~\ref{Cor_A2-tree-2}.
By \eqref{Eq_zw-scaled} for $z=w=q$ and $a=b=1/q$, this gives
\begin{align*}
&\sum_{r=0}^n  \sum_{s=0}^m  q^{-r-s} \mathcal{K}_{n,m;r,s}(q,q;q) 
\Phi_{r,s;ty}\big(uZ_t^{\ell-1},vW_t^{\ell-1};Z_t^{\ell-1},W_t^{\ell-1};
q,q;q\big) \\[1mm]
&\qquad=q^{t^2\sum_{i=1}^3 \binom{y_i}{2}}
\Phi_{n,m;ty}\big(uZ_t^{\ell},vW_t^{\ell};Z_t^{\ell},W_t^{\ell};q,q;q\big),
\end{align*}
for $n,m\in\mathbb{N}_0$ and $t\geq 2$.
This transformation is applied to \eqref{Eq_IaIam1} 
a total of $b$ times, with $t,u,v$ fixed as
\[
(t,u,v)=\big(3,Z_3^{a-b},W_3^{a-b}\big),
\]
and $\ell=1$ in the first application, $\ell=2$ in the second
application and so on.
As a result,
\begin{align}\label{Eq_KR-finite}
&\sum_{\substack{\la\subseteq (n^k) \\ \mu\subseteq (m^k)}}
g_{\la_k,\mu_k;\tau}(q)\,\frac{1-q^{\la_a+\mu_b+1}}{1-q}
\prod_{i=1}^k q^{-\chi(i\leq a)\la_i-\chi(i\leq b)\mu_i}
\mathcal{K}_{\la_{i-1},\mu_{i-1};\la_i,\mu_i}(q,q;q) \\[-2mm]
&\qquad=\sum_{y\in Q} 
\Phi_{n,m;3y}\big(Z_3^a,W_3^a;Z_3^b,W_3^b;q,q;q\big) 
\prod_{i=1}^3 q^{3K\binom{y_i}{2}-Kiy_i-3\nu_i y_i}, \notag
\end{align}
which is a rational function analogue of \eqref{Eq_KR-ASW},
and corresponds to the vertex labelled $k$ in the diagram on
page~\pageref{page_sketch}.
Although it suffices to prove \eqref{Eq_KR-ASW} for $0\leq b\leq a\leq k$,
we note that the $a,b$-symmetry that is manifest in \eqref{Eq_KR-ASW}
is also satisfied by \eqref{Eq_KR-finite} thanks to \eqref{Eq_symmetry}.
Hence \eqref{Eq_KR-finite} holds for all $0\leq a,b,\leq k$.
Specifically, from \eqref{Eq_symmetry} the $a,b$-symmetry follows 
by making the simultaneous substitutions $(a,b,n,m)\mapsto (b,a,m,n)$
(so that $\nu=(a+b+2,b+1,0)\mapsto (a+b+2,a+1,0)$) and by then changing
the summation indices $(\la,\mu)\mapsto(\mu,\la)$ on the left and
$(y_1,y_2,y_3)\mapsto (-y_3,-y_2,-y_1)$ on the right.

It remains to be shown that \eqref{Eq_KR-ASW} simplifies to the
Kanade--Russell conjecture in the large-$n,m$ limit.
By \eqref{Eq_nmlimit-y} with $(y,x_i)\mapsto (ty,q^{-i})$ (so that
$(z,w)\mapsto(q,q)$), 
\[
\lim_{n,m\to\infty}\Phi_{n,m;ty}\big(Z_t^a,W_t^a;Z_t^b,W_t^b;q,q;q\big) 
=\frac{1}{(q;q)_{\infty}^5}\,
\det_{1\leq i,j\leq 3}\Big(q^{(ty_i-i)(\nu_i-\nu_j)}\Big).
\]
The limit of \eqref{Eq_KR-finite} is thus given by
\begin{align}\label{Eq_S}
&\sum_{\substack{\la,\mu\in\Par \\[1pt] l(\la),\la(\mu)\leq k}} 
\frac{1-q^{\la_a+\mu_b+1}}{1-q}\;
\frac{\prod_{i=1}^k q^{\la_i^2-\la_i\mu_i+\mu_i^2
+\chi(i>a)\la_i+\chi(i>b)\mu_i}}
{\prod_{i=1}^{k-1} (q;q)_{\la_i-\la_{i+1}}(q;q)_{\mu_i-\mu_{i+1}}}\,
g_{\la_k,\mu_k;\tau}(q)\\
&\qquad=\frac{1}{(q;q)_{\infty}^3} \sum_{y\in Q} 
\det_{1\leq i,j\leq 3} 
\Big(q^{3K\binom{y_i}{2}-Kiy_i-(3y_i+j-i)\nu_j}\Big), \notag
\end{align}
where $\nu:=(a+b+2,b+1,0)$, as before.
The remaining task of writing the right-hand side in product-form can
easily be carried out for arbitrary rank, and in the following we consider
\[
A_{\nu;k}(q):=\sum_{y\in Q}\det_{1\leq i,j\leq r}
\Big(q^{rK\binom{y_i}{2}-Kiy_i-(ry_i+j-i)\nu_j}\Big),
\]
for $\nu=(\nu_1,\dots,\nu_r)$.
First we write $A_{\nu;k}(q)$ as a constant term and then
appeal to multilinearity.
Thus
\begin{align*}
A_{\nu;k}(q) &= [z^0]\sum_{y\in\mathbb{Z}^r} 
\det_{1\leq i,j\leq r} \Big(z^{y_i} q^{rk\binom{y_i}{2}-kiy_i
+(ry_i+j-i)(j-\nu_j)}\Big) \\
&= [z^0] \det_{1\leq i,j\leq r} 
\bigg(\sum_{y\in\mathbb{Z}}z^y 
q^{rk\binom{y}{2}-kiy-(ry+j-i)\nu_j}\bigg).
\end{align*}
Interchanging rows and columns (i.e., replacing $(i,j)\mapsto (j,i)$),
negating $y$ and using the fact that we are taking the constant
term with respect to $z$, this leads to
\begin{align*}
A_{\nu;k}(q) & = [z^0] \det_{1\leq i,j\leq r} 
\bigg(\sum_{y\in\mathbb{Z}}
z^y q^{rk\binom{y}{2}+kiy+ry\nu_i+(j-i)(ky+\nu_i)}\bigg) \\
&=\sum_{y\in Q} \det_{1\leq i,j\leq r}\Big(q^{(j-i)(ky_i+\nu_i)}\Big) 
\prod_{i=1}^r q^{rk\binom{y_i}{2}+kiy_i+r\nu_iy_i}.
\end{align*}
Applying the Vandermonde determinant
\[
\det_{1\leq i,j\leq r} \big(x_i^{j-i}\big)=
\prod_{1\leq i<j\leq r}(1-x_i/x_j)
\]
this gives
\[
A_{\nu;k}(q)=\sum_{y\in Q}
\prod_{i=1}^r q^{rk\binom{y_i}{2}+kiy_i+r\nu_iy_i}
\prod_{1\leq i<j\leq r}\big(1-q^{ky_{ij}+\nu_i-\nu_j}\big).
\]
By the $\mathrm{A}_{r-1}^{(1)}$ Macdonald identity~\cite{Macdonald72}
\begin{equation}\label{Eq_Macdonald}
\sum_{y\in Q} \prod_{i=1}^r x_i^{ry_i} q^{r\binom{y_i}{2}+iy_i}
\prod_{1\leq i<j\leq r} \big(1-q^{y_{ij}}x_i/x_j\big)
=(q;q)_{\infty}^{r-1} \prod_{1\leq i<j\leq r} \theta(x_i/x_j;q)
\end{equation}
with $(q,x_i)\mapsto (q^k,q^{\nu_i})$, this results in the product
form
\[
A_{\nu;k}(q)=(q^k;q^k)_{\infty}^{r-1}
\prod_{1\leq i<j\leq r}\theta\big(q^{\nu_i-\nu_j};q^k\big).
\]
Taking $r=3$, $\nu=(a+b+2,b+1,0)$ and $k=K$, yields
\[
\frac{(q^K;q^K)_{\infty}^2}{(q;q)_{\infty}^3}
\prod_{1\leq i<j\leq 3}\theta\big(q^{a+1},q^{b+1},q^{a+b+2};q^K\big)
\]
for the right-hand side of \eqref{Eq_S}.
\end{proof}

\section{Below-the-line identities}\label{Sec_Belowtheline}

As in Conjecture~\ref{Con_KR-ASW}, fix the modulus $K$ as $K=3k+\tau+3$
for $k$ a nonnegative integer and $\tau\in\{-1,0,1\}$.
In the introduction immediately preceding the conjecture,
we remarked that there should be an ASW-type identity for all nonnegative
integers $a,b$ such that $a+b\leq K-3$, with product side given
by\footnote{For $\tau=-1$ this rules out $(a,b)=(k,k)$, which as
discussed in the introduction gives the same product as $(a,b)=(k,k-1)$
albeit a slightly different multisum according to \eqref{Eq_KR-ASW}.}
\[
\frac{(q^K;q^K)_{\infty}^2}{(q;q)_{\infty}^3}\,
\theta\big(q^{a+1},q^{b+1},q^{a+b+2};q^K\big),
\]
Without loss of generality assuming that
\begin{equation}\label{Eq_ab-order}
0\leq b\leq a\leq K-a-b-3,
\end{equation}
this corresponds to
\[
\binom{k+2}{2}-\delta_{\tau,-1}+\Big\lfloor \frac{(k+\tau)^2}{4}\Big\rfloor
\]
distinct ASW-type identities.
Hence in the Kanade--Russell conjecture roughly one third of all cases
is missing, counted by the above floor function.
In their paper, Kanade and Russell adopt a certain diagrammatic arrangement
for the triples $(K-a-b-3,a,b)$ with fixed $K$, leading them to refer to
the missing identities as the `below-the-line' cases.
Equivalently, this corresponds to \eqref{Eq_ab-order} with $a>k$
(and thus $b\leq k+\tau-2$).
If $k=1$ this forces $\tau=1$, in which case there is the
single below-the-line solution: $(a,b)=(2,0)$.
By solving the modulus-$7$ Corteel--Welsh equations \cite{CW19},
Kanade and Russell found the missing multisum, resulting in
\[
\sum_{\la_1,\mu_1=0}^{\infty} \,
\frac{1-q^{2\la_1-\mu_1}}{1-q}\,
\frac{q^{\la_1^2-\la_1\mu_1+\mu_1^2-\la_1+\mu_1}}
{(q;q)_{\la_1}(q;q)_{\mu_1}(q^2;q)_{\la_1+\mu_1}}
=\frac{(q^7;q^7)_{\infty}}{(q;q)_{\infty}^3}\,
\theta\big(q,q^3,q^3;q^7\big).
\]
In general, however, no explicit such multisum-forms for below-the-line
values of $a$ and $b$ are known.
The exception is $\tau=0$, in which case Kanade and Russell observed that
if
\[
\Theta_{a,b;k}(q):=\theta\big(q^{a+1},q^{b+1},q^{a+b+2};q^{3k+3}\big),
\]
then Weierstrass' three-term relation \cite[page 61]{GR04} implies,
\[
\Theta_{a,b;k}(q)=\Theta_{2k-a,a+b-k;k}-q^{b+1}\Theta_{2k-a-b-1,a-k-1;k}(q).
\]
Importantly, for $\tau=0$ and fixed $k\geq 2$, the below-the-line 
values of $(a,b)$ satisfy $k<a\leq \lfloor 3k/2\rfloor$ and
$0\leq b\leq 3k-2a$.
Assuming such $a,b$ and defining $(a',b'):=(2k-a,a+b-k)$ and
$(a'',b''):=(2k-a-b-1,a-k-1)$, it follows that
$0<b'\leq a'\leq \lceil k/2 \rceil$ and $0\leq b''\leq a'\leq k-2$.
This implies the following theorem covering all of the
below-the-line cases.
For integers $a,b,k$ such that $0\leq a,b\leq k$, let
\[
\mathcal{F}_{a,b;k}(q):=
\sum_{\substack{\la_1\geq\cdots\geq\la_k\geq 0 \\[1pt] 
\mu_1\geq\cdots\geq\mu_k\geq 0}}\, 
\frac{1-q^{\la_a+\mu_b+1}}{1-q}\,
\frac{q^{\sum_{i=1}^k(\la_i^2-\la_i\mu_i+\mu_i^2)+
\sum_{i=a+1}^k\la_i+\sum_{i=b+1}^k\mu_i}}
{\prod_{i=1}^{k-1} (q;q)_{\la_i-\la_{i+1}}(q;q)_{\mu_i-\mu_{i+1}}} \,
g_{\la_k,\mu_k;0}(q),
\]
where $q^{\la_0}=q^{\mu_0}:=0$.

\begin{theorem}\label{Thm_Belowtheline}
Let $a,b,k$ be integers such that $2\leq k<a\leq \lfloor 3k/2\rfloor$ 
and $0\leq b\leq 3k-2a$.
Then
\[
\mathcal{F}_{2k-a,a+b-k;k}(q)-
q^{b+1} \mathcal{F}_{2k-a-b-1,a-k-1;k}(q)
=\frac{(q^K;q^K)_{\infty}^2}{(q;q)_{\infty}^3}\,
\theta\big(q^{a+1},q^{b+1},q^{a+b+2};q^K\big),
\]
where $K:=3k+3$.
\end{theorem}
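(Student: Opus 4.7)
The theorem should follow directly from combining Theorem~\ref{Thm_KRistrue} (the Kanade--Russell conjecture, established in Section~\ref{Sec_KR}) with the theta-function identity
\begin{equation*}
\Theta_{a,b;k}(q) = \Theta_{2k-a,\,a+b-k;k}(q) - q^{b+1}\Theta_{2k-a-b-1,\,a-k-1;k}(q)
\end{equation*}
that the author records (as Kanade and Russell's observation) in the paragraph preceding the theorem statement.

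First, I would verify that under the hypotheses $2 \leq k < a \leq \lfloor 3k/2\rfloor$ and $0 \leq b \leq 3k-2a$, both pairs $(a',b') := (2k-a,\,a+b-k)$ and $(a'',b'') := (2k-a-b-1,\,a-k-1)$ lie in $\{0,1,\dots,k\}^2$, so that Theorem~\ref{Thm_KRistrue} is applicable at the triples $(a',b';k)$ and $(a'',b'';k)$. Each inequality is a one-line check: $a' = 2k-a \in [\lceil k/2\rceil,\,k-1]$; $b' \geq 1$ from $a > k$; the bound $a+b \leq 2k-1$ (which yields $b' \leq k-1$ and $a'' \geq 0$) comes from combining $b \leq 3k-2a$ with $a > k$; and the remaining bounds $b'',\,a'' \leq k$ follow in the same way.

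Second, Theorem~\ref{Thm_KRistrue} applied at $(a',b';k)$ and $(a'',b'';k)$ yields
\begin{equation*}
\mathcal{F}_{a',b';k}(q) = C_K(q)\,\Theta_{a',b';k}(q), \qquad \mathcal{F}_{a'',b'';k}(q) = C_K(q)\,\Theta_{a'',b'';k}(q),
\end{equation*}
with $C_K(q) := (q^K;q^K)_\infty^2/(q;q)_\infty^3$ and $K := 3k+3$. Substituting these into the left-hand side of the theorem and pulling $C_K(q)$ outside the difference reduces the claim to the theta identity displayed above.

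The main (and essentially only) substantive obstacle is deriving that theta identity from Weierstrass's three-term relation \cite[p.~61]{GR04}. Since each $\Theta$ is a product of three theta functions whereas Weierstrass's identity is a three-term relation among products of four, the derivation requires multiplying through by an auxiliary theta factor $\theta(c;q^K)$ and choosing the four free parameters in Weierstrass so that the resulting four-fold products match $\theta(c;q^K)\,\Theta_{a,b;k}$, $\theta(c;q^K)\,\Theta_{a',b';k}$, and $\theta(c;q^K)\,\Theta_{a'',b'';k}$ up to scalar prefactors coming from the reflection $\theta(z;q^K) = \theta(q^K/z;q^K)$ and the quasi-periodicity $\theta(q^K z;q^K) = -z^{-1}\theta(z;q^K)$. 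The coefficient $-q^{b+1}$ then emerges from careful tracking of these prefactors, and dividing through by $\theta(c;q^K)$ gives the identity.
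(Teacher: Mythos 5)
Your proposal is correct and follows essentially the same route as the paper: the paper likewise verifies that $(a',b')=(2k-a,a+b-k)$ and $(a'',b'')=(2k-a-b-1,a-k-1)$ lie in the admissible range $\{0,\dots,k\}^2$, applies Theorem~\ref{Thm_KRistrue} at both pairs, and invokes the Kanade--Russell three-term theta identity derived from Weierstrass's relation. The paper supplies no more detail on the Weierstrass step than you do, so your sketch matches its level of rigour.
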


This was first stated in \cite{KR23} as a conditional result,
depending on the validity of Conjecture~\ref{Con_KR-ASW}. 
By Theorem~\ref{Thm_KRistrue} the result is now unconditional.
It remains an open problem to express the left-hand side in manifestly
positive form.

\section{Character identities for the $\mathcal{W}_3(3,K)$
vertex operator algebra}\label{Sec_AG}

As explained in full detail in \cite[Section 4]{W23}, for $\tau\neq 0$
(so that $3\nmid K$) the $q$-series in \eqref{Eq_KR-ASW} multiplied
by $q^{h-c/24}(q;q)_{\infty}$ are characters $\chi_{a,b}^K(q)$ of the
$\mathcal{W}_3(3,K)$ vertex operator algebra \cite{FL88,Zamolodchikov85}
of central charge 
\begin{equation}\label{Eq_c}
c=-\frac{2(K-4)(4K-9)}{K}
\end{equation}
and conformal weight
\[
h_{a,b}=\frac{a^2+ab+b^2-(K-3)(a+b)}{K}.
\]
That is,
\[
\chi_{a,b}^K(q)=q^{h_{a,b}-c/24}\,
\frac{(q^K;q^K)_{\infty}^2}{(q;q)_{\infty}^2}\,
\theta\big(q^{a+1},q^{b+1},q^{a+b+2};q^K\big),
\]
where $a,b,K$ are nonnegative integers such that $K\geq 5$, $3\nmid K$ and
$a+b\leq K-3$.
To obtain a multisum expression for these characters without an overall
factor $(q;q)_{\infty}$, we need to carry out a suitable rewriting of
the multisum in \eqref{Eq_KR-ASW}.
This is possible by means of the next lemma, which is a limiting case
of~\cite[Lemma 7.2]{W23}.

\begin{lemma}\label{Lem_F-alt}
For $k$ a positive integer, $m$ a nonnegative integer and
$u=(u_1,\dots,u_{k+1})\in\mathbb{Z}^{k+1}$ define
\[
\mathcal{F}_u(q):=\sum_{\mu_1\geq\cdots\geq\mu_k\geq 0}
\frac{q^{\sum_{i=1}^k \mu_i(\mu_i+u_i)}}{(q)_{\mu_k+u_{k+1}} 
\prod_{i=1}^k (q;q)_{\mu_i-\mu_{i+1}}},
\]
where $\mu_{k+1}:=0$.
If
\[
u_1\leq u_2\leq\cdots\leq u_{k+1},
\]
then
\begin{equation}\label{Eq_F-alt}
\mathcal{F}_u(q)=\frac{1}{(q;q)_{\infty}}
\sum_{\mu_1,\dots,\mu_k\geq 0}
q^{\sum_{i=1}^k \mu_i(\mu_i+u_i)}
\prod_{i=1}^k \qbin{\mu_{i+1}+u_{i+1}-u_i}{\mu_i},
\end{equation}
where, again, $\mu_{k+1}:=0$.
\end{lemma}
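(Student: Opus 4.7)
The plan is to obtain Lemma~\ref{Lem_F-alt} as the $m\to\infty$ limit of \cite[Lemma~7.2]{W23}, which furnishes an $m$-finitised analogue of \eqref{Eq_F-alt}: on the left the parameter $m$ imposes an upper bound on the largest part $\mu_1$, and on the right it enters through an additional $q$-binomial factor. As $m\to\infty$ the partition bound disappears, and via the standard limit $\lim_{n\to\infty}\qbin{n+j}{j}=1/(q;q)_j$ the extra $q$-binomial degenerates to the prefactor $1/(q;q)_\infty$. The remaining multisums and $q$-binomials are independent of $m$ and pass unchanged through the limit.

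For a self-contained argument, one may induct on $k$. The base case $k=1$ is the classical identity
\[
\sum_{n\geq 0}\frac{q^{n(n+u_1)}}{(q;q)_n(q;q)_{n+u_2}}=\frac{1}{(q;q)_\infty}\sum_{n=0}^{u_2-u_1}q^{n(n+u_1)}\qbin{u_2-u_1}{n},
\]
which follows by multiplying through by $(q;q)_\infty$, using $(q;q)_\infty/(q;q)_{n+u_2}=(q^{n+u_2+1};q)_\infty$, expanding via Euler's identity, exchanging the order of summation, and applying Rogers' evaluation $\sum_n q^{n^2}z^n/(q;q)_n=(-zq;q)_\infty$. Equivalently, this is a specialisation of the terminating $q$-Chu--Vandermonde summation.

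For the inductive step, peel off the innermost summation over $\mu_k$ using the factorisation $1/\bigl[(q;q)_{\mu_{k-1}-\mu_k}(q;q)_{\mu_k}\bigr]=\qbin{\mu_{k-1}}{\mu_k}/(q;q)_{\mu_{k-1}}$ and evaluate the resulting terminating $q$-hypergeometric sum in $\mu_k$ by $q$-Chu--Vandermonde. After repackaging, the leftover multisum has the same shape but with one fewer variable and a shifted parameter vector $u'$, to which the inductive hypothesis applies.

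The main obstacle is that the reduced parameter vector $u'$ must continue to satisfy the monotonicity condition $u'_1\leq\cdots\leq u'_k$ for the induction to proceed; controlling the shifts produced by $q$-Chu--Vandermonde so that this ordering is preserved is exactly what motivates the auxiliary parameter $m$ in \cite[Lemma~7.2]{W23}, where a bounded version of \eqref{Eq_F-alt} is set up so that the ordering hypothesis propagates cleanly along the induction. For this reason I would prefer the limit route over the direct induction in a final write-up.
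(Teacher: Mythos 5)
Your primary route coincides with the paper's: Lemma~\ref{Lem_F-alt} is justified there solely as a limiting case of \cite[Lemma 7.2]{W23}, with no further argument supplied, so passing to the $m\to\infty$ limit of that bounded identity is exactly the intended proof. The self-contained induction you sketch is unnecessary (and, as you yourself observe, propagating the ordering hypothesis $u'_1\leq\cdots\leq u'_k$ is the sticking point that the finitisation in \cite{W23} is designed to handle), so the limit argument is the right choice.
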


The left-hand side of \eqref{Eq_KR-ASW} for $\tau\neq 0$ may be
expressed in terms of $\mathcal{F}_u$ as
\[
\sum_{\la_1\geq\cdots\geq\la_k\geq 0}
\, \frac{q^{\sum_{i=1}^k\la_i^2+\sum_{i=a+1}^k\la_i}}
{\prod_{i=1}^k (q;q)_{\la_i-\la_{i+1}}}\,\mathcal{F}_u(q) 
-\chi(ab>0)\!\sum_{\la_1\geq\cdots\geq\la_k\geq 0} 
\,\frac{q^{1+\sum_{i=1}^k\la_i^2+\sum_{i=a}^k\la_i}}
{\prod_{i=1}^k (q;q)_{\la_i-\la_{i+1}}}\,\mathcal{F}_v(q),
\]
where $\la_{k+1}:=0$, 
\[
u_i=\begin{cases}
\chi(i>b)-\la_i & \text{for $1\leq i<k$}, \\
\chi(k>b)-\tau\la_k & \text{for $i=k$}, \\
1+\la_k & \text{for $i=k+1$}, \end{cases}
\quad\text{and}\quad
v_i=\begin{cases}
\chi(i\geq b)-\sigma_i\la_i & \text{for $1\leq i<k$}, \\
1-\tau\la_k & \text{for $i=k$}, \\
1+\la_k & \text{for $i=k+1$}.
\end{cases}
\]
Since for $\la_1\geq\la_2\geq\cdots\geq\la_k$ the inequalities
$u_i\leq u_{i+1}$ and $v_i\leq v_{i+1}$ hold for all $1\leq i\leq k$,
we may use the alternative expressions for $\mathcal{F}_u(q)$ and
$\mathcal{F}_v(q)$ provided by \eqref{Eq_F-alt}.
First, for $\tau=1$, this yields our next theorem,
where $\tilde{\chi}_{a,b}^K(q):=q^{c/24-h_{a,b}} \chi_{a,b}^K(q)$.

\begin{theorem}[$\mathrm{A}_2^{(1)}$ Andrews--Gordon identities, I]
\label{Thm_3k4}
Let $K=3k+4$ for $k\geq 1$.
Then
\begin{align*}
&\tilde{\chi}_{a,b}^K(q) \\
&=\sum_{\substack{\la_1,\dots,\la_k\geq 0\\[1pt]
\mu_1,\dots,\mu_k\geq 0}}
\, \frac{q^{\sum_{i=a+1}^k\la_i+\sum_{i=b+1}^k\mu_i}}
{(q;q)_{\la_1}} 
\prod_{i=1}^k q^{\la_i^2-\la_i\mu_i+\mu_i^2}
\qbin{\la_i}{\la_{i+1}}
\qbin{\la_i-\la_{i+1}+\mu_{i+1}+\delta_{b,i}}{\mu_i} \\[1mm]
&\quad-
\sum_{\substack{\la_1,\dots,\la_k\geq 0\\[1pt]
\mu_1,\dots,\mu_k\geq 0}}
\, \frac{q^{1+\sum_{i=a}^k\la_i+\sum_{i=b}^k\mu_i}}
{(q;q)_{\la_1}} 
\prod_{i=1}^k q^{\la_i^2-\la_i\mu_i+\mu_i^2}
\qbin{\la_i}{\la_{i+1}}
\qbin{\la_i-\la_{i+1}+\mu_{i+1}+\delta_{b-1,i}}{\mu_i}
\end{align*}
for all $0\leq a,b\leq k$, and
\[
\tilde{\chi}_{k,k}^K(q)
=\sum_{\substack{\la_1,\dots,\la_k\geq 0\\[1pt]
\mu_1,\dots,\mu_k\geq 0}}
\, \frac{1}{(q;q)_{\la_1}} 
\prod_{i=1}^k q^{\la_i^2-\la_i\mu_i+\mu_i^2}
\qbin{\la_i}{\la_{i+1}}
\qbin{\la_i-\la_{i+1}+\mu_{i+1}}{\mu_i},
\]
where $q^{\la_0}=q^{\mu_0}=\la_{k+1}:=0$ and $\mu_{k+1}:=\la_k$.
\end{theorem}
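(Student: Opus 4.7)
The plan is to derive Theorem \ref{Thm_3k4} from the Kanade--Russell identity \eqref{Eq_KR-ASW}, now established as Theorem \ref{Thm_KRistrue}, specialised to $\tau=1$ and $K=3k+4$. Its right-hand side differs from $\tilde{\chi}_{a,b}^K(q)$ by a single factor of $(q;q)_{\infty}$, so the task reduces to absorbing that factor into the multisum on the left, which is precisely what Lemma \ref{Lem_F-alt} is engineered to do when applied to the inner $\mu$-summation.

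First I would view the left-hand side of \eqref{Eq_KR-ASW} as an iterated sum, outer over $\la_1\geq\cdots\geq\la_k\geq 0$ and inner over $\mu_1\geq\cdots\geq\mu_k\geq 0$. Using $(q^2;q)_{\la_k+\mu_k}=(q;q)_{\la_k+\mu_k+1}/(1-q)$, the $\mu$-dependent denominator acquires the shape $(q;q)_{\mu_k+u_{k+1}}\prod_{i=1}^k(q;q)_{\mu_i-\mu_{i+1}}$ required by $\mathcal{F}_u$, with $u_{k+1}=\la_k+1$. When $(a,b)\neq(k,k)$ I split
\[
\frac{1-q^{\la_a+\mu_b+1}}{1-q}\,g_{\la_k,\mu_k;1}(q)
=\frac{1-q^{\la_a+\mu_b+1}}{(q;q)_{\la_k}(q;q)_{\mu_k}(q;q)_{\la_k+\mu_k+1}}
\]
additively into two pieces and recognise the resulting inner $\mu$-sums as $\mathcal{F}_u(q)$ and $\mathcal{F}_v(q)$ for the sequences $u,v$ displayed in the passage preceding the theorem. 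The case $(a,b)=(k,k)$ needs no splitting, since $(1-q^{\la_k+\mu_k+1})g_{\la_k,\mu_k;1}(q)/(1-q)$ collapses to $1/\bigl((q;q)_{\la_k}(q;q)_{\mu_k}(q;q)_{\la_k+\mu_k}\bigr)$.

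The monotonicity hypothesis $u_1\leq u_2\leq\cdots\leq u_{k+1}$ of Lemma \ref{Lem_F-alt} is routine: $u_{i+1}-u_i=(\la_i-\la_{i+1})+\delta_{b,i}\geq 0$ for $1\leq i\leq k-1$, and $u_{k+1}-u_k=1+2\la_k-\chi(k>b)\geq 0$, with identical checks for $v$. Substituting \eqref{Eq_F-alt} produces the desired $1/(q;q)_{\infty}$ together with an unrestricted sum over $\mu_1,\ldots,\mu_k\geq 0$ carrying $q$-binomials $\qbin{\mu_{i+1}+u_{i+1}-u_i}{\mu_i}$, and the telescoping identity $1/\prod_{i=1}^k(q;q)_{\la_i-\la_{i+1}}=\prod_{i=1}^k\qbin{\la_i}{\la_{i+1}}/(q;q)_{\la_1}$ (with $\la_{k+1}=0$) puts the outer $\la$-sum into its final form. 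Multiplying both sides through by $(q;q)_{\infty}$ then matches $\tilde{\chi}_{a,b}^K(q)$ on the right. The only real obstacle is bookkeeping: one must carefully track the shift from $\chi(\cdot>b)$ in $u$ to $\chi(\cdot>b-1)$ in $v$ so that these produce the Kronecker shifts $\delta_{b,i}$ and $\delta_{b-1,i}$ exactly as displayed, and verify that under the conventions $q^{\la_0}=q^{\mu_0}=0$, $\la_{k+1}=0$, $\mu_{k+1}=\la_k$ and $1/(q;q)_n=0$ for $n<0$, the summation supports and the quadratic-plus-linear exponents of $q$ on both sides agree.
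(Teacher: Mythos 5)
Your proposal is correct and is essentially the paper's own argument: the paper likewise rewrites the left-hand side of \eqref{Eq_KR-ASW} for $\tau=1$ as an outer $\la$-sum of inner $\mu$-sums identified with $\mathcal{F}_u(q)$ and $\mathcal{F}_v(q)$ (with the second piece absent when $ab=0$, which your conventions handle automatically), checks the monotonicity of $u$ and $v$, and invokes Lemma~\ref{Lem_F-alt} to absorb the factor $(q;q)_{\infty}$. Your treatment of the $(a,b)=(k,k)$ case via the collapsed kernel and the sequence $w$ also coincides with the first of the two derivations the paper sketches for that formula.
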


The second, simpler expression for $\tilde{\chi}_{k,k}^K(q)$
follows by either noting that for $a=b=k$, the left-hand side
of \eqref{Eq_KR-ASW} for $\tau\neq 0$ may alternatively be
recognised as
\[
\sum_{\la_1\geq\cdots\geq\la_k\geq 0}
\, \frac{\mathcal{F}_w(q)}{\prod_{i=1}^k (q;q)_{\la_i-\la_{i+1}}},
\]
where $\la_{k+1}:=0$ and
\[
w_i=\begin{cases}
-\la_i & \text{for $1\leq i<k$}, \\
-\tau\la_k & \text{for $i=k$}, \\
\la_k & \text{for $k+1$},
\end{cases}
\]
or by substituting $a=b=k$ in the expression for 
$\tilde{\chi}_{a,b}^K(q)$, replacing $\mu_k\mapsto\mu_k-1$ in the
second multisum and then combining the two multisums using the
standard recursion for the $q$-binomial coefficient.
The $b=0$ case of Theorem~\ref{Thm_3k4} proves \cite[Conjecture~2.8]{W23} 
and the $a=0$ case proves Equation (2.7) of that same paper.
Since $\tilde{\chi}_{a,b}^K(q)=\tilde{\chi}_{b,a}^K(q)$ while the
right-hand side of the first character formula does not have 
$a,b$-symmetry, there are two distinct expressions for each 
$\mathcal{W}_3(3,K)$ character $\chi_{a,b}^K(q)$ such that $a\neq b$.
The reason for viewing the above as analogues of the Andrews--Gordon
identities \eqref{Eq_AGB} is that in much the same way the latter are 
known to be identities for characters of the Virasoro algebra 
$\Vir(2,K)=\mathcal{W}_2(2,K)$.

For $\tau=-1$ we obtain the following companion to the previous theorem.

\begin{theorem}[$\mathrm{A}_2^{(1)}$ Andrews--Gordon identities, II]
\label{Thm_3k2}
Let $K=3k+2$ for $k\geq 1$ and $0\leq a\leq k$, $0\leq b<k$.
Then
\begin{align*}
&\tilde{\chi}_{a,b}^K(q) \\
&\;=\sum_{\substack{\la_1,\dots,\la_k\geq 0\\[1pt]
\mu_1,\dots,\mu_{k-1}\geq 0}}\!\!
\, \frac{q^{\la_k^2+\sum_{i=a+1}^k\la_i+\sum_{i=b+1}^{k-1}\mu_i}}
{(q;q)_{\la_1}} 
\prod_{i=1}^{k-1} q^{\la_i^2-\la_i\mu_i+\mu_i^2}
\qbin{\la_i}{\la_{i+1}}
\qbin{\la_i-\la_{i+1}+\mu_{i+1}+\delta_{b,i}}{\mu_i} \\[1mm]
&\;\quad- \sum_{\substack{\la_1,\dots,\la_k\geq 0\\[1pt]
\mu_1,\dots,\mu_{k-1}\geq 0}}\!\!
\, \frac{q^{1+\la_k^2+\sum_{i=a}^k\la_i+\sum_{i=b}^{k-1}\mu_i}}
{(q;q)_{\la_1}} 
\prod_{i=1}^{k-1} q^{\la_i^2-\la_i\mu_i+\mu_i^2}
\qbin{\la_i}{\la_{i+1}}
\qbin{\la_i-\la_{i+1}+\mu_{i+1}+\delta_{b-1,i}}{\mu_i}
\end{align*}
for $0\leq a\leq k$, $0\leq b<k$, and
\[
\tilde{\chi}_{k,k}^K(q)
=\sum_{\substack{\la_1,\dots,\la_k\geq 0\\[1pt]
\mu_1,\dots,\mu_{k-1}\geq 0}}\!\!
\, \frac{q^{\la_k^2}}{(q;q)_{\la_1}} 
\prod_{i=1}^{k-1} q^{\la_i^2-\la_i\mu_i+\mu_i^2}
\qbin{\la_i}{\la_{i+1}}\qbin{\la_i-\la_{i+1}+\mu_{i+1}}{\mu_i},
\]
where $q^{\la_0}=q^{\mu_0}=\la_{k+1}:=0$ and $\mu_k:=2\la_k$.
\end{theorem}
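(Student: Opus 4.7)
The plan is to mimic the derivation of Theorem \ref{Thm_3k4} that precedes it, working from the $\tau=-1$ case of the Kanade--Russell identity (Theorem \ref{Thm_KRistrue}) and absorbing the needed factor of $(q;q)_\infty$ via Lemma \ref{Lem_F-alt}. I would start from the simplified form of \eqref{Eq_KR-ASW} displayed immediately after Conjecture \ref{Con_KR-ASW}, in which the inner $\mu_k$-sum has already been eliminated by the $q$-Chu--Vandermonde summation; that form requires $b\neq k$, matching the range of the main formula in the theorem. Multiplying both sides by $(q;q)_\infty$ turns the product side into $\tilde\chi_{a,b}^K(q)$, and rewriting $(q^2;q)_{\la_k+\mu_{k-1}}=(q;q)_{\la_k+\mu_{k-1}+1}/(1-q)$ cancels the $1-q$ in the prefactor $(1-q^{\la_a+\mu_b+1})/(1-q)$.

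Next I would apply the decomposition
\[
1-q^{\la_a+\mu_b+1}=(1-q^{\la_a+1})+q^{\la_a+1}(1-q^{\mu_b}),
\]
pull out the $\la$-dependent factor $q^{\sum_{i=1}^k\la_i^2+\sum_{i=a+1}^k\la_i}$, and recognise the triangular $\mu_1\geq\cdots\geq\mu_{k-1}\geq 0$ sum as an instance of $\mathcal{F}_u$ from Lemma \ref{Lem_F-alt} applied with $k$ replaced by $k-1$ and
\[
u^{(1)}=\bigl(\chi(1>b)-\la_1,\ldots,\chi(k-1>b)-\la_{k-1},\la_k+1\bigr),
\]
where the last entry is forced by matching $(q;q)_{\mu_{k-1}+u_k}=(q;q)_{\la_k+\mu_{k-1}+1}$. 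Because multiplying the summand by $q^{\mu_b}$ is equivalent to incrementing $u^{(1)}_b$ by one to give a sequence $u^{(2)}$, the three-term expansion collapses algebraically to $\mathcal{F}_{u^{(1)}}(q)-q^{\la_a+1}\mathcal{F}_{u^{(2)}}(q)$.

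Monotonicity of both $u^{(1)}$ and $u^{(2)}$ is automatic from $\la_1\geq\cdots\geq\la_k\geq 0$, so Lemma \ref{Lem_F-alt} applies and the free-sum side absorbs the $(q;q)_\infty$ factor. For $i<k-1$ the resulting binomial $\qbin{\mu_{i+1}+u^{(j)}_{i+1}-u^{(j)}_i}{\mu_i}$ reproduces $\qbin{\la_i-\la_{i+1}+\mu_{i+1}+\delta_{b,i}}{\mu_i}$ for $j=1$ and $\qbin{\la_i-\la_{i+1}+\mu_{i+1}+\delta_{b-1,i}}{\mu_i}$ for $j=2$; at $i=k-1$, taking $\mu_k:=0$ as in the lemma yields $\qbin{u_k-u_{k-1}}{\mu_{k-1}}=\qbin{\la_{k-1}+\la_k+\delta_{b,k-1}}{\mu_{k-1}}$, and this coincides with the theorem's uniform formula precisely under the convention $\mu_k:=2\la_k$. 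The remaining triangular $\la$-sum becomes the theorem's free sum via the telescoping identity $\frac{1}{(q;q)_{\la_1}}\prod_{i=1}^{k-1}\qbin{\la_i}{\la_{i+1}}=\frac{1}{(q;q)_{\la_k}\prod_{i=1}^{k-1}(q;q)_{\la_i-\la_{i+1}}}$, while the edge conventions $q^{\la_0}=q^{\mu_0}:=0$ automatically handle the degenerate cases $a=0$ or $b=0$, in which the second multisum vanishes.

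The main technical subtlety, and the most likely source of bookkeeping errors, is the treatment of the $i=k-1$ binomial and the attendant convention $\mu_k:=2\la_k$, which is essentially forced on us by the desire for a uniform formula across all $i$. The separate simpler expression for $\tilde\chi_{k,k}^K(q)$ cannot be obtained from the above derivation because the simplified form of \eqref{Eq_KR-ASW} is unavailable at $b=k$, so it is handled exactly as in the $\tau=1$ case: at $(a,b)=(k,k)$ the prefactor of the unreduced \eqref{Eq_KR-ASW} combines with $g_{\la_k,\mu_k;-1}$ to collapse $(1-q^{\la_k+\mu_k+1})/[(1-q)(q^2;q)_{\la_k+\mu_k}]$ into $1/(q;q)_{\la_k+\mu_k}$, whereupon the inner $\mu$-sum is a single $\mathcal{F}_w$ with $w=(-\la_1,\ldots,-\la_{k-1},\la_k,\la_k)$; Lemma \ref{Lem_F-alt} applied once then contributes a final factor $\qbin{w_{k+1}-w_k}{\mu_k}=\qbin{0}{\mu_k}$ which forces $\mu_k=0$ in the free sum, matching the theorem's value $\qbin{\la_{k-1}+\la_k}{\mu_{k-1}}$ at $i=k-1$. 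Alternatively one may specialise the general formula at $(a,b)=(k,k-1)$ (which produces the same theta product via $\theta(z;q)=\theta(q/z;q)$), shift $\mu_{k-1}\mapsto\mu_{k-1}-1$ in the second multisum, and combine with the first multisum using the standard $q$-Pascal recursion.
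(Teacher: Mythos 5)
Your proposal is correct and follows essentially the same route as the paper: split the prefactor $1-q^{\la_a+\mu_b+1}$ into $1$ and $-q^{1+\la_a+\mu_b}$, recognise the inner $\mu$-sums as instances of $\mathcal{F}_u$ with two $u$-sequences differing by an increment of the $b$-th entry, apply Lemma~\ref{Lem_F-alt}, and handle $(a,b)=(k,k)$ via the collapse of the prefactor against $g_{\la_k,\mu_k;-1}$ into a single $\mathcal{F}_w$ with $w=(-\la_1,\dots,-\la_{k-1},\la_k,\la_k)$, exactly as the paper does. The only cosmetic difference is that you eliminate the $\mu_k$-sum up front via $q$-Chu--Vandermonde and invoke the lemma with $k-1$ variables, whereas the paper invokes it with $k$ variables and lets the resulting factor $\qbin{0}{\mu_k}$ force $\mu_k=0$; the two are equivalent.
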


This time the $b=0$ case proves \cite[Conjecture~2.1]{W23} 
and the $a=0$ case proves Equation (2.2) of~\cite{W23}.

For a number of special values of $k$, alternative multisum expressions
to those of Theorems~\ref{Thm_3k4} and \ref{Thm_3k2} are known.
In \cite{CDU22}, Corteel, Dousse and Uncu solved the Corteel--Welsh system
of equations for the two-variable generating function of three-row cylindric
partitions with profile $(5-a-b,a,b)$, resulting in quadruple-sum
expressions for the characters $\tilde{\chi}_{a,b}^8(q)$.
For example (see \cite[Theorem 1.6]{CDU22}), 
\[
\tilde{\chi}_{2,1}^8(q)=
\sum_{n_1,n_2,n_3,n_4=0}^{\infty}
\frac{q^{n_1^2+n_2^2+n_3^2+n_4^2-n_1n_2+n_2n_4}}
{(q;q)_{n_1}}\,\qbin{n_1}{n_2}\qbin{n_1}{n_4}\qbin{n_2}{n_3}.
\]
In \cite[Theorems 2.3 \& 2.4]{FFW08}, Feigin, Foda and Welsh obtained
an Andrews--Gordon-type theorem for a linear combination of characters
of $\Vir(3,3k+2)$ of central charge $c=-3k(6k-5)/(3k+2)$.
For $k=4$ this yields $c=-114/7$, which coincides with the central
charge of $\mathcal{W}_3(3,7)$. 
In this case, four of the six linear combinations considered in
\cite{FFW08} correspond to actual $\mathcal{W}_3(3,7)$ characters.
Three are also covered in Theorem~\ref{Thm_3k4} while the fourth
is below-the-line in the sense of Kanade and Russell.
For example, the character expression for $\tilde{\chi}_{1,1}^7(q)$
arising from $\Vir(3,14)$ is~\cite[Equation (20c)]{FFW08}
\[
\tilde{\chi}_{1,1}^7(q)=\sum_{n_1,n_2,n_3,n_3=0}^{\infty}
\frac{q^{n_1^2+n_2^2+n_3^2+n_4^2+(n_1+n_2+n_3)n_4}}
{(q;q)_{n_1}(q;q)_{n_4}} \qbin{n_1}{n_2} \qbin{n_2}{n_3}.
\]
After the substitutions
\[
(n_1,n_2,n_3,n_4)\mapsto (n_1+n_3+n_4,n_3+n_4,n_4,n_2)
\]
this takes the form
\begin{equation}\label{Eq_FFW}
\tilde{\chi}_{1,1}^7(q)=\sum_{n_1,n_2,n_3,n_3=0}^{\infty}
\frac{q^{\sum_{i,j=1}^4 n_i A_{ij}n_j}}
{(q;q)_{n_1}(q;q)_{n_2}(q;q)_{n_3}(q;q)_{n_4}},
\end{equation}
where
\[
(A_{ij})=\frac{1}{2}
\begin{pmatrix}
2 & 1 & 2 & 2 \\
1 & 2 & 2 & 3 \\
2 & 2 & 4 & 4 \\
2 & 3 & 4 & 6 
\end{pmatrix}.
\]

At a workshop on cylindric partitions held at RICAM in 2022,
Shunsuke Tsuchioka raised the question if all the
$\mathrm{A}_2^{(1)}$ Andrews--Gordon identities
admit alternative sum-sides of the form \eqref{Eq_FFW}.
Such expressions would be closer to the 
$\mathrm{A}_1^{(1)}$ Andrews--Bressoud--Gordon identities, where the
variable change $n_i\mapsto n_i+\dots+n_k$ for all $1\leq i\leq k$
leads to the multisum
\[
\sum_{n_1,\dots,n_k\geq 0}
\frac{q^{\sum_{i,j=1}^k n_i A_{ij} n_j+\sum_{i=1}^k (A_{ki}-A_{ai})n_i}}
{(q;q)_{n_1}\dots(q;q)_{n_{k-1}}(q^{2-\tau};q^{2-\tau})_{n_k}},
\]
where $(A_{ij})_{i,j=1}^k=(\min\{i,j\})_{i,j=1}^k$ is the
Cartan-type matrix of the tadpole graph on $k$ vertices.
As further evidence that such a rewriting might exist for all moduli, 
he made a conjecture for modulus $8$, complementing his own proven 
modulus-$6$ identities \cite{Tsuchioka22}, such as
\begin{align*}
&\sum_{n_1^{(1)},n_2^{(1)},n_1^{(2)},n_2^{(2)}=0}^{\infty} 
\frac{q^{\sum_{i,j,a,b=1}^2 A_{ia,jb} n_i^{(a)}n_j^{(b)}}}
{\prod_{i,a=1}^2(q;q)_{n_i^{(a)}}} \\
&\qquad\qquad=\sum_{n,m,k,l=0}^{\infty} \frac{q^{n^2+3kn+3k^2}}
{(q;q)_{n}(q^3;q^3)_k}\qbin{n}{m}\qbin{k}{l}_{q^3}
=(-q;q)_{\infty}^2(q^2,q^4;q^6)_{\infty},
\end{align*}
where $A=\frac{1}{2}B\otimes C$ (i.e.,
$A_{ia,jb}=\frac{1}{2}B_{ij}C_{ab}$) with matrices $B$ and $C$
given by $B=\begin{psmallmatrix}2&3\\3&6\end{psmallmatrix}$ and 
$C=\begin{psmallmatrix}1&1\\1&1\end{psmallmatrix}$.
From the structure of the summands in Theorems~\ref{Thm_3k4} and
\ref{Thm_3k2} it follows relatively straightforwardly that a rewriting
of the form \eqref{Eq_FFW} can be carried out for the moduli $7$ and $8$.
For larger moduli, however, this simple method fails due to the form of
the summands.
By iterating the Durfee rectangle identity
\cite[Equation (3.3.10)]{Andrews98}
\begin{equation}\label{Eq_Durfee}
\qbin{n+m}{n+a} = 
\sum_{k=0}^n q^{k(k+a)} \qbin{n}{k}\qbin{m}{k+a} 
\end{equation}
for $n,m\in\mathbb{N}_0$ and $a\in\mathbb{Z}$, it follows that the
$q$-binomial coefficient admits the telescopic expansion
\begin{equation}\label{Eq_telescope}
\qbin{k_0+m}{k_0+a} = 
\sum_{k_0\geq k_1\geq k_2\geq\cdots\geq k_r\geq 0} 
\qbin{k_0+m-\sum_{i=0}^{r-1}k_i}{k_r+a}
\prod_{i=1}^r
q^{k_i(k_i+a)}\qbin{k_{i-1}}{k_i},
\end{equation}
for arbitrary nonnegative integer $r$ and integers $a,k_0,m$ such that
$k_0,m\geq 0$ and, if $a=-k_0$, then $m\geq (r-1)k_0$.
If we take $r=2$ and once more apply \eqref{Eq_Durfee} with
$(n,m,a)\mapsto (k_0-k_1,m-k_0,k_1+k_2+a-k_0)$, this implies
\begin{align*}
&\frac{1}{(q;q)_{m-k_0}(q;q)_{k_0}} \qbin{k_0+m}{k_0+a}:= \\
&\sum_{k_1,k_2,k_3} 
\frac{q^{\sum_{i=1}^3 k_i(k_i+a)+(k_1+k_2-k_0)k_3}}
{(q;q)_{k_1-k_2}(q;q)_{k_2}(q;q)_{k_3}
(q;q)_{k_0-k_1-k_3}(q;q)_{a+k_1+k_2+k_3-k_0}(q;q)_{m-a-k_1-k_2-k_3}},
\end{align*}
for all integers $a,k_0,m$ such that $0\leq k_0\leq m$.
Since
\[
\tilde{\chi}_{1,1}^7(q)=
\sum_{\la_1,\mu_1}
\frac{q^{\la_1^2-\la_1\mu_1+\mu_1^2}}
{(q;q)_{\la_1}}\qbin{2\la_1}{\mu_1}
\]
and
\[
\tilde{\chi}_{2,2}^8(q)=
\sum_{\la_1,\la_2,\mu_1}
\frac{q^{\la_1^2-\la_1\mu_1+\mu_1^2+\la_2^2}}
{(q;q)_{\la_1-\la_2}(q;q)_{\la_2}}\qbin{\la_1+\la_2}{\mu_1},
\]
we can use the above expansion with $(m,k_0,a)$ given by
$(\la_1,\la_1,\la_1-\mu_1)$ and $(\la_1,\la_2,\la_1-\mu_1)$
respectively. 
In the first case this fixes $k_3$ as $k_3=\mu_1-k_1-k_2$.
Finally, making the substitutions
\[
(\la_1,\mu_1,k_1,k_2)\mapsto (n_1+n_2+n_3+n_4,n_2+n_3+2n_4, 
n_3+n_4,n_4) 
\]
and
\begin{align*}
&(\la_1,\mu_1,\la_2,k_1,k_2,k_3) \\
&\quad \mapsto
(n_1+n_2+n_3+n_4+n_5+n_6,n_2+n_4+n_5+2n_6,n_3+n_4+n_5+n_6,
n_5+n_6,n_6,n_4)
\end{align*}
yields, respectively, \eqref{Eq_FFW} and
\begin{equation}\label{Eq_ST-conjecture}
\tilde{\chi}_{2,2}^8(q)=
\sum_{n_1,\dots,n_6=0}^{\infty}
\frac{q^{\sum_{i,j=1}^6 n_i A_{ij} n_i}}
{(q;q)_{n_1}\cdots(q;q)_{n_6}},
\end{equation}
for
\[
(A_{ij})=\frac{1}{2}\begin{pmatrix}
2 & 1 & 2 & 2 & 2 & 2 \\
1 & 2 & 1 & 2 & 2 & 3 \\
2 & 1 & 4 & 3 & 4 & 4 \\
2 & 2 & 3 & 4 & 4 & 5 \\
2 & 2 & 4 & 4 & 6 & 6 \\
2 & 3 & 4 & 5 & 6 & 8
\end{pmatrix}.
\]
This last result is exactly one of formulas for
$\tilde{\chi}_{a,b}^8$ conjectured by Tsuchioka~\cite{Tsuchioka22b}.


\section{Character formulas for principal subspaces of
$\mathrm{A}_2^{(1)}$}\label{Sec_PS}

Let $\mathfrak{g}=\mathfrak{sl}_r=\mathrm{A}_{r-1}$ and
$\hat{\mathfrak{g}}=\widehat{\mathfrak{sl}}_r=\mathrm{A}_{r-1}^{(1)}$ 
its untwisted affinisation, i.e.,
\[ 
\hat{\mathfrak{g}}\cong \mathfrak{g}\otimes \mathbb{C}[t,t^{-1}]
\oplus \mathbb{C}c\oplus\mathbb{C}d,
\]
where $c$ is the canonical central element and $d$ a derivation, 
acting on the loop algebra $\mathfrak{g}\otimes \mathbb{C}[t,t^{-1}]$ as
$t\frac{\textup{d}}{\textup{d} t}$, see \cite[Chapter 7]{Kac90} for details.
Fix $I:=\{0,1,\dots,r-1\}$ and let $\CSAa$ be the Cartan subalgebra of 
$\hat{\mathfrak{g}}$ 
with basis $\{\alpha_0^{\vee},\dots,\alpha_{r-1}^{\vee},d\}$,
where the $\alpha_i^{\vee}$ ($i\in I$) are the simple coroots
(so that $c=\sum_{i\in I} \alpha_i^{\vee}$).
Let $A=(a_{ij})_{i,j=0}^{r-1}$ be the (generalised) Cartan matrix
of $\hat{\mathfrak{g}}$, 
and fix the non-degenerate symmetric bilinear form $\bil{\cdot}{\cdot}$
on $\CSAa$ by setting $\bil{\alpha_i^{\vee}}{\alpha_j^{\vee}}=a_{ij}$,
$\bil{d}{d}=0$, $\bil{\alpha_0^{\vee}}{d}=1$ and
$\bil{\alpha_i^{\vee}}{d}=0$ otherwise. 
Further let $\dCSAa$ be the dual of the Cartan subalgebra with basis
$\{\alpha_0,\dots,\alpha_{r-1},\Lambda_0\}$, 
where the $\alpha_i$ ($i\in I$) are the simple roots and $\La_0$ is
the $0$th fundamental weight.
Denote the standard pairing between the Cartan subalgebra and its dual by
$\pairing{\cdot}{\cdot}$, so that $\pairing{\alpha_i}{\alpha_j^{\vee}}=
\bil{\alpha_i^{\vee}}{\alpha_j^{\vee}}=a_{ij}$ and
$\pairing{\La_0}{a_i^{\vee}}=0$.
The additional fundamental weights $\La_1,\dots,\La_{r-1}\in\dCSAa$ are
fixed as $\pairing{\La_i}{\alpha_j^{\vee}}=\delta_{ij}$ for all $i,j\in I$
and $\pairing{\La_i}{d}=0$ for all $i\in I$.
The level of $\la\in\dCSAa$ is defined by $\lev(\la):=\pairing{\la}{c}$.
Hence $\lev(\La_i)=1$ for all $i\in I$ and if
$\delta:=\sum_{i\in I} \alpha_i$ is the null root, then
$\lev(\delta)=\sum_{i,j\in I} a_{ij}=0$.
Finally, let
\[
P:=\big\{\la\in\mathfrak{h}^{\ast}: 
\pairing{\la}{\alpha_i^{\vee}}\in\mathbb{Z} \text{ for all } i\in I\big\}
\]
be the weight lattice of $\hat{\mathfrak{g}}$, and
$P_{+}\subset P$ and $P_{+}^{\ell}\subset P_{+}$ the set of dominant 
integral weights and level-$\ell$ dominant integral weights respectively:
\begin{align*}
P_{+}&=\big\{\la\in\mathfrak{h}^{\ast}: 
\pairing{\uplambda}{\alpha_i^{\vee}}\in\mathbb{N}_0
\text{ for all } i\in I\big\}=
\mathbb{N}_0\La_0+\dots+\mathbb{N}_0\La_{r-1}+\mathbb{C}\delta, \\
P_{+}^{\ell}&=\big\{\la\in P_{+}: \lev(\uplambda)=\ell \big\}.
\end{align*}

A much studied class of representations of $\mathrm{A}_{r-1}^{(1)}$
are the standard or integrable highest weight modules.
There is a unique such module, $L_{\uplambda}$, for each 
$\uplambda\in P_{+} \mod \mathbb{C}\delta$.
If $v_{\uplambda}$ denotes the highest weight vector of $L_{\uplambda}$,
then $\CSAa$ acts diagonally on $v_{\la}$ and
$c v_{\uplambda}=\lev(\uplambda) v_{\uplambda}$.
The principal subspace $W_{\uplambda}\subset L_{\uplambda}$ is defined as
\cite{AKS06,FS93,SF94}\footnote{There are two related but distinct definitions
used in the literature, and here we follow the less standard~\cite{AKS06}.
In the original paper \cite{SF94}, 
$U\big(\mathfrak{n}_{+}\otimes\mathbb{C}[t,t^{-1}]\big) v_{\uplambda}$ is 
used instead.}
\[
W_{\la}:=U\big(\mathfrak{n}_{-}\otimes\mathbb{C}[t,t^{-1}]\big) v_{\uplambda}
=U\big(\mathfrak{n}_{-}\otimes\mathbb{C}[t^{-1}]\big) v_{\uplambda},
\]
where $\mathfrak{n}_{-}\oplus\CSA\oplus\mathfrak{n}_{+}$
is the triangular or Cartan decomposition of $\mathfrak{g}$
and $U(\cdot)$ denotes the universal enveloping algebra.
Let $f_1,\dots,f_{r-1}\in\mathfrak{g}$ denote the standard generators
of $\mathfrak{n}_{-}$.
Then the character of the principal subspace $W_{\uplambda}$ is defined as
\[
\ch W_{\uplambda}:=\sum_{n,d_1,\dots,d_{r-1}\geq 0}
\dim \big(W_{\uplambda;n;d_1,\dots,d_{r-1}}\big) 
\eup^{\uplambda-\delta n-\sum_{i=1}^{r-1}d_i \alpha_i},
\]
where $W_{\uplambda;n;d_1,\dots,d_{r-1}}\subset W_{\uplambda}$ is the
subspace generated by those elements in
$U(\mathfrak{n}_{-}\otimes\mathbb{C}[t^{-1}])$ of degree $d_i$ in $f_i$
and degree $n$ in $t^{-1}$.
For convenience we in the following use the normalised character
\[
\ch W'_{\uplambda}:=\eup^{-\uplambda} \ch W_{\uplambda}.
\]
Ardonne, Kedem and Stone \cite[Equation (6.9)]{AKS06}\footnote{For
$\uplambda=(k-a)\La_0+a\La_i$, $i\in I$, the dependence on the generalised
Kostka polynomials trivialises and the result is essentially due to 
Georgiev \cite{Georgiev96}, with the caveat that he used the definition
of principal subspace from \cite{SF94}.}
found an explicit expression for $\ch W_{\uplambda}$ in terms of
generalised Kostka polynomials~\cite{KS02,SW99}.
Restricting considerations to $r=3$, and assuming the parametrisation
\begin{equation}\label{Eq_lakab}
\uplambda=(k-a-b)\La_0+a\La_1+b\La_2\in P_{+}^k,
\end{equation}
the Ardonne, Kedem and Stone character formula simplifies to
\cite[Equations (6.9), (6.15) \& (6.16)]{AKS06}
\begin{equation}\label{Eq_fermionic}
\ch W'_{\uplambda}:=
\sum_{\substack{\la,\mu\in\Par \\[1pt] l(\la),l(\mu)\leq k}}
\bigg(\big(1-zwq^{\la_a+\mu_b-1}\big) 
\prod_{i=1}^k \frac{z^{\la_i} w^{\mu_i} 
q^{\la_i^2-\la_i\mu_i+\mu_i^2-\chi(i\leq a)\la_i-\chi(i\leq b)\mu_i}}
{(q;q)_{\la_i-\la_{i+1}}(q;q)_{\mu_i-\mu_{i+1}}}\bigg),
\end{equation}
where $q^{\la_0}=q^{\mu_0}:=0$ and $q:=\eup^{-\delta}$,
$z:=\eup^{-\alpha_1}$, $w:=\eup^{-\alpha_2}$.
The restrictions $l(\la),l(\mu)\leq k$ in the sum imply that 
$\la_{k+1}=\mu_{k+1}=0$.
By mild abuse of notation we in the remainder of this section
use $\ch W'_{\uplambda}$ to mean the right-hand
side of \eqref{Eq_fermionic} for all $0\leq a,b\leq k$, despite the
fact that for $a+b>k$ the weight $\uplambda$ is not dominant. 

In the vacuum case, corresponding to $a=b=0$, Feigin et al.\
\cite[Corollary 7.8]{FFJMM09} obtained an alternative `bosonic'
expression for $W_{\uplambda}$.
This is the $a=b=0$ case of our next theorem.

\begin{theorem}\label{Thm_bosonic}
For $a,b,k$ integers such that $0\leq a,b\leq k$, let the weight
$\uplambda$ and partition $\nu$ be given by \eqref{Eq_lakab} and 
$\nu=(a+b+2,b+1,0)$ respectively.
Then
\begin{align}\label{Eq_bosonic}
\ch W'_{\uplambda}&=\prod_{1\leq i<j\leq 3}\frac{1}{(x_i/x_j;q)_{\infty}} \\
& \quad \times \sum_{y\in Q_{+}}
\det_{1\leq i,j\leq 3}\big((x_iq^{y_i})^{\nu_i-\nu_j}\big)
\prod_{i=1}^3 
\frac{x_i^{(k+2)y_i} q^{(k+2)\binom{y_i}{2}-\nu_iy_i}
(x_i/x_3;q)_{y_i}}{(qx_i/x_1;q)_{y_i}},\notag
\end{align}
where $x_1/x_2:=\eup^{-\alpha_1}$ and $x_2/x_3:=\eup^{-\alpha_2}$.
\end{theorem}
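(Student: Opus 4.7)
The plan is to deduce Theorem~\ref{Thm_bosonic} as an essentially immediate corollary of Theorem~\ref{Thm_Principal-subspace}: all of the $\mathrm{A}_2$ Bailey-tree machinery that does the real work has already been deployed to establish that earlier theorem, and what remains is a bookkeeping match between its variable-bearing statement and the representation-theoretic identity here.

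First I would fix the identification of variables. Setting $z:=x_1/x_2$ and $w:=x_2/x_3$ (so that $zw=x_1/x_3$), with $q=\eup^{-\delta}$, I would verify that the defining expression \eqref{Eq_fermionic} for $\ch W'_{\uplambda}$ is literally the same double sum as the left-hand side of Theorem~\ref{Thm_Principal-subspace}. The restriction $l(\la),l(\mu)\leq k$ coincides with the summation range $\la_1\geq\cdots\geq\la_k\geq 0$ (with $\la_{k+1}:=0$) and likewise for $\mu$; the leading factor $1-zwq^{\la_a+\mu_b-1}$ matches $1-(x_1/x_3)q^{\la_a+\mu_b-1}$; and the $k$-fold products of $z^{\la_i}$, $w^{\mu_i}$ and $q$-Gaussians with the shifts $\chi(i\leq a)$, $\chi(i\leq b)$ agree term by term. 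The convention $q^{\la_0}=q^{\mu_0}:=0$ is common to both statements.

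Second, I would check that the right-hand side of Theorem~\ref{Thm_Principal-subspace} depends only on the two ratios $x_1/x_2$ and $x_2/x_3$, so that the specialisation $x_1/x_2=\eup^{-\alpha_1}$, $x_2/x_3=\eup^{-\alpha_2}$ is legitimate and produces the bosonic side of \eqref{Eq_bosonic}. The Pochhammer factors $(x_i/x_j;q)_{\infty}$, $(x_i/x_3;q)_{y_i}$ and $(qx_i/x_1;q)_{y_i}$ are manifestly ratio-only. For the remaining two factors one uses $y_1+y_2+y_3=0$: in $\prod_{i=1}^3 x_i^{(k+2)y_i}$ the exponents sum to zero, and expanding the determinant as $\sum_{\sigma\in S_3}\sgn(\sigma)\prod_i(x_iq^{y_i})^{\nu_i-\nu_{\sigma_i}}$ gives a sum of monomials whose $x$-exponents satisfy $\sum_i(\nu_i-\nu_{\sigma_i})=0$. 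Hence both factors are homogeneous of degree zero in $(x_1,x_2,x_3)$, and the right-hand side of Theorem~\ref{Thm_Principal-subspace} is precisely the right-hand side of \eqref{Eq_bosonic}, once the infinite product $\prod_{1\leq i<j\leq 3}(x_i/x_j;q)_{\infty}^{-1}$ is pulled out in front of the sum.

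The main obstacle is simply a careful notational reconciliation between the two formulations (partitions of length at most $k$ versus descending tuples terminating at $\la_{k+1}=0$, and ratio-parameters $(z,w)$ versus the three formal variables $x_1,x_2,x_3$ that appear in the bosonic sum). There is no substantive analytic or algebraic content beyond Theorem~\ref{Thm_Principal-subspace}; Theorem~\ref{Thm_bosonic} is its reformulation as a character identity for the principal subspace $W_{\uplambda}$, using the Ardonne--Kedem--Stone fermionic formula \eqref{Eq_fermionic} as the definition of $\ch W'_{\uplambda}$.
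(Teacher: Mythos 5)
Your reduction is circular. In this paper Theorem~\ref{Thm_Principal-subspace} has no independent proof: it is stated in the introduction with the remark ``see Section~\ref{Sec_PS} for details'', and the only argument establishing it is precisely the proof of Theorem~\ref{Thm_bosonic}. The two statements are the same identity --- the paper records this in a single line (``By \eqref{Eq_fermionic} this is Theorem~\ref{Thm_Principal-subspace} of the introduction'') --- so your careful matching of the fermionic sum \eqref{Eq_fermionic} against the left-hand side of Theorem~\ref{Thm_Principal-subspace}, and your check that the bosonic side depends only on the ratios $x_1/x_2$ and $x_2/x_3$ (using $y_1+y_2+y_3=0$ and $\sum_i(\nu_i-\nu_{\sigma_i})=0$), are correct but carry no probative force: they show that the theorem you are asked to prove is equivalent to the theorem you are citing, not that either one is true.

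What is missing is the entire Bailey-tree derivation, which is the actual content of the paper's proof. It starts from the root identity \eqref{Eq_unit-BP}, namely the $N=M=0$ case of the $\mathrm{A}_2$ matrix inversion of Proposition~\ref{Prop_inversion} (itself a consequence of Gustafson's ${_6\psi_6}$ summation \eqref{Eq_Gustafson}); applies the $\mathrm{A}_2$ Bailey chain of Corollary~\ref{Cor_A2-chain} a total of $k-a+1$ times to reach \eqref{Eq_step1}; applies part I of the Bailey tree, \eqref{Eq_Bailey-A2-uv-Phi}, a total of $a-b$ times; forms the linear combination $(I_a-zwq^{m-1}I_{a-1})/(1-zwq^{-1})$ via Corollary~\ref{Cor_cd1-y} to obtain \eqref{Eq_step3}; applies part II of the tree (Corollary~\ref{Cor_A2-tree-2}) $b$ times; and finally takes the large-$n,m$ limit using \eqref{Eq_nmlimit-y} together with the evaluation of $\Phi_{y_1,y_1+y_2}(x_1q^{y_{12}}/x_2,x_2q^{y_{23}}/x_3;q^{-1})$ to produce the determinant, the factors $(x_i/x_3;q)_{y_i}/(qx_i/x_1;q)_{y_i}$ and the infinite products. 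None of this appears in your proposal, so as it stands you have not proved the theorem.
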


By \eqref{Eq_fermionic} this is Theorem~\ref{Thm_Principal-subspace} of the
introduction.

\begin{proof}[Proof of Theorem~\ref{Thm_bosonic}]
The main steps of the proof are the same as in the proof of the
Kanade--Russell conjecture in Section~\ref{Sec_KR}.
Key difference is the root identity to which the $\mathrm{A}_2$
Bailey tree is applied, which essentially is the $\mathrm{A}_2$
unit Bailey pair \eqref{Eq_UBP}.
Also, since the right-hand side of \eqref{Eq_bosonic} does not admit a
product form, this time round there is no need for the
$\mathrm{A}_2^{(1)}$ Macdonald identity in the final stages of the proof.

For $y=(y_1,y_2,y_3)\in Q$, let
\begin{equation}\label{Eq_Psi}
\Psi_y(z,w;q):=q^{-y_{13}} (zq;q)_{y_{12}}(wq;q)_{y_{23}}(zwq;q)_{y_{13}}
\Phi_{y_1,y_1+y_2}\big(zq^{y_{12}},wq^{y_{23}};q^{-1}\big).
\end{equation}
Point of departure for our proof is \eqref{Eq_inversion} for $N=M=0$.
Identifying $(r,s)=(y_1,y_1+y_2)$ and using \eqref{Eq_zwprod},
this may also be written as
\begin{equation}\label{Eq_unit-BP}
\delta_{n,0}\delta_{m,0}=
\sum_{y\in Q_{+}}\Phi_{n,m;y}(z,w;q)\Psi_y(z,w;q),
\end{equation}
where $n,m\in\mathbb{N}_0$.
Since $\Phi_{n,m;y}$ vanishes unless $y_1\leq n$ and $y_1+y_2\leq m$,
the sum over $y$ in \eqref{Eq_unit-BP} has finite support.

As in the proof in Section~\ref{Sec_KR}, let $a,k$ be integers such that
$a\leq k$. 
Then, by a $(k-a+1)$-fold application of \eqref{Eq_A2-chain} starting with
the root identity \eqref{Eq_unit-BP}, we obtain
\begin{align}\label{Eq_step1}
&\sum_{\substack{\la\subseteq (n^{k-a}) \\[1pt] \mu\subseteq (m^{k-a})}}
\prod_{i=1}^{k-a+1} \mathcal{K}_{\la_{i-1},\mu_{i-1};\la_i,\mu_i}(z,w;q) 
\\ &\qquad
=\sum_{y\in Q_{+}}
\big(z^{y_1}w^{y_1+y_2}q^{\frac{1}{2}(y_1^2+y_2^2+y_3^2)}\big)^{k-a+1}
\Phi_{n,m;y}(z,w;q)\Psi_y(z,w;q), \notag
\end{align}
where $\la_0:=n$, $\mu_0:=m$.
Next we use \eqref{Eq_Phi-nmy-11zw} to replace $\Phi_{n,m;y}(z,w;q)$
in the summand on the right by $\Phi_{n,m;y}(1,1,z,w;q)$ and define
\[
Z:=zq^{y_{12}} \quad\text{and}\quad W:=wq^{y_{23}}.
\]
Then, by an $(a-b)$-fold application of \eqref{Eq_Bailey-A2-uv-Phi}
where $(u,v)=(Z^{i-1},W^{i-1})$ in the $i$th step, as well as the use
of \eqref{Eq_zw-scaled} for $(a,b)=(1/q,1)$, we find
\begin{align*}
&\sum_{\substack{\la\subseteq(n^{k-b}) \\[1pt] \mu\subseteq(m^{k-b})}}
\prod_{i=1}^{k-b+1} q^{-\chi(i\leq a-b)\la_i} 
\mathcal{K}_{\la_{i-1},\mu_{i-1};\la_i,\mu_i}(z,w;q) \\
&=\sum_{y\in Q_{+}} 
\big(z^{y_1}w^{y_1+y_2}q^{\frac{1}{2}(y_1^2+y_2^2+y_3^2)}\big)^{k-b+1}
q^{-(a-b)y_1} \Phi_{n,m;y}\big(Z^{a-b},W^{a-b};z,w;q\big) \Psi_y(z,w;q),
\end{align*}
for integers $a,b,k$ such that $b\leq a\leq k$.
Again denoting this by $I_a$, it follows from Corollary~\ref{Cor_cd1-y}
that $(I_a-zwq^{m-1} I_{a-1})/(1-zwq^{-1})$ is given by
\begin{align}\label{Eq_step3}
&\sum_{\substack{\la\subseteq(n^{k-b}) \\[1pt] \mu\subseteq(m^{k-b})}}
\frac{1-zwq^{m+\la_{a-b}-1}}{1-zwq^{-1}}
\prod_{i=1}^{k-b+1} q^{-\chi(i\leq a-b)\la_i} 
\mathcal{K}_{\la_{i-1},\mu_{i-1};\la_i,\mu_i}(z,w;q) \\
&\quad=\sum_{y\in Q_{+}} \bigg( 
\big(z^{y_1}w^{y_1+y_2}q^{\frac{1}{2}(y_1^2+y_2^2+y_3^2)}\big)^{k-b+1}
q^{-(a-b)y_1} \notag \\
& \qquad\qquad\quad\times
\Phi_{n,m;y}\big(Z^{a-b},W^{a-b};1,1;z,w;q\big)\,
\frac{\Psi_y(z,w;q)}{\Delta_y(z,w;q)} \bigg).  \notag
\end{align}
Once again this holds for $b\leq a\leq k$ instead of the more restricted
range $b<a\leq k$ since \eqref{Eq_step3} for $b=a$ simplifies to
\eqref{Eq_step1} by $\la_0:=n$ and~\eqref{Eq_Phi-nmy-1111zw}.
The final iterative step in our proof is a $b$-fold 
application of Corollary~\ref{Cor_A2-tree-2}, where
\[ 
(u,v,c,d)=(Z^{a-b+i-1},W^{a-b+i-1},Z^{i-1},W^{i-1})
\]
in the $i$th step.
By \eqref{Eq_zw-scaled} for $a=b=1/q$ this yields
\begin{align}\label{Eq_before-symm}
&\sum_{\substack{\la\subseteq(n^k) \\[1pt] \mu\subseteq(m^k)}}
\frac{1-zwq^{\la_a+\mu_b-1}}{1-zwq^{-1}}\,
\prod_{i=1}^{k+1} q^{-\chi(i\leq a)\la_i-\chi(i\leq b)\mu_i} 
\mathcal{K}_{\la_{i-1},\mu_{i-1};\la_i,\mu_i}(z,w;q) \\
&\quad=\sum_{y\in Q_{+}} \bigg( 
\big(z^{y_1}w^{y_1+y_2}q^{\frac{1}{2}(y_1^2+y_2^2+y_3^2)}\big)^{k+1}
q^{-\sum_{i=1}^3 (\nu_i+i) y_i} \notag \\
& \qquad\qquad\quad\times
\Phi_{n,m;y}\big(Z^a,W^a;Z^b,W^b;z,w;q\big)\,
\frac{\Psi_y(z,w;q)}{\Delta_y(z,w;q)}\bigg), \notag
\end{align}
where we have used that $-ay_1-b(y_1+y_2)=-\sum_{i=1}^3(\nu_i+i)y_i$
for $\nu:=(a+b+2,b+1,0)$.
As for the analogous result \eqref{Eq_KR-finite} in the proof of 
the Kanade--Russell conjecture, this holds for all $0\leq a,b\leq k$.
Specifically, making the simultaneous substitutions
$(z,w,a,b,n,m)\mapsto (w,z,b,a,m,n)$, changing the summation 
indices $(y_1,y_2,y_3)\mapsto (-y_3,-y_2,-y_1)$ on the right and
$(\la,\mu)\mapsto(\mu,\la)$ on the left, it follows from
\eqref{Eq_before-symm} that the both sides are invariant under
the interchange of $a$ and $b$.

Taking the large-$n,m$ limit using \eqref{Eq_nmlimit-y}, using
definitions \eqref{Eq_Delta} and \eqref{Eq_Psi},
and eliminating $z$ and $w$ from the right-hand side in favour of
$x_1,x_2,x_3$, we obtain
\begin{align*}
&\sum_{\substack{\la,\mu\in\Par \\[1pt] l(\la),l(\mu)\leq k}}
\big(1-zwq^{\la_a+\mu_b-1}\big)
\prod_{i=1}^k \frac{z^{\la_i} w^{\mu_i}
q^{\la_i^2-\la_i\mu_i+\mu_i^2-\chi(i\leq a)\la_i-\chi(i\leq b)\mu_i}}
{(q;q)_{\la_i-\la_{i+1}}(q;q)_{\mu_i-\mu_{i+1}}} \\ 
&\:=\prod_{1\leq i<j\leq 3} \frac{1}{(x_i/x_j;q)_{\infty}}
\sum_{y\in Q_{+}} \bigg(
\prod_{1\leq i<j\leq 3} (x_i/x_j;q)_{y_{ij}}
\prod_{i=1}^3 x_i^{(k+1)y_i} q^{(k+1)\binom{y_i}{2}-\nu_iy_i} \\
&\qquad\qquad\qquad\qquad\qquad \times
\det_{1\leq i,j\leq 3}\big((x_iq^{y_i})^{\nu_i-\nu_j}\big)
\Phi_{y_1,y_1+y_2}\big(x_1q^{y_{12}}/x_2,x_2q^{y_{23}}/x_3;q^{-1}\big)
\bigg).
\end{align*}
Since, by $(a/q;q^{-1})_n=(aq^{-n};q)_n$,
\[
\Phi_{y_1,y_1+y_2}\big(x_1q^{y_{12}}/x_2,x_2q^{y_{23}}/x_3;q^{-1}\big) 
=\prod_{1\leq i<j\leq 3} \frac{1}{(x_i/x_j;q)_{y_{ij}}}
\prod_{i=1}^3 \frac{x_i^{y_i} q^{\binom{y_i}{2}} (x_i/x_3;q)_{y_i}}
{(qx_i/x_1;q)_{y_i}},
\]
this gives \eqref{Eq_bosonic}.
\end{proof}

As mentioned in the introduction, the $\mathrm{A}_1$-analogue of
Theorem~\ref{Thm_bosonic} was first proved by Andrews, who showed
that the right hand sides of \eqref{Eq_RS} and \eqref{Eq_RS-multi}
both satisfy
\begin{equation}\label{Eq_RS-eqn}
Q_{k,i}(z;q)-Q_{k,i-1}(z;q)=(zq)^{i-1} Q_{k,k-i+1}(zq;q)
\end{equation}
for $1\leq i\leq k$, where $Q_{k,0}:=0$.
Since both expressions satisfy the same initial conditions 
$Q_{k,i}(0;q)=Q_{k,i}(z;0)=1$, this proves the equality of
\eqref{Eq_RS} and \eqref{Eq_RS-multi}.
The equation \eqref{Eq_RS-eqn} may also be derived purely algebraically
using the theory of intertwining operators for vertex operator
algebras, see \cite{CLM06}.
For general $\mathrm{A}_{r-1}^{(1)}$ this approach has only been
completed fully for level-$1$ modules, see \cite[Theorem 5.3]{CLM10}.
Restricting to $r=3$, this yields 
\begin{subequations}\label{Eq_rec}
\begin{align}
\ch W'_{\La_0}(z,w;q)-\ch W'_{\La_0}(zq,w;q)
&=zq\,\ch W'_{\La_0}(zq^2,wq^{-1};q), \\
\ch W'_{\La_0}(z,w;q)-\ch W'_{\La_0}(q,wq;q)
&=wq\,\ch W'_{\La_0}(zq^{-1},wq^2;q),
\end{align}
\end{subequations}
where the exponents of $q$ in the argument of $\ch W'_{\La_0}$ on
the right are the Cartan integers of $\mathfrak{sl}_3$.
Together with
\[
\ch W'_{k\La_0}(z,w;q)=\ch W'_{k\La_1}(zq,w;q)=\ch W'_{k\La_2}(z,wq;q)
\]
for arbitrary level $k$ and
$\ch W'_{\La_0}(0,0;q)=\ch W'_{\La_0}(z,w;0)=1$, this uniquely determines
the characters $\ch W'_{\La_i}$ for $0\leq i\leq 2$.
It is routine to show that the right-hand side of \eqref{Eq_fermionic} for 
$k=1$ and $a=b=0$ satisfies \eqref{Eq_rec}.
The same cannot be said for the bosonic representation
\begin{align*}
&\ch W'_{\La_0}(z,w;q)=\frac{1}{(zq,wq,zwq;q)_{\infty}} \\
& \quad \times 
\sum_{r,s=0}^{\infty}\bigg(
(-1)^{r+s} z^{2r} w^{2s} q^{2r^2+2s^2-2rs+\binom{r}{2}+\binom{s}{2}} 
\frac{(1-z q^{2r-s})(1-wq^{2s-r})(1-z wq^{r+s})}
{(1-z)(1-w)(1-zw)}\\
&\qquad\qquad\;\times
\frac{(zw;q)_r(zw;q)_s(z;q)_{r-s}(w;q)_{s-r}}{(q;q)_r(q;q)_s} \bigg),
\end{align*}
for which showing \eqref{Eq_rec} holds requires a lengthy computation.
It would be very interesting to extend the approach using functional
equations to $\ch W'_{\uplambda}(z,w;q)$ for weights of arbitrary level.


\section{Outlook}\label{Sec_Outlook}

An important open question is how to generalise Theorems~\ref{Thm_KRistrue}
and \ref{Thm_Principal-subspace}
to $\mathrm{A}_{r-1}^{(1)}$ for all $r$.\footnote{The 
vacuum case $a=b=0$ of Theorem~\ref{Thm_bosonic} was generalised
to all $r$ in~\cite[Theorem 3.1]{FJM11} without the use of the
Bailey machinery.}
As far as the $\mathrm{A}_{r-1}$-analogue of the Bailey chains of 
Lemma~\ref{Lem_chain} and Theorem~\ref{Thm_A2-Bailey} is 
concerned, things are relatively straightforward.
Let $\boldsymbol{n}=(n_1,\dots,n_{r-1})$, 
$\boldsymbol{m}=(m_1,\dots,m_{r-1})$
be integer sequences and $\boldsymbol{z}=(z_1,\dots,z_{r-1})$ a 
sequence of indeterminates.
In \cite{W06} the definition of the rational function 
$\Phi_{n,m}(z,w;q)$ was extended to $\mathrm{A}_{r-1}$ as:
\begin{equation}\label{Eq_Phir}
\Phi_{\boldsymbol{n}}(\boldsymbol{z};q):=
\sum_{\la^{(1)},\dots,\la^{(r-1)}\in\Par}
\prod_{i=1}^{r-1} \prod_{l\geq 1} \frac{z_i^{\la^{(i)}_l}
q^{\frac{1}{2} \sum_{j=1}^{r-1} A_{ij}\la^{(i)}_l\la^{(j)}_l}}
{(q;q)_{\la^{(i)}_{l-1}-\la^{(i)}_l}},
\end{equation}
where $\la^{(i)}_0:=n_i$ and where $(A_{ij})_{1\leq i,j\leq r-1}$
is the Cartan matrix of $\mathrm{A}_{r-1}$.
For an arbitrary sequence $\boldsymbol{a}=(a_1,\dots,a_{r-1})$,
let $\bar{\boldsymbol{a}}:=(a_{r-1},\dots,a_1)$.
Replacing $\la^{(i)}$ by $\la^{(r-i)}$ in \eqref{Eq_Phir} it follows that
\begin{equation}\label{Eq_Z2-symmetry}
\Phi_{\boldsymbol{n}}(\boldsymbol{z};q)=
\Phi_{\bar{\boldsymbol{n}}}(\bar{\boldsymbol{z}};q).
\end{equation}
Another immediate consequence of the definition \eqref{Eq_Phir} is
the $\mathrm{A}_{r-1}$ Bailey chain
\begin{equation}\label{Eq_Ar-Bailey-chain}
\sum_{m_1=0}^{n_1}\dots\sum_{m_{r-1}=0}^{n_{r-1}}
\mathcal{K}_{\boldsymbol{n},\boldsymbol{m}}(\boldsymbol{z};q)
\Phi_{\boldsymbol{m}}(\boldsymbol{z};q)=
\Phi_{\boldsymbol{n}}(\boldsymbol{z};q),
\end{equation}
with $\mathcal{K}_{\boldsymbol{n},\boldsymbol{m}}$ given by
\[
\mathcal{K}_{\boldsymbol{n},\boldsymbol{m}}(\boldsymbol{z};q):=
\prod_{i=1}^{r-1} \frac{z_i^{m_i} 
q^{\frac{1}{2}\sum_{j=1}^{r-1} A_{ij} m_i m_j}}{(q;q)_{n_i-m_i}}. 
\]
Moreover, by Hua's identity \cite[Theorem 4.9]{Hua00} for
$\mathrm{A}_{r-1}$,
\begin{equation}\label{Eq_Hua-limit}
\lim_{n_1,\dots,n_{r-1}\to\infty} 
\Phi_{\boldsymbol{n}}(\boldsymbol{z};q)=
\frac{1}{(q;q)_{\infty}^{r-1}} \prod_{1\leq i<j\leq r}
\frac{1}{(z_i\cdots z_{j-1}q;q)_{\infty}}.
\end{equation}

The alternative expressions for $\Phi_n(z;q)$ and $\Phi_{n,m}(z,w;q)$ as
given in \eqref{Eq_Phi-zu} and \eqref{Eq_Phi} follow from
Corollaries~\ref{Cor_infinite-iteration-A1} and
\ref{Cor_infinite-iteration-A2}, or from \cite[Proposition 2.2]{FJM11}
which is based on the decomposition in the Gelfand--Zetlin basis of 
the Whittaker vectors for the quantum group $U_v(\mathfrak{gl}_r)$ over
$\mathbb{C}(v)$.
This more generally implies that 
\begin{align}\label{Eq_FJM}
\Phi_{\boldsymbol{n}}(\boldsymbol{z};q)&=
\sum \prod_{k\geq 1} \Bigg(
\prod_{i=1}^{r-1} 
\frac{(-1)^{\la^{(i)}_{k+1}} q^{\binom{\la^{(i)}_{k+1}}{2}}}
{(q;q)_{\la^{(i)}_k-\la^{(i)}_{k+1}}} 
\prod_{1\leq i<j\leq r} \bigg( z_{j-1}^{\la^{(i)}_{k+j-i}} 
q^{-(\la^{(i)}_{k+j-i}-\la^{(i)}_{k+j-i+1})\la^{(j)}_k} \\
& \qquad\qquad\quad\times
\frac{1-z_i\cdots z_{j-1}q^{\la^{(i)}_{k+j-i}-\la^{(j)}_k}}
{1-z_i\cdots z_{j-1}}\,
\frac{(z_i\cdots z_{j-1};q)_{\la^{(i)}_{k+j-i+1}-\la^{(j)}_k}}
{(z_i\cdots z_{j-1}q;q)_{\la^{(i)}_{k+j-i-1}-\la^{(j)}_k}}\bigg)\Bigg),
\notag
\end{align}
where the sum is over partitions $\la^{(1)},\dots,\la^{(r)}$ such that
$l(\la^{(i)})\leq r-i$ for $1\leq i\leq r$ (so that $\la^{(r)}=0$)
and $\la^{(i)}_1+\la^{(i-1)}_2+\dots+\la^{(1)}_i=n_i$ for $1\leq i\leq r-1$.
For $r=2$ this yields \eqref{Eq_Phi-zu} and for $r=3$ it gives
\begin{align}\label{Eq_rdrie}
\Phi_{n,m}(z,w;q)&=\frac{1}{(q,zq^{1-m};q)_n(q,wq;q)_m}\\[1mm]
& \quad \times
{_6W_5}\big(zq^{-m};q^{-m}/w,q^{-n},q^{-m};q,zwq^{n+m+1}\big).
\notag
\end{align}
By Jackson's ${_6W_5}$ summation \cite[Equation (II.20)]{GR04}
this simplifies to~\eqref{Eq_Phi}.
The expression \eqref{Eq_FJM} obscures the symmetry \eqref{Eq_Z2-symmetry},
although it can be simplified relatively easily to a $\binom{r-2}{2}$-fold
multisum that is symmetric.
For example, for $r=4$ two of the three summations can be carried out to
give an expression as a balanced ${_4\phi_3}$ basic hypergeometric series:
\begin{align*}
\Phi_{\boldsymbol{n}}(\boldsymbol{z};q)&=\frac{(z_1z_2q;q)_{n_1+n_2}
(z_2z_3q;q)_{n_2+n_3}}
{(q,z_1q,z_1z_2q;q)_{n_1} (q,z_2q,z_1z_2q,z_2z_3q;q)_{n_2} 
(q,z_3q,z_2z_3q;q)_{n_3}} \\[2mm]
&\quad \times
{_4\phi_3}\biggl[\genfrac{}{}{0pt}{}{q^{-n_2}/z_2,q^{-n_1},q^{-n_2},q^{-n_3}}
{q^{-n_1-n_2}/z_1z_2,q^{-n_2-n_3}/z_2z_3,z_1z_2z_3q};q,q\biggr].
\end{align*}
Regardless of how $\Phi_{\boldsymbol{n}}(\boldsymbol{z};q)$ is
expressed, it is an open problem to lift the $\mathrm{A}_{r-1}$
Bailey chain \eqref{Eq_Ar-Bailey-chain} to an $\mathrm{A}_{r-1}$
Bailey tree.
It follows from the work of Ardonne, Kedem and Stone
(see \cite[Equation (6.16)]{AKS06}) that the $1$ and $-q^{-1}$ in
$1-zwq^{\la_a+\mu_b-1}$ in formula \eqref{Eq_fermionic} --- this
factor can be traced back to the structure of the numerator of
\eqref{Eq_cd1} --- should be interpreted as entries of the inverse
of the matrix of generalised Kostka polynomials \cite{KS02,SW99} for
$\mathfrak{sl}_3$.
This suggests that the as-yet-to-be-discovered $\mathrm{A}_{r-1}$ Bailey 
tree involves the generalised Kostka polynomials for $\mathfrak{sl}_r$.
Another open problem is to find the $\mathrm{A}_{r-1}$-analogue of
the \eqref{Eq_Slater2} and~\eqref{Eq_seed}.
For $y=(y_1,\dots,y_r)\in Q$, let
\[
\Phi_{\boldsymbol{n};y}(\boldsymbol{z};q):=
\frac{\Phi_{\boldsymbol{m}}(\boldsymbol{w};q)}
{\prod_{1\leq i<j\leq r}(z_i\cdots z_{j-1}q;q)_{y_{ij}}},
\]
where $m_i:=n_i-y_1-\dots-y_i$ and $w_i:=z_iq^{y_{i,i+1}}$
for $1\leq i\leq r-1$.
The problem then is to find a manifestly positive representation for
the rational function $g_{\boldsymbol{n};\tau}(q)$ defined by
\[
g_{\boldsymbol{n};\tau}(q):=
\sum_{y\in Q} \Phi_{\boldsymbol{n};ry}
(\underbrace{q,\dots,q}_{r-1 \text{ times}};q)
\prod_{1\leq i<j\leq r} \frac{1-q^{ry_{ij}+j-i}}{1-q^{j-i}}
\prod_{i=1}^r q^{r(r+\tau)\binom{y_i}{2}-\tau iy_i},
\]
where $\boldsymbol{n}\in\mathbb{N}_0^{r-1}$ and 
$\tau\in\{2-r,\dots,0,1\}$.
For general $r$ this is a very hard problem since 
\begin{align*}
&\Phi_{n_1,\dots,n_{i-1},0,n_{i+1},\dots,n_{r-1};ry}
(\underbrace{q,\dots,q}_{r-1 \text{ times}};q) \\
&\quad =
\Phi_{n_1,\dots,n_{i-1}}(\underbrace{q,\dots,q}_{i-1 \text{ times}};q)\,
\Phi_{n_{i+1},\dots,n_{r-1}}(\!\underbrace{q,\dots,q}_{r-i-1 \text{ times}}\!;q)
\prod_{i=j}^r \delta_{y_j,0},
\end{align*}
which implies that
\begin{align*}
&g_{n_1,\dots,n_{i-1},0,n_{i+1},\dots,n_{r-1};\tau}(q) \\[1mm]
&\quad =
\Phi_{n_1,\dots,n_{i-1}}(\underbrace{q,\dots,q}_{i-1 \text{ times}};q)\,
\Phi_{n_{i+1},\dots,n_{r-1}}(\underbrace{q,\dots,q}_{r-i-1 \text{ times}};q).
\end{align*}
For example, setting $m=0$ in \eqref{Eq_g} gives
$g_{n,0;\tau}(q)=1/(q,q^2;q)_n=\Phi_n(q;q)$.
Some properties of $g_{\boldsymbol{n};\tau}(q)$ are
easily deduced for general $r$.
From \eqref{Eq_Hua-limit} followed by \eqref{Eq_Macdonald} it
immediately follows that
\begin{equation}\label{Eq_glimit}
\lim_{n_1,\dots,n_{r-1}\to\infty} g_{\boldsymbol{n};\tau}(q)
=\begin{cases}
\displaystyle \prod_{1\leq i<j\leq r} \frac{1}{(q^{j-i};q)_{\infty}}
& \text{if $\tau=1$}, \\[5mm]
\displaystyle \frac{(q;q)_{\infty}}{(q^r;q^r)_{\infty}}
\prod_{1\leq i<j\leq r} \frac{1}{(q^{j-i};q)_{\infty}}
& \text{if $\tau=0$}, \\[3mm]
0 & \text{if $\tau\in\{2-r,\dots,-1\}$}. 
\end{cases}
\end{equation}
We can do slightly better for special values of $\tau$.
First we note that by \cite[Equation (6.3)]{W06}\footnote{This result
is stated in \cite{W06} without proof.} it follows that for $r\geq 3$
\begin{align*}
\lim_{n_2,\dots,n_{r-2}\to\infty}
\Phi_{\boldsymbol{n};y}(\boldsymbol{z};q)&=
\frac{1}{(q;q)_{\infty}^{r-3}}\prod_{2\leq i<j\leq r-1}
\frac{1}{(z_i\cdots z_{j-1}q;q)_{\infty}} \\
&\quad\times
\frac{(z_1\cdots z_{r-1}q;q)_{n_1+n_{r-1}}}
{\prod_{i=1}^r (z_1\cdots z_{i-1}q;q)_{n_1-y_i}
(z_i\cdots z_{r-1}q;q)_{n_{r-1}+y_i}},
\end{align*}
generalising \eqref{Eq_Hua-limit}.
Hence, for such $r$,
\begin{align*}
g_{n,m;\tau}^{(r)}(q)&:=
\lim_{n_2,\dots,n_{r-2}\to\infty}
g_{(n,n_2,\dots,n_{r-2},m);\tau}(q) \\
&\hphantom{:}=
(q;q)_{\infty}
\prod_{1\leq i<j\leq r-1} \frac{1}{(q^{j-i};q)_{\infty}}
\prod_{i=1}^{r-1} \frac{1}{(q^{r-i};q)_{n+m+i}} \\
&\quad \times
\sum_{y\in Q}
\prod_{1\leq i<j\leq r} (1-q^{ry_{ij}+j-i})
\prod_{i=1}^r q^{r(r+\tau)\binom{y_i}{2}-\tau iy_i}
\qbin{n+m+r-1}{n-ry_i+i-1}.
\end{align*}
By \eqref{Eq_KG}, \eqref{Eq_tau-een} and $q\mapsto 1/q$ duality
this may be expressed in closed form for $\tau\in\{-1,0,1\}$ as
\begin{align*}
g_{n,m;\tau}^{(r)}(q)&=
q^{\binom{\tau}{2}(r-1)nm}
\qbin{n+m}{n}_p 
(q;q)_{\infty} 
\prod_{1\leq i<j\leq r-1} \frac{1}{(q^{j-i};q)_{\infty}}
\prod_{i=1}^{r-1} \frac{1}{(q^{r-i};q)_{n+m}},
\end{align*}
where $p=q$ if $\tau\in\{-1,1\}$ and $p=q^r$ if $\tau=0$.
For $r=3$ this is \eqref{Eq_seed}, and in the
limit of large $n$ and $m$ this gives \eqref{Eq_glimit}
for $\tau\in\{-1,0,1\}$.

\appendix

\section{New proof of \eqref{Eq_page692}}

We begin with the following $q$-Pfaff--Saalsch\"utz summation
for the root system $\mathrm{A}_{r-1}$:
\begin{align}\label{Eq_PS}
\sum_{y\in\mathbb{N}_0^r} &
\bigg( (b,q^{-N};q)_{\abs{y}}
\prod_{1\leq i<j\leq r} \frac{x_iq^{y_i}-x_jq^{y_j}}{x_i-x_j}
\prod_{i,j=1}^r \frac{(a_jx_i/x_j;q)_{y_i}}{(qx_i/x_j;q)_{y_i}} \\
& \times
\prod_{i=1}^r \frac{(bq^{1-N}/cx_i;q)_{\abs{y}-y_i} q^{y_i}}
{(a_ibq^{1-N}/cx_i;q)_{\abs{y}}(cx_i;q)_{y_i}}\bigg)
=\prod_{i=1}^r \frac{(cx_i/a_i,cx_i/b;q)_N}{(cx_i,cx_i/a_ib;q)_N}, 
\notag
\end{align}
where $N$ is a nonnegative integer and $\abs{y}:=y_1+\dots+y_r$.
It should be noted that the summand vanishes unless $\abs{y}\leq N$
so that only finitely many terms contribute to the sum.
The result \eqref{Eq_PS} was first obtained in the appendix of a
preliminary version of Leininger and Milne's paper~\cite{LM99}; 
an appendix that was dropped in the published version.
Subsequently \eqref{Eq_PS} was rederived and published by Bhatnagar
and Schlosser, see \cite[Remark~5.11]{BS98}.

To obtain \eqref{Eq_page692}, we replace $q\mapsto q^r$ in \eqref{Eq_PS}
and then specialise $x_i=q^{r-i}bz/c$ and $a_i=q^{-n}$ for $n$ a nonnegative
integer. 
Using $\prod_{i=1}^r (aq^{r-i};q)_k=(a;q)_{rk}$, this gives
\[
\prod_{i,j=1}^r \frac{(a_jx_i/x_j;q)_{y_i}}{(qx_i/x_j;q)_{y_i}}
\mapsto \prod_{i=1}^r \frac{(q^{-n-i+1};q)_{ry_i}}{(q^{r-i+1};q)_{ry_i}},
\]
so that the resulting summand vanishes unless $0\leq ry_i\leq n+i-1$.
Since this is independent of $N$, $q^{-rN}$ may be replaced by the
indeterminate $d$, resulting in
\begin{align*}
\sum_{y\in\mathbb{N}_0^r}&
\frac{(b,d;q^r)_{\abs{y}}}{(dq^{1-n}/z;q)_{r\abs{y}}}
\prod_{1\leq i<j\leq r} \frac{1-q^{ry_{ij}+j-i}}{1-q^{j-i}} 
\prod_{i=1}^r \frac{(q^{-n-i+1};q)_{ry_i}
(dq^i/z;q^r)_{\abs{y}-y_i} q^{riy_i}}
{(q^{r-i+1};q)_{ry_i}(bzq^{r-i};q^r)_{y_i}} \\ 
&=\frac{(z,bz/d;q)_n}{(bz,z/d;q)_n},
\end{align*}
where the reader is reminded that $y_{ij}:=y_i-y_j$.
Indeed, after multiplying the above identity by $d^n (z/d;q)_n$
and carrying out some standard simplifications of the $q$-shifted
factorials involving $d$, it follows that both sides are polynomials
in $d$ of degree $n$.
Since the difference between the right- and left-hand side is zero
for $d=q^{-rN}$ where $N$ is an arbitrary nonnegative integer, this
difference is zero for all $d$.
Next, if we set $b=0$, let $d$ tend to infinity and carry out some
elementary manipulations, we find
\begin{align}\label{Eq_before-rotation}
\sum_{y\in\mathbb{N}_0^r}& 
\big((-1)^rz\big)^{\abs{y}} q^{-r\binom{\abs{y}}{2}}
\prod_{1\leq i<j\leq r} (1-q^{ry_{ij}+j-i}) 
\prod_{i=1}^r q^{\binom{r+1}{2}y_i^2-iy_i} 
\qbin{n+r-1}{n-ry_i+i-1} \\ 
&=(z;q)_n \prod_{i=1}^{r-1} (1-q^{n+i})^i. \notag
\end{align}
We now consider the sum over the $y_i$ for fixed $\abs{y}=m$ and
carry out what in \cite{GK97} is referred to as the rotation trick.
That is, if $u,v$ are the unique integers such that $m=ur+v$ for
$0\leq v<r$, $u\geq 0$, then we shift and rotate the summation indices
$y_1,\dots,y_r$ as
\[
y_i\mapsto 
\begin{cases} 
y_{i+v}+u & \text{for $1\leq i\leq r-v$}, \\
y_{i+v-r}+u+1 & \text{for $r-v<i\leq r$}.
\end{cases}
\]
This substitution leads to the following alternative expression for the
left-hand side of \eqref{Eq_before-rotation}:
\[
\sum_{m=0}^n \sum_{y\in Q}
(-z)^m q^{\binom{m}{2}} \prod_{1\leq i<j\leq r} (1-q^{ry_{ij}+j-i})
\prod_{i=1}^r q^{\binom{r+1}{2}y_i^2-iy_i} \qbin{n+r-1}{n-m-ry_i+i-1}.
\]
Equating coefficients of $z^m$ with the right-hand side of 
\eqref{Eq_before-rotation} using the $q$-binomial theorem
\begin{equation}\label{Eq_qbt}
(z;q)_n=\sum_{m=0}^n (-z)^m q^{\binom{m}{2}}\qbin{n}{m},
\end{equation}
this implies
\begin{equation}\label{Eq_tau-een}
\sum_{y\in Q}
\prod_{1\leq i<j\leq r} (1-q^{ry_{ij}+j-i})
\prod_{i=1}^r q^{\binom{r+1}{2}y_i^2-iy_i}
\qbin{n+r-1}{n-m-ry_i+i-1}
=\qbin{n}{m} \prod_{i=1}^{r-1} (1-q^{n+i})^i.
\end{equation}
Finally, replacing $n$ by $n+m$ and specialising $r=3$ yields
\eqref{Eq_page692}.

\begin{ack}
I would like to thank Shashank Kanade for many helpful discussions,
Michael Schlosser for sending me to the preprint version of the paper
\cite{LM99} by Leininger and Milne which contains the key identity
\eqref{Eq_PS}, and Shunsuke Tsuchioka for sharing his
conjecture~\eqref{Eq_ST-conjecture}.
Thanks are also due to the anonymous referee for their careful
reading of the paper and for their helpful suggestions to improve
the presentation.
\end{ack}

\begin{funding}
Work supported by the Australian Research Council Discovery Project
DP200102316.
\end{funding}

\end{document}